\setlist{itemsep=1pt,parsep=0pt,topsep=2pt,partopsep=0pt}  
\def\itm#1{\rm ({#1})} 
\def\itmit#1{\itm{\it #1\,}}
\def\irom{\itmit{\roman{*}}} 
\def\iabc{\itmit{\alph{*}}} 
\def\endofFact{\hfill\scalebox{.6}{$\Box$}}
\let\subset\subseteq  
\let\eps\varepsilon 
\let\rho\varrho 
\def\dcup{\dot\cup}  
\def\bound{\partial} 
\def\subsc#1{\textsc{\MakeTextLowercase{#1}}} % needs package textcase
\newtheorem{theorem}{Theorem}%[section] 
\newtheorem{lemma}[theorem] {Lemma}    
\newtheorem{conjecture}[theorem] {Conjecture}   
\newtheorem{proposition}[theorem] {Proposition}   
\newtheorem{fact}[theorem]{Fact}
\theoremstyle{remark}
\newcommand{\By}[2]{\overset{\mbox{\tiny{#1}}}{#2}} 
\newcommand{\ByRef}[2]{   \By{\eqref{#1}}{#2} }
\newcommand{\gBy}[1]{     \By{#1}{>} } 
\newcommand{\leBy}[1]{    \By{#1}{\le} } 
\newcommand{\geBy}[1]{    \By{#1}{\ge} } 
\newcommand{\eqByRef}[1]{ \ByRef{#1}{=} } 
\newcommand{\gByRef}[1]{  \ByRef{#1}{>} } 
\newcommand{\leByRef}[1]{ \ByRef{#1}{\le} } 
\newcommand{\geByRef}[1]{ \ByRef{#1}{\ge} }
\newcommand{\NATS}{\mathbb{N}}
\DeclareMathOperator{\intr}{int} % interior 
\DeclareMathOperator{\CTF}{CTF} % connected triangle factor 
\DeclareMathOperator{\sqp}{sp} % square-path 
\DeclareMathOperator{\sqc}{sc} % square-cycle 
\newcommand{\rp}{r_p} % r-path 
\newcommand{\rc}{r_c} % r-cycle 
\newcommand{\overrightharp}[1]{{\overset{\scalebox{1.8}[.7]{\!\!\!\raisebox{-.5pt}{$\rightharpoonup$}}}{\smash{#1}\vphantom{a}}}}
\newcommand{\EMAIL}[1]{  \textit{E-mail}: \texttt{#1} } 
\newcommand{\oldqed}{}
\newenvironment{factproof}[1][Proof]{
  \renewcommand{\oldqed}{\qedsymbol}
  \renewcommand{\qedsymbol}{\endofFact}
  \begin{proof}[#1]
}{
  \end{proof}
  \renewcommand{\qedsymbol}{\oldqed}
}
\title[Between Tur\'an's theorem and P\'osa's conjecture]{Filling the gap
  between Tur\'an's theorem and P\'osa's conjecture}
  \author[Peter Allen]{Peter Allen*}
  \thanks{
    *
    DIMAP and Mathematics Institute, University of
    Warwick, Coventry, CV4 7AL, United Kingdom.
    \EMAIL{P.D.Allen@warwick.ac.uk}
  }
  \author[Julia B\"ottcher]{Julia B\"ottcher\dag}
  \thanks{
    \dag
    Zentrum Mathematik, Technische Universit\"at M\"unchen,
    Boltzmannstra\ss{}e~3, D-85747 Garching bei
    M\"unchen, Germany. 
    \EMAIL{boettche@ma.tum.de}
  }
  \author[Jan Hladk\'y]{Jan Hladk\'y\ddag}
  \thanks{
    \ddag
    Department of Applied Mathematics, Faculty of Mathematics and Physics, Charles University, Malostransk\'e n\'am\v est\'i
    25, 118~00, Prague, Czech Republic and
    DIMAP and Department of Computer Science, University of
    Warwick, Coventry, CV4 7AL, United Kingdom.
    \EMAIL{honzahladky@gmail.com}
  }
  \thanks{PA was partially supported by DIMAP, EPSRC award EP/D063191/1, 
    JB by DFG grant TA 309/2-1,
    JH by the Charles University grant GAUK 202-10/258009, by
    DAAD, by BAYHOST, and by DIMAP, EPSRC award EP/D063191/1.
  } 
\date{\today} 
\begin{document} 
 
\begin{abstract} 
  Much of extremal graph theory has concentrated either on finding very small
  subgraphs of a large graph (Tur\'an-type results) or on finding spanning
  subgraphs (Dirac-type results).  In this paper we are interested in finding
  intermediate-sized subgraphs.
  We investigate minimum degree conditions under which a
  graph~$G$ contains squared paths and squared cycles of arbitrary specified lengths.  
  We determine precise thresholds, assuming that the order of~$G$ is large. 
  This extends results of Fan and Kierstead [J.\ Combin.\ Theory Ser.\ B 63 
  (1995), 55--64] and of Koml\'os, Sark\"ozy, and Szemer\'edi [Random 
  Structures Algorithms 9 (1996), 193--211] concerning the containment of a 
  spanning squared path and a spanning squared cycle, respectively. 
  Our results show that such minimum degree conditions constitute not merely
  an interpolation between the corresponding Tur\'an-type and Dirac-type
  results, but exhibit other interesting phenomena.
\end{abstract}  

\maketitle

\section{Introduction} \label{sec:Intro} 
One of the main programmes of extremal graph theory is the study of conditions 
on the vertex degrees of a host graph~$G$ under which a target  graph~$H$ 
appears as a subgraph of~$G$ (which we denote by~$H\subset G$). Tur\'an's 
theorem~\cite{Tur} is a prominent example for results of this type. It asserts 
that an average degree $d(G)>\frac{r-2}{r-1}n$ forces the copy of a complete 
graph~$K_r$ in~$G$ (and that this is best possible), 
where here and throughout~$n$ is the number of vertices in the host graph~$G$. 
% 
% \begin{theorem}\label{thm:Turan3} 
% Suppose that $G$ is a graph of order $n$ with $\delta(G)>n/2$. Then $G$ 
% contains a copy of $K_3$. 
% \end{theorem} 
% 
More generally, the celebrated theorem of Erd\H os and Stone~\cite{ErdSto}
implies that for a \emph{fixed} graph~$H$ the chromatic number~$\chi(H)$ of~$H$
determines the average degree that is necessary to guarantee a copy of~$H$: If
$H$ has chromatic number $\chi(H)=r$ and $d(G)\ge(\frac{r-2}{r-1}+o(1))n$,
then~$H$ is a subgraph of~$G$.  This settles the problem for fixed target graphs
(with chromatic number at least~$3$), that is, graphs that are `small' compared
to the host graph.
 
% Both extrema of the possible lengths of $H$ have been 
% investigated. The smallest possible instance of the problem, the square of a path 
% of length one is not interesting. Since $P_3^2=C_3^2=K_3$, the cases of the 
% square of a path of length two and of the square of a cycle of length three are 
% covered by Tur\'an's Theorem. 
 
\smallskip
 
% On the other end of the spectrum, there is a square of a spanning path and a 
% square of a spanning cycle.  
%  
Dirac's theorem~\cite{Dir}, another classical result from the area, considers 
target graphs that are of the same order as the host graph, i.e., so-called 
\emph{spanning} target graphs.  Clearly, any average degree condition on the 
host graph that enforces a connected spanning subgraph must be trivial, and 
hence the average degree needs a suitable replacement in this setting. 
Here, the minimum degree is a natural candidate, and indeed, Dirac's theorem 
asserts that every graph~$G$ with minimum degree~$\delta(G)>\frac12 n$ has a 
Hamilton cycle. This implies in particular that~$G$ has a matching covering 
$2\lfloor n/2\rfloor$ vertices.  
 
A $3$-chromatic version of this matching result follows from a theorem by Corr\'adi
and Hajnal~\cite{CorHaj}: the minimum degree condition
$\delta(G)\ge2\lfloor n/3 \rfloor$ implies the existence of a so-called
\emph{spanning triangle factor} in~$G$, that is, a collection of $\lfloor
n/3\rfloor$ vertex disjoint triangles. A well-known conjecture of P\'osa
(see, e.g., \cite{ErdPos}) asserts that roughly the same minimum degree
actually guarantees the existence of a connected super-graph of a spanning
triangle factor. It states that any graph~$G$ with $\delta(G)\ge \frac23 n$
contains a spanning \emph{squared cycle}~$C^2_n$, where the square of a graph,~$F^2$, is obtained
from~$F$ by adding edges between all pairs of vertices with
distance~$2$ in $F$. This can be seen as a $3$-chromatic analogue of Dirac's
theorem, which turned out to be much more difficult than its $2$-chromatic
cousin.
% The case of a square of a spanning cycle is a subject of a famous conjecture of 
% P\'osa~\cite{ErdPos}. 
%  
% \begin{conjecture}\label{conj:Posa} 
% Suppose that $G$ is a graph of order $n$ with $\delta(G)\ge 2n/3$. Then $G$ 
% contains a copy of $C_n^2$. 
% \end{conjecture} 
 
Fan and Kierstead~\cite{FanKie95} proved an approximate version of P\'osa's 
conjecture for large~$n$. In addition they determined a sufficient and best 
possible minimum degree condition for the case that the squared cycle in 
P\'osa's conjecture is replaced by a \emph{squared path} $P^2_n$, i.e., the 
square of a spanning path~$P_n$. 
 
\begin{theorem}[Fan \& Kierstead~\cite{FanKie96}]\label{thm:FanKierstead} 
  If $G$ is a  graph on $n$ vertices with minimum degree $\delta(G)\ge (2n-1)/3$, 
  then $G$ contains a spanning squared path $P_n^2$. 
\end{theorem}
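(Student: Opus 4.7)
The plan is to argue by contradiction: take a longest squared path $P=v_1v_2\ldots v_k$ in $G$, suppose $k<n$, and produce a longer squared path. Write $W=V(G)\setminus V(P)$. The first observation is that the trailing edge $v_{k-1}v_k$ has at least $2\delta(G)-(n-2)\ge (n+2)/3$ common neighbours in $V(G)$. If any such common neighbour lies in $W$, appending it to $P$ yields a longer squared path, contradicting maximality. Hence every common neighbour of $v_{k-1}$ and $v_k$ lies on $P$, and by symmetry the same holds for the initial edge $v_1v_2$. In particular $k\ge (n+2)/3+2$, so $P$ is already rather long.

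Next I would develop an analogue of P\'osa rotations adapted to squared paths. The key operation is the following: if $v_i\in N(v_{k-1})\cap N(v_k)$ with $3\le i\le k-2$, and additionally one of the secondary edges $v_{i-1}v_k$ or $v_{i+1}v_{k-1}$ is present, then reversing the segment $v_{i+1}\ldots v_k$ produces a new squared path $v_1\ldots v_i v_k v_{k-1}\ldots v_{i+1}$ of the same length whose trailing edge is $v_{i+2}v_{i+1}$. Applying the step~1 argument to this new trailing edge shows that its common neighbourhood is also contained in $V(P)$, so each rotation supplies new constraints on the structure of $G$. Iterating rotations I would show that a linear-sized set $S\subset V(P)$ can be realised as the second-to-last vertex of some longest squared path on the same vertex set, so that $N_G(s)\cap W$ is controlled for every $s\in S$.

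The final phase is an extremal-case analysis. The conjunction of the constraints produced in step~2 should force $G[V(P)]$ to be close to a balanced tripartite blow-up with parts of size $\approx n/3$, and to force the vertices of $W$ to attach to $V(P)$ only along one part. In this near-extremal configuration I would build a spanning squared path explicitly: apply a Corr\'adi--Hajnal-type argument to obtain an almost-perfect triangle factor in a suitable auxiliary graph, and then glue the triangles into a single squared path by using the large common neighbourhoods guaranteed by $\delta(G)\ge(2n-1)/3$ at each junction. The tightness of the bound $(2n-1)/3$ is witnessed by a graph obtained from two overlapping cliques of size roughly $(2n-1)/3$ sharing a small common part, which prevents the rotations above from ever escaping $V(P)$.

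The main obstacle I expect is step~2: rotations for squared paths are considerably more delicate than for Hamilton paths because every operation must preserve both the primary edges $v_iv_{i+1}$ \emph{and} the secondary edges $v_iv_{i+2}$. Designing a repertoire of rotations rich enough to reach $\Omega(n)$ alternative endpoints, while still being feasible under the sharp minimum degree $(2n-1)/3$ rather than some larger constant, is the crux of the argument; the extremal analysis in step~3 is essentially forced by whatever structural conclusions can be extracted from step~2.
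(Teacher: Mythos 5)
This theorem is cited from Fan and Kierstead~\cite{FanKie96}; the paper does not give a proof of it, so there is no in-paper argument to compare your sketch against. Evaluated on its own terms, your proposal is a plan rather than a proof, and the parts that would carry the weight are exactly the parts you leave open.

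The first step is sound in spirit (and the arithmetic can be fixed: the common neighbourhood of the trailing edge has size at least $2\delta-n\ge(n-2)/3$, so if $P$ is a longest squared path then every such common neighbour lies on $P$). But step two is where the argument has to do real work and currently does none. Your rotation moves $v_1\ldots v_iv_kv_{k-1}\ldots v_{i+1}$ needs, on top of $v_i\in\Gamma(v_{k-1})\cap\Gamma(v_k)$, the additional secondary edge $v_{i-1}v_k$ (not ``one of $v_{i-1}v_k$ or $v_{i+1}v_{k-1}$'' --- $v_{i+1}v_{k-1}$ plays no role in the reversed segment). So each admissible pivot $i$ must satisfy three edge constraints involving the current endpoints, and under the sharp bound $\delta\ge(2n-1)/3$ you cannot simply intersect three degree sets and get a positive proportion of $V(P)$: the sets $\Gamma(v_{k-1})\cap\Gamma(v_k)$ and $\{i:v_{i-1}\in\Gamma(v_k)\}$ each have density only about $1/3$ on $P$, and nothing forces them to overlap. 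Moreover, after one rotation the new trailing edge is $v_{i+2}v_{i+1}$, and you would need the same structure to persist to chain rotations; you assert that iterating reaches $\Omega(n)$ endpoints but give no mechanism. This is precisely where the P\'osa analogy breaks down, as you yourself flag, and nothing in the sketch resolves it.

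Step three is likewise asserted rather than derived: no argument is given that the constraints force $G[V(P)]$ to be close to a balanced tripartite blow-up, nor that the remaining vertices attach only along one part; and the extremal configuration you describe (``two overlapping cliques'') is not the right witness --- the tight construction is closer to two disjoint cliques of size about $(n+2)/3$ each, together with an independent set of about $(n-2)/3$ vertices joined completely to both cliques, which blocks a squared path from ever crossing between the cliques. In short, the proposal identifies the correct general shape (longest path, rotation, extremal analysis) but does not supply the rotation repertoire, the endpoint-accumulation argument, or the structural classification that would make it a proof; these are exactly the nontrivial contents of Fan and Kierstead's paper, which is considerably more intricate than a P\'osa-style scheme.
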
 
 
The P\'osa Conjecture was verified for large values of $n$ by Koml\'os, 
Sark\"ozy, and Szemer\'edi~\cite{KSS_pos}. The proof in~\cite{KSS_pos} actually 
asserts the following stronger result, which guarantees not only spanning 
squared cycles but additionally squared cycles of all lengths between~$3$ 
and~$n$ that are divisible by~$3$. 
 
\begin{theorem}[Koml\'os, S\'ark\"ozy \& Szemer\'edi~\cite{KSS_pos}] 
\label{thm:KSSPosa} 
  There exists an integer $n_0$ such that for all integers $n>n_0$  
  any graph $G$ of order $n$ and minimum degree $\delta(G)\ge\frac23n$ 
  contains all squared cycles $C^2_{3\ell}\subset G$ with $3\le 3\ell\le n$.  
  If furthermore $K_4\subset G$, then $C^2_{\ell}\subset G$ for any 
  $3\le\ell\le n$ with $\ell\neq 5$. 
\end{theorem}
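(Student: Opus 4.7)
The plan follows the Regularity--Blow-up--Absorption framework that is standard for embedding large or spanning structures. First I apply the Szemer\'edi Regularity Lemma to~$G$ with suitable parameters $\eps \ll d \ll 1$ to obtain an $\eps$-regular partition; the reduced graph~$R$ then satisfies $\delta(R) \ge (\tfrac23 - o(1))|R|$. By the Corr\'adi--Hajnal theorem, $R$ contains a triangle factor, and each such triangle corresponds (after minor refinement) to a super-regular triple in~$G$ into which we can embed squared paths of essentially any length via the Blow-up Lemma. Successive triangles in~$R$ joined by regular connecting edges give long squared paths in~$G$ whose length can be tuned by the number of triangles we use from the factor.

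For the first conclusion (all $\ell$ with $3\mid 3\ell$) I split by the size of~$3\ell$. When $3\ell$ is bounded, the fact that $\chi(C^2_{3\ell})=3$ together with Erd\H os--Stone (or a direct Tur\'an-type embedding) yields the copy immediately, since $\delta(G) \ge \tfrac23 n$ comfortably exceeds the required density. When $3\ell$ is close to~$n$ we are in the spanning regime of P\'osa's conjecture itself: build a short absorbing squared path~$P_{\mathrm{abs}}$, cover almost all remaining vertices by a long squared path using the triangle-factor structure, feed the leftover into~$P_{\mathrm{abs}}$, and close the path into a squared cycle. For intermediate~$3\ell$, select a random subset $V' \subset V(G)$ of size~$3\ell$; standard concentration gives $\delta(G[V']) \ge (\tfrac23 - o(1))|V'|$ with high probability, so the same spanning-regime argument applies inside~$V'$.

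For the second conclusion, the obstacle is that $\chi(C^2_\ell) = 4$ whenever $3\nmid\ell$, so the minimum degree bound alone cannot force such cycles: $\delta(G)\ge\tfrac23 n$ need not produce even a single~$K_4$ (explaining the hypothesis) and certainly cannot produce $K_5 = C^2_5$ (explaining the exception). The given $K_4$ serves as a \emph{length adjuster}: if $x_1,x_2,x_3,x_4$ span a~$K_4$, then traversing $x_1,x_2,x_3$ versus $x_1,x_2,x_4,x_3$ (say) alters the overall cycle length by~$1$ while preserving the squared-cycle property, because the completeness of the~$K_4$ guarantees that all freshly created adjacent and second-adjacent pairs are still edges. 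Thus the scheme is to first produce a squared cycle of length~$3k$ via the first-conclusion method (choosing~$3k$ suitably close to~$\ell$) with the~$K_4$ incorporated at a prescribed location, and then perform one or two such local modifications at the~$K_4$ to obtain $C^2_{3k+1}$ or $C^2_{3k+2}$ as required.

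The main obstacle will be the absorber: it must be robust enough to swallow any leftover vertex set regardless of the exact target length, and simultaneously compatible with the $K_4$-based length shift. Two delicate bookkeeping points are (i)~making the random-subset reduction for intermediate lengths rigorous, showing that the resulting subgraph still admits the full absorption argument, and (ii)~arranging the embedded~$K_4$ to sit on the squared cycle in the particular local configuration that permits the one- or two-step length adjustment, without interfering with the triangle-factor skeleton supplied by the Regularity Lemma.
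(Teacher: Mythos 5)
The paper does not supply a proof of Theorem~\ref{thm:KSSPosa}; it is quoted from Koml\'os, S\'ark\"ozy and Szemer\'edi~\cite{KSS_pos} and used as a black box. Your sketch follows the same broad Regularity--Blow-up--Absorption framework that the original proof uses, but two of its steps contain genuine gaps.

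First, the random-subset reduction for intermediate lengths does not go through. If $V'\subset V(G)$ is a uniformly random set of size $3\ell$, concentration gives only $\delta(G[V'])\ge\tfrac23|V'|-\Theta(\sqrt{\ell\log n})$, an additive loss that is $o(|V'|)$ but far larger than a constant. The spanning result you want to invoke inside~$V'$ is tight to within an additive constant: the complete tripartite graph with parts of sizes $\lceil|V'|/3\rceil+1$, $\lceil|V'|/3\rceil+1$ and $|V'|-2\lceil|V'|/3\rceil-2$ has minimum degree $\tfrac23|V'|-1$ and no spanning triangle factor, hence no spanning squared cycle. So the loss incurred by random sampling is fatal, and one cannot simply ``apply the same spanning-regime argument'' to $G[V']$. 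The route that actually works (and is precisely what the paper's Lemma~\ref{lem:bl} accomplishes) is to produce $C^2_{3\ell}$ for every intermediate $\ell$ directly from the Blow-up Lemma applied to a triangle factor of the reduced graph, reserving absorption only for lengths in $[(1-o(1))n,n]$.

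Second, the vertex-level $K_4$ length adjuster is incorrect as stated. Suppose the squared cycle reads $\ldots,a,b,x_1,x_2,x_3,c,d,\ldots$ and you replace $x_1,x_2,x_3$ by $x_1,x_2,x_4,x_3$. In the new cycle $x_4$ and $c$ are at distance two, so the edge $x_4c$ is required for the squared-cycle property, and the completeness of $\{x_1,x_2,x_3,x_4\}$ says nothing about whether $x_4$ is adjacent to~$c$. The same obstruction appears if you delete a vertex of the $K_4$ instead. The parity correction has to be carried out at the cluster level, as in the proof of Lemma~\ref{lem:bl} in the appendix: if $ABCD$ is a $K_4$ in the reduced graph, then the cluster sequences $ABC$, $ABCDABC$ and $ABCDABCDABC$ each start and end in the same ordered pair and have lengths $\equiv 0,1,2\pmod 3$, and it is \emph{regularity} along that sequence --- not the $K_4$ alone --- that supplies every required vertex-level adjacency, including the analogue of $x_4c$. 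Replacing your vertex-level swap with that cluster-level device and replacing the random-subset reduction by a Blow-up Lemma argument would bring your sketch in line with a correct proof.
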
 
 
% When $\ell$ is an integer not divisible by three, $C^2_\ell$ does not have 
% chromatic number three: $C^2_5=K_5$ has chromatic number five, while for all 
% other $\ell$ not divisible by three, $\chi(C^2_\ell)=4$. It follows that we 
% cannot hope to find $C^2_5$ in $G$ until $\delta(G)>n-\lceil\frac{n}{4}\rceil$ 
% (when it is guaranteed by Tur\'an's theorem) or $C^2_\ell$ in $G$ for any other 
% $3\nmid\ell$ until $\delta(G)>n-\lceil\frac{n}{3}\rceil$ (when it is guaranteed 
% by Theorem~\ref{thm:KSSPosa}). We are thus interested in $C^2_{3\ell}$. 
 
For squared cycles $C^2_{\ell}$ with~$\ell$ not divisible by~$3$ the additional 
condition $K_4\subset G$ is necessary because these target graphs are not 
$3$-colourable and hence a complete $3$-partite graph shows that one cannot 
hope to force~$C^2_{\ell}$ unless $\delta(G)\ge (2n+1)/3$. If $\delta(G)\ge 
(2n+1)/3$, on the other hand, then Tur\'an's Theorem asserts that~$G$ contains 
a copy of~$K_4$ and hence Theorem~\ref{thm:KSSPosa} implies $C^2_{\ell}\subset 
G$ for any $3\le\ell\le n$ with $l\neq 5$. The case $\ell=5$ has to be 
excluded because $C^2_5$ is the $5$-chromatic~$K_5$. 
 
% $C^2_5$ in $G$ unless $\delta(G)>n-\lceil\frac{n}{4}\rceil$ 
% (when it is guaranteed by Tur\'an's theorem) or $C^2_\ell$ in $G$ for any other 
% $3\nmid\ell$ until $\delta(G)>n-\lceil\frac{n}{3}\rceil$ (when it is guaranteed 
% by Theorem~\ref{thm:KSSPosa}). 
 
\smallskip
% The current paper addresses the same question in the case when 
% $H$ is a square\footnote{The square $F^2$ of a graph $F=(V,E)$ is a graph on the 
% vertex set $V$ with two vertices adjacent if and only if they have distance at 
% most two in $F$.} of a path or of a cycle, whose length may be a function of the 
% order of the graph $G$. We write $P_\ell$ and $C_\ell$ for a path or cycle on 
% $\ell$ vertices.  
 
In this paper we address the question of what happens between these two extrema 
of target graphs with constant order and order~$n$. We are 
interested in essentially best possible minimum degree conditions that enforce subgraphs 
covering a certain percentage of the host graph. 
 
Let us start with a simple example. It is easy to see that every graph~$G$ with 
minimum degree $\delta(G)\ge\delta$ for $0\le\delta\le\frac12n$ has a matching 
covering at least $2\delta$ vertices (see
Proposition~\ref{prop:match}\ref{prop:match:a}). This gives a linear dependence
between the forced size of a matching in the host graph and its minimum
degree. A more general form of the result of Corr\'adi and Hajnal~\cite{CorHaj} 
mentioned earlier is a variant of this linear dependence for triangle factors. 
 
% We note that P\'osa's conjecture is a 
% generalisation of a packing result of Corr\'adi and Hajnal~\cite{CorHaj}: 
 
\begin{theorem}[Corr\'adi \& Hajnal~\cite{CorHaj}] 
\label{thm:CorHaj} 
  Let $G$ be a graph on $n$ vertices with minimum degree 
  $\delta(G)=\delta\in[\frac12n,\frac23n]$. Then $G$ contains $2\delta-n$ vertex 
  disjoint triangles. 
\end{theorem}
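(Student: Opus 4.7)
The plan is to reduce the statement to the classical Corr\'adi--Hajnal triangle-factor theorem (the spanning version: $\delta(G)\ge\tfrac23 |V(G)|$ with $3\mid|V(G)|$ forces a $K_3$-factor) by attaching a suitably chosen set of universal vertices to~$G$.

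Set $k:=2\delta-n$ for the target number of triangles, and let $s:=2n-3\delta$, which is a non-negative integer because $\delta\le 2n/3$. The first step is to form an auxiliary graph~$G'$ by adding $s$ new vertices to~$G$, making them into a clique, and joining each new vertex to every vertex of $V(G)$. Then $|V(G')|=n+s=3(n-\delta)$ is divisible by~$3$. I would next verify that $\delta(G')\ge 2(n-\delta)=\tfrac23|V(G')|$: every original vertex gains the $s$ new universal neighbours and so has degree at least $\delta+s=2(n-\delta)$, while every new vertex has degree $n+s-1=3(n-\delta)-1$, which exceeds the threshold as long as $n>\delta$ (which holds throughout the range). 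Note that the choice of $s$ is calibrated so that the minimum-degree condition is met with equality at the old vertices, and this is where the hypothesis $\delta\le 2n/3$ is used.

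The classical Corr\'adi--Hajnal theorem then produces a $K_3$-factor~$\mathcal{F}$ of~$G'$, i.e.\ $n-\delta$ pairwise vertex-disjoint triangles whose union is $V(G')$. The second step is a pigeonhole count. Writing $t_j$ for the number of triangles of $\mathcal{F}$ containing exactly $j$ vertices of the added clique, I would use
\[
  t_0+t_1+t_2+t_3=n-\delta \quad\text{and}\quad t_1+2t_2+3t_3=s=2n-3\delta
\]
to conclude that
\[
  t_0 \ge (n-\delta)-(t_1+2t_2+3t_3)=(n-\delta)-(2n-3\delta)=2\delta-n=k.
\]
These $t_0$ triangles lie entirely in $V(G)$ and, being vertex-disjoint in~$G'$, furnish the required $k$ vertex-disjoint triangles of~$G$.

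There is no serious obstacle: the whole argument is a one-step reduction to a black-boxed theorem plus a short pigeonhole, and the only thing to check carefully is the elementary minimum-degree computation for $G'$. The only mild care needed is the precise choice $s=2n-3\delta$, which is exactly what simultaneously forces $3\mid|V(G')|$, $\delta(G')\ge\tfrac23|V(G')|$, and $(n-\delta)-s = k$, so the same constant drives every step of the reduction.
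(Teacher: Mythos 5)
The paper does not prove this statement; it is an attributed citation to Corr\'adi and Hajnal, used as a black box later in the proof of Lemma~\ref{StabLem}. So there is no in-paper proof to compare against. Your argument, a reduction of the ``partial'' version to the classical spanning $K_3$-factor version of the Corr\'adi--Hajnal theorem, is correct as written: the arithmetic $|V(G')|=3(n-\delta)$, $\delta(G')\ge 2(n-\delta)$, and $t_0\ge(n-\delta)-s=2\delta-n$ all check out, and the lone estimate you elided ($t_1+t_2+t_3\le t_1+2t_2+3t_3$) is immediate. It is worth knowing, though, that the original Corr\'adi--Hajnal paper proves the statement in exactly the generality quoted here (``$n\ge 3k$ vertices and $\delta\ge 2k$ yield $k$ disjoint triangles''), so the paper is invoking the original result directly rather than the spanning corollary; your reduction is a clean way to recover the general form from the spanning form, but it is not needed if one has the original theorem in hand.
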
 
 
The main theorem of this paper is a  corresponding result mediating between 
Tur\'an's theorem and P\'osa's conjecture. More precisely, our aim is 
to provide exact minimum degree thresholds for the 
appearance of a squared path~$P_\ell^2$ and a squared cycle~$C_{\ell}^2$. 
 
% The bounds given in Theorems~\ref{thm:Turan3},~\ref{thm:FanKierstead} 
% and~\ref{thm:CorHaj} and Conjecture~\ref{conj:Posa} are tight. In each case, a 
% complete tripartite graph with one part smaller than the other two is 
% extremal. 
 
There are at least two reasonable guesses one might make as 
to what minimum degree~$\delta(G)=\delta$ will guarantee which 
length~$\ell=\ell(n,\delta)$ of squared path (or longest squared cycle). On the
one hand, the degree threshold for a \emph{spanning} squared path or cycle and for a 
spanning triangle factor are approximately the same. So perhaps this remains 
true for smaller~$\ell$: in light of Theorem~\ref{thm:CorHaj} one could expect 
that $\ell(n,\delta)$ were roughly $3(2\delta(G)-n)$. This turns out to be far 
too optimistic. 
 
On the other hand, proofs of preceding results dealing with spanning subgraphs 
essentially combine greedy techniques with local changes. They simply start to 
construct the desired subgraph in (almost) any location, and in the event of 
getting stuck change only a few of the vertices embedded so far; at no time do 
they scrap an entire half-constructed object and start anew. It would not be 
unreasonable to believe that this technique also leads to best possible 
minimum degree conditions for large but not spanning subgraphs.  
Clearly, in the case of (unsquared) paths such a greedy strategy  
provides a path of length~$\delta(G)+1$. As~$G$ might be disconnected, however, 
it cannot guarantee longer paths if $\delta(G)<n/2$.  
For squared paths the following construction shows that with an arbitrary 
starting location one cannot hope for squared paths on more than 
$\frac32(2\delta(G)-n)$ vertices: If~$G$ contains disjoint cliques~$C$ and~$C'$ of orders 
$2\delta-n$ and $n-\delta$, and an independent set~$I$ of order $n-\delta$ such that all vertices 
of~$C$ and $C'$ are connected to all vertices of~$I$ but not to other vertices of~$G$, then it 
is not difficult to see that the longest squared path in~$G$ starting in an edge 
of~$C$ has length~$\frac32(2\delta(G)-n)$. This could lead to the idea 
that~$\ell(n,\delta)$ were approximately $\frac32(2\delta(G)-n)$. It is 
true that there are squared paths of this length in~$G$---but this lower bound
is almost always excessively pessimistic. In other words, it turns out that
one has to carefully choose the `region' of~$G$ to look for the desired
squared path. Since spanning squared paths use all vertices of~$G$ this
problem does not occur for these subgraphs.
 
For fixed~$n$ both guesses propose a linear dependence between~$\delta$ and the 
length~$\ell(n,\delta)$ of a forced squared path (or cycle). As we will see below $\ell(n,\delta)$ 
as a function of $\delta$ behaves very differently: it is piece-wise linear 
but jumps at certain points. (These jumps can be viewed as phase transitions for the
appearance of squared paths or cycles.) To make this precise we introduce the following 
functions. Given two positive integers $n$ and $\delta$ with 
$\delta\in(\frac12n,n-1]$, we define $\rp(n,\delta)$ to be
the largest integer
$r$ such  that $n-\delta+\lfloor\delta/r\rfloor>\delta$
and $\rc(n,\delta)$ to be the largest integer $r$ such that $n-\delta+\lceil\delta/r\rceil>\delta$. We then 
define 
\begin{equation}\label{eq:sqpc}
\begin{split}
   \sqp(n,\delta) & := 
   \min\Big\{\, 
     \Big\lceil  
       \tfrac32\lceil\delta/\rp(n,\delta)\rceil+\tfrac12 
     \Big\rceil,\, 
     n\, 
   \Big\} \,, 
   \quad\text{and} \\
   \sqc(n,\delta) & := 
   \min\Big\{\, 
     \Big\lfloor 
       \tfrac32\lceil\delta/\rc(n,\delta)\rceil 
     \,\Big\rfloor,\, 
     n\, 
   \Big\}\,. 
\end{split}
\end{equation} 
Observe that $\sqc(n,\delta)\le \sqp(n,\delta)$ and that for almost every
$\alpha\in(0,1)$ we have
$\lim_{n\to\infty}\sqc(n,\alpha n)/n=\lim_{n\to\infty}\sqp(n,\alpha n)/n$. The
dependence between $\sqp(n,\delta)$ and~$\delta$ is illustrated in Figure~\ref{fig:sqpplot}. 
 
\begin{figure}[htbp] 
\begin{center} 
    \psfrag{gamma}{\hspace{7mm}$\delta$} 
    \psfrag{3gamma-1.5}{\hspace{-3mm}\scalebox{.9}{$\frac32(2\delta-n)$}} 
    \psfrag{4gamma-2}{\hspace{0mm}\scalebox{.9}{\ $4\delta-2n$}} 
    \psfrag{6gamma-3}{\hspace{0mm}\scalebox{.9}{\ $6\delta-3n$}} 
    \psfrag{sp(gamma)}{\hspace{0mm}\scalebox{.9}{$\sqp(n,\delta)$}} 
    \psfrag{ 1.2}{} 
    \psfrag{sp}{} 
    \psfrag{ 1}{\scalebox{.9}{$n$}} 
    \psfrag{ 0.8}{\scalebox{.9}{$\frac{4n}{5}$}} 
    \psfrag{ 0.6}{\scalebox{.9}{$\frac{3n}{5}$}} 
    \psfrag{ 0.4}{\scalebox{.9}{$\frac{2n}{5}$}} 
    \psfrag{ 0.2}{\scalebox{.9}{$\frac{n}{5}$}} 
    \psfrag{ 0}{\scalebox{.9}{$0$}} 
    \psfrag{ 0.5}{\scalebox{.9}{$\frac{n}{2}$}} 
    \psfrag{ 6/11}{\scalebox{.9}{$\frac{6n}{11}$}} 
    \psfrag{ 5/9}{\scalebox{.9}{$\frac{5n}{9}$}} 
    \psfrag{ 4/7}{\scalebox{.9}{$\frac{4n}{7}$}} 
    \psfrag{ 2/3}{\scalebox{.9}{$\frac{2n}{3}$}} 
   \includegraphics[height=8cm,width=12.5cm]{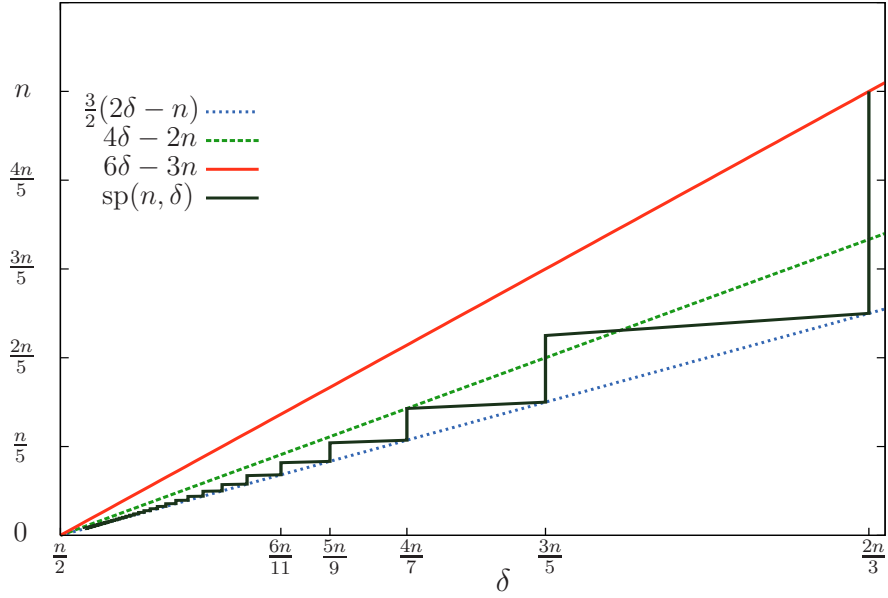} 
  \end{center} 
  \caption{The behaviour of $\sqp(n,\delta)$.} 
\label{fig:sqpplot} 
\end{figure} 
 
Our main theorem now states states that $\sqp(n,\delta)$ and $\sqc(n,\delta)$
are the maximal lengths of squared paths and cycles, respectively, forced in an
$n$-vertex graph~$G$ with minimum degree~$\delta$. More generally, and in
accordance with Theorem~\ref{thm:KSSPosa}, we show that~$G$ also contains any
shorter squared cycle with length divisible by~$3$ (see~\ref{thm:main:i} of
Theorem~\ref{thm:main}). We shall show below that these results are tight by
explicitly constructing extremal graphs~$G_p(n,\delta)$ and~$G_c(n,\delta)$ for
squared paths and cycles.  While the extremal graphs of all previously discussed
results are Tur\'an graphs (complete $r$-partite graphs, where~$r=3$ in the case
of squared paths and cycles) the graphs~$G_p(n,\delta)$ and~$G_c(n,\delta)$ have
a rather different structure. In fact they do contain squared cycles $C^2_\ell$
for all $3\le\ell\le\sqc(n,\delta)$ with $\ell\neq 5$. If any one of these
`extra' squared cycles with chromatic number~$4$ is not present in the host
graph $G$, then~\ref{thm:main:ii} of Theorem~\ref{thm:main} guarantees even much
longer squared cycles $C^2_{\ell}$ in $G$, where~$\ell$ is a multiple of~$3$.
% These results are tight, as shown in Section~\ref{sect:tight}. 
 
\begin{theorem} 
\label{thm:main} 
  For any $\nu>0$ there exists an integer $n_0$ such that for all integers 
  $n>n_0$ and $\delta\in[(\frac12+\nu)n,\frac23n]$ the 
  following holds for all $n$-vertex graphs $G$ with minimum degree $\delta(G)\ge \delta$.  
  \begin{enumerate}[label=\irom] 
    \item\label{thm:main:i} 
      $P^2_{\sqp(n,\delta)}\subset G$ and $C^2_{\ell}\subset G$
      for every $\ell\in\NATS$ with $3\le\ell\le \sqc(n,\delta)$ such that~$3$ divides~$\ell$.
    \item\label{thm:main:ii} 
      Either $C^2_\ell\subset G$ for every $\ell\in\NATS$ with $3\le\ell\le\sqc(n,\delta)$ 
      and $\ell\neq 5$, or $C^2_{\ell}\subset G$ for every $\ell\in\NATS$ with $3\le\ell\le 
      6\delta-3n-\nu n$ such that~$3$ divides~$\ell$.
  \end{enumerate} 
\end{theorem}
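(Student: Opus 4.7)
My plan is to use the now-standard blueprint for such precise extremal results: a stability dichotomy that separates an \emph{extremal case} (when $G$ is close in edit distance to one of the extremal graphs $G_p(n,\delta)$ or $G_c(n,\delta)$) from a \emph{non-extremal case}. In the extremal case I would give a direct combinatorial argument inside the nearly explicit structure of $G$, and in the non-extremal case I would combine Szemer\'edi's Regularity Lemma with a Blow-Up Lemma (or a connecting/absorbing argument) to build the squared path or cycle from triangle-structures in the reduced graph. An approximate argument via regularity alone cannot suffice because the thresholds $\sqp,\sqc$ are tight up to the constant term and exhibit jumps at the phase transitions of Figure~\ref{fig:sqpplot}.

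For the non-extremal case I would apply regularity to obtain a reduced graph $R$ whose minimum degree is, up to an additive error, $\delta/n$ times $|V(R)|$. A squared path or squared cycle in $G$ corresponds to a sequence of regular slices $V_1,\dots,V_r$ of roughly equal sizes such that $(V_i,V_{i+1})$ is regular (and $(V_r,V_1)$ is too, in the cyclic case), i.e.\ to a walk in the ``triangle component'' of $R$. Blowing up a triangle cycle $T_1,\dots,T_r$ in $R$ with parts of sizes $s_1,\dots,s_r$ yields a squared cycle on $\frac32\sum s_i$ vertices (this is the source of the coefficient $\frac32$ in the definitions of $\sqp$ and $\sqc$), and the Blow-Up Lemma realises the embedding. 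The key auxiliary statement will be a reduced-graph lemma: any non-extremal graph of minimum degree close to $\delta/n\cdot |V(R)|$ contains a triangle cycle (or triangle path, for the squared-path statement) of length $\rc(n,\delta)$ (resp.\ $\rp(n,\delta)$) through any prescribed edge, with triangles to spare. The precise definitions of $\rp$ and $\rc$, namely the largest $r$ with $n-\delta+\lfloor\delta/r\rfloor>\delta$ and $n-\delta+\lceil\delta/r\rceil>\delta$, are exactly the feasibility conditions for such triangle paths and cycles. For the extremal case I would exploit the explicit structure of $G_p(n,\delta)$ and $G_c(n,\delta)$ to show that any graph $G$ that is $\eps$-close to one of these inherits a decomposition into cliques and independent sets joined in the prescribed pattern, then write down the longest squared path or squared cycle in this decomposition directly, observing that it has exactly $\sqp(n,\delta)$ or $\sqc(n,\delta)$ vertices.

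Part~\ref{thm:main:ii} should follow from an additional dichotomy inside the non-extremal case: if no $C^2_\ell$ with $\ell\not\equiv 0\pmod 3$ and $\ell\le\sqc(n,\delta)$ exists then $R$ contains no two adjacent triangles spanning only three vertex classes, which forces $G$ to be nearly $3$-partite; in a nearly $3$-partite graph of minimum degree $\delta$ one gets substantially more room and can find $3$-divisible squared cycles all the way up to about $6\delta-3n$. The main obstacle, I expect, will be matching the thresholds exactly rather than asymptotically. The reduced-graph lemma about finding triangle cycles of length exactly $\rc(n,\delta)$ is delicate precisely at the ``jumps'' of the piecewise-linear function in Figure~\ref{fig:sqpplot}, and the extremal-case analysis must correctly identify, for each value of $r=\rc(n,\delta)$, the full family of near-extremal graphs and handle each directly. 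The stability step itself---proving that any graph which nearly attains the threshold is close to one of the $G_c(n,\delta)$---will likely be the most technical ingredient, since unlike in Tur\'an-type stability there is no single limiting graph but a discrete family indexed by $r$.
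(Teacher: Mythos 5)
Your high-level blueprint --- Szemer\'edi regularity, the Blow-up Lemma, and a stability dichotomy tied to the extremal graphs $G_p(n,\delta)$, $G_c(n,\delta)$ --- does coincide with the paper's framework. But your central reduced-graph object is not the one that actually carries the argument. You propose a \emph{triangle cycle} through $R$: a cyclic sequence of clusters whose consecutive triples form triangles, into which the squared cycle is threaded. The paper instead introduces \emph{connected triangle factors}: collections of vertex-disjoint triangles of $R$ lying in one and the same triangle component (i.e.\ pairwise joined by triangle walks). The quantity that is bounded in the paper's Stability Lemma is $\CTF(R)$, the maximum number of vertices covered by such a factor; and the Embedding Lemma builds the squared path/cycle in $G$ not by winding it around a cycle of clusters, but by almost filling \emph{each} triangle of the factor with a long squared path and stitching these segments together along short triangle walks inside the shared triangle component. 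The two notions are genuinely different, and the paper's is what gives the exact threshold: $\CTF\bigl(G_p(n,\delta)\bigr)=3\lfloor\sqp(n,\delta)/3\rfloor$, so the extremal graphs for squared paths and for connected triangle factors coincide. A reduced-graph lemma promising a triangle \emph{cycle} of length $\rc(n,\delta)$ through a prescribed edge would in fact be false in the near-extremal configuration, since there the parts $B_i$ of $R$ are joined only through the independent set $I$ and support no long triangle cycle at all.

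Two technical points are also off. The factor $\tfrac32$ in $\sqp,\sqc$ does not come from ``blowing up a triangle cycle with parts $s_1,\dots,s_r$ to get $\tfrac32\sum s_i$ vertices''; it comes from the independence number $\lceil m/3\rceil$ of $P^2_m$ in the extremal construction: any squared path in $G_p(n,\delta)$ meets at most one clique $X_i$, keeps at most a third of its vertices in $Y$, and so has at most $\tfrac32\lceil\delta/\rp\rceil$ vertices. And you do not flag a key obstacle the paper works hard to overcome: the reduced graph $R$ inherits a slightly \emph{worse} relative minimum degree than $G$ ($\delta(R)/m\approx\delta/n-d-\eps$), so a stability lemma that merely produces a $\CTF$ matching $R$'s own minimum degree would be too weak. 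The paper therefore formulates Lemma~\ref{StabLem} to aim for $\CTF(R)\ge\sqp(m,\delta'+\eta m)$, pushing \emph{beyond} what $\delta(R)$ alone justifies, which is precisely why the failure of that bound forces $R$ (not $G$) into the explicit near-extremal structure that Lemma~\ref{lem:ext-like} then handles directly. Finally, for part~\ref{thm:main:ii} the dichotomy is not that missing a $4$-chromatic squared cycle forces $G$ to be nearly tripartite; rather, the relevant observation (Lemma~\ref{lem:cpts}\ref{lem:cpts:d} plus Tur\'an) is that a triangle component of $R$ containing no $K_4$ has a triangle-free common-neighbourhood set of minimum degree $2\delta'-m$, which already yields a connected triangle factor of size $3(2\delta'-m)$ and hence $3$-divisible squared cycles up to length roughly $6\delta-3n$.
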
 

The proof of this result relies on Szemer\'edi's Regularity Lemma\footnote{We
refer to~\cite{KOSurvey} for a survey on applications of the Regularity Lemma on
graph embedding problems.} and is presented together with the main lemmas in
Section~\ref{sec:proof}. Theorem~\ref{thm:main} cannot be extended to all values
of~$\delta(G)$ with $\delta(G)-\frac12n=o(n)$ because for infinitely many values
of~$m$ there are $C_4$-free graphs~$F$ on $m$ vertices with
$\delta(F)\ge\frac12\sqrt{m}$ (see~\cite{Rei}). Then, letting~$G$ be the
$n$-vertex graph obtained from~$F$ by adding an independent set~$I$
on~$m-\lfloor\frac12\sqrt{m}\rfloor$ vertices and inserting all edges
between~$F$ and~$I$, it is easy to see that $\delta(G)>\frac12n+\frac15\sqrt{n}$
but~$G$ does not contain a copy of $C^2_6$.
 
% Given $n$ and $\delta$ we construct two graphs $G_p(n,\delta)$ and
% $G_c(n,\delta)$. The graph $G_p(n,\delta)$ demonstrates tightness of
% Theorem~\ref{thm:main} with respect to containing squared-paths, while
% $G_c(n,\delta)$ does the same with respect to squared-cycles.
 
The following \emph{extremal graphs} show that the bounds in~\ref{thm:main:i}
and~\ref{thm:main:ii} of Theorem~\ref{thm:main} are tight (see also
Figure~\ref{fig:ext}). For~\ref{thm:main:ii} consider the complete tripartite
graph $K_{n-\delta,n-\delta,2\delta-n}$. Clearly, this graph has minimum
degree~$\delta$ and does not contain $C^2_\ell$ for any $\ell\ge 3$ not
divisible by~$3$ or $\ell\ge 3(2\delta-n)$. For the \emph{first part}
of~\ref{thm:main:i}, let~$G_p(n,\delta)$
\label{Gpndelta}
 be the $n$-vertex
graph obtained from the disjoint union of an independent  set~$Y$ on $n-\delta$
vertices and $r:=\rp(n,\delta)$ cliques $X_1,\ldots,X_r$
with~$|X_1|\le\dots\le|X_r|\le|X_1|+1$ on a total of~$\delta$ vertices, by
inserting all edges between~$Y$ and~$X_i$ for each $i\in[r]$. It is easy to
check that $\delta(G_p(n,\delta))=\delta$. Moreover any squared path
$P^2_m\subset G_p(n,\delta)$ contains vertices from at most one clique~$X_i$.
As~$Y$ is independent and~$P^2_m$ has independence number $\lceil m/3\rceil$ we
have $\lfloor2m/3\rfloor\le\lceil\delta/\rp(n,\delta)\rceil$ and thus
$m\le\lfloor\frac12(3\lceil\delta/\rp(n,\delta)\rceil+1)\rfloor=\sqp(n,\delta)$.
For the \emph{second part} of~\ref{thm:main:i}, we construct the graph
$G'_c(n,\delta)$ in the same way as~$G_p(n,\delta)$ but with~$r:=\rc(n,\delta)$
and with $|X_i|=\lceil\delta/r \rceil$ for \emph{all} $i\in[r]$. To obtain an
$n$-vertex graph $G_c(n,\delta)$ from~$G'_c(n,\delta)$ choose $v_i$ in $X_i$
arbitrarily for each $i\in[r]$ and identify all~$v_i$ with $i\le
r\lceil\delta/r\rceil-\delta$. Again $G_c(n,\delta)$ has minimum degree
$\delta$, any squared cycle~$C^2_m$ in $G_c(n,\delta)$ touches only one of
the~$X_i$, and hence $m\le\sqc(n,\delta)$. \begin{figure}[htbp] \centering
\psfrag{n-d}{\scalebox{.8}{$n-\delta$}} 
\psfrag{Gp}{$G_p(n,\delta)$} 
\psfrag{Gc}{$G_c(n,\delta)$} 
\psfrag{2d-n}{\scalebox{.8}{$2\delta-n$}} 
\psfrag{kttt}{$K_{n-\delta,n-\delta,2\delta-n}$} 
\includegraphics[width=\textwidth]{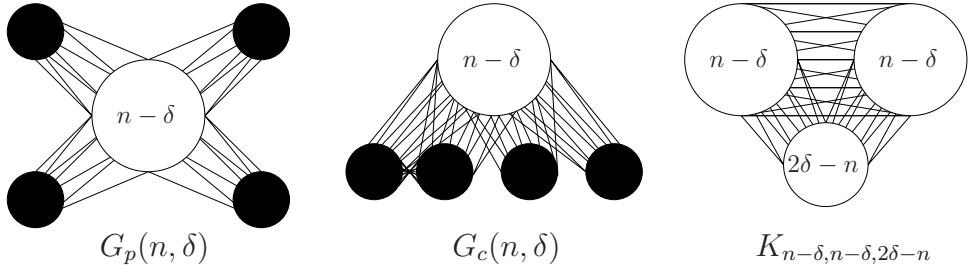} \caption{The extremal graphs,
for the  case
  $\rp(n,\delta)=\rc(n,\delta)=4$.}
\label{fig:ext} 
\end{figure} 
 
\smallskip
 
% We note that precisely this idea led to a theorem of Nikiforov and
% Schelp~\cite{NikiSchelp} showing that, for any fixed integer $k$ and
% sufficiently large $n$, a graph $G$ with minimum degree $\delta\geq n/k$ must
% contain cycles of every even length up to $\delta+1$; when one recognises that
% one must pick carefully the right location to start constructing cycles (a
% wrong starting location will lead to becoming stuck with no hope of rescue by
% local changes), it is not hard to see (as in~\cite{pathcycle}) that in fact
% $G$ contains cycles of every even length up to
% $\left\lceil\frac{n}{k-1}\right\rceil$. A similar phenomenon occurs for
% squared-paths and -cycles.
 
Before closing this introduction let us remark that similar phenomena to those
described in Theorem~\ref{thm:main} are observed with simple paths and cycles.
Every graph with minimum degree~$\delta$ contains a path of length $\lceil
n/\lfloor n/(\delta+1)\rfloor\rceil$, and the extremal graph is a vertex
disjoint union of cliques. This follows from an easy adjustment of the proof of
Dirac's theorem. Improving on results of Nikiforov and Schelp~\cite{NikiSchelp}
the first author proved the following theorem in~\cite{pathcycle}.  The methods
used for obtaining this result are quite different from those applied in this
paper. In particular they do not rely on the Regularity Lemma.
 
\begin{theorem}[Allen~\cite{pathcycle}]\label{pcycle}  
Given an integer $k\geq 2$ there is $n_0$ such that whenever $n\geq n_0$ and $G$ 
is an $n$-vertex graph with minimum degree $\delta\geq n/k$, the following are 
true. 
\begin{enumerate}[label=\irom] 
  \item $G$ contains $C_t$ for every even  
    $4\leq t\le\lceil n/(k-1)\rceil$, 
  % \item if some $2$-connected block of $G$ is non-bipartite and does not 
  % consist entirely of cutvertices then $G$ contains $C_t$ for every odd 
  %  $t\in[\lfloor2n/\delta\rfloor-1,\delta+1]$, and 
  \item\label{pcycle:missing}
    if $G$ does not contain a cycle of every length from 
    $\lfloor 2n/\delta\rfloor-1$ to 
    $\lceil n/(k-1)\rceil$ inclusive then $G$ does contain $C_t$ 
    for every even $4\le t\le 2\delta$. 
\end{enumerate} 
\end{theorem}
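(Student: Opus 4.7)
My plan is a purely combinatorial argument that avoids Szemer\'edi's Regularity Lemma, working directly with paths, cycles, and their rotations.

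\textbf{Step 1: Reduction to a single large component.} Because $\delta(G)\ge n/k$, every component of $G$ has at least $\delta+1>n/k$ vertices, and therefore the number of components is strictly less than $k$. By a short averaging argument, some component $H$ has $|V(H)|\ge\lceil n/(k-1)\rceil$, and $\delta(H)\ge\delta$. All cycles I construct will live in $H$, so from now on I may assume $G=H$ is connected with $|V(G)|\ge\lceil n/(k-1)\rceil$.

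\textbf{Step 2: A long cycle via Dirac--P\'osa rotations.} A standard rotation--extension argument applied to a longest path $P=v_0v_1\cdots v_p$ shows that both endpoints have all their $\delta$ neighbours on $P$, and by rotating the endpoint through the $\delta$ neighbour positions I obtain a cycle $C$ of length at least $\min\bigl\{|V(G)|,\,2\delta\bigr\}$. If $C$ spans $G$ I am essentially done; otherwise the connectivity of $G$ guarantees a vertex outside $C$ adjacent to $C$, which, combined with rotations, yields a longer path and a longer cycle. I iterate until the cycle saturates either $|V(G)|$ or $2\delta$.

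\textbf{Step 3: Generating all even lengths in the easy range (part (i)).} Given the cycle $C$ from Step~2, I obtain cycles of every even length $4\le t\le\lceil n/(k-1)\rceil$ by a shortening procedure: I show that any cycle $C'$ of length $\ell\le|C|-2$ for which $\ell\ge 4$ can be produced from $C$ by iterating two operations. Operation A (``shortcut by two'') uses the fact that on any arc of $C$ of length $\ge \lfloor 2n/\delta\rfloor$ some vertex has two neighbours whose cyclic distance on $C$ differs by $2$, producing a cycle two shorter of the same parity. Operation B (``parity flip'') uses a vertex $u\notin C$ with $\ge2$ neighbours on $C$; suitably chosen consecutive pairs of neighbours let me replace an arc by a detour through $u$, shifting the length by $\pm1$. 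Operation B is available whenever the complement of $C$ in $G$ is non-empty, in which case I get \emph{both} parities; when $C$ already spans $G$ I instead invoke chords of the Hamilton cycle. Short even cycles ($C_4$, $C_6$) are handled separately via common-neighbourhood counting: $\delta\ge n/k$ forces many pairs of vertices with multiple common neighbours.

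\textbf{Step 4: The dichotomy for part (ii).} This is the main obstacle. The hypothesis of (ii) says that some length $t^\ast\in[\lfloor 2n/\delta\rfloor-1,\lceil n/(k-1)\rceil]$ is missing. I view this as a strong structural restriction: the shortening/parity-flipping machinery of Step~3 would, unhindered, produce a cycle of every length in that window. The only way $t^\ast$ can fail to appear is if \emph{Operation B cannot be applied at the relevant scale}, which, tracing through the proof, forces $G$ to be near-bipartite in a controlled way (more precisely, most of $G$ decomposes into two classes whose bipartite subgraph carries the cycle structure). In such a near-bipartite graph only even cycles are relevant, but now a much longer even cycle is available from Step~2 (length up to $2\delta$), and Operation A alone---which preserves parity---is enough to descend through every even value from $2\delta$ down to $4$. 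The delicate part is showing that the absence of a \emph{single} length in the critical window is enough to trigger this near-bipartite conclusion; this is the place where the proof needs case analysis on how close $\delta$ is to $n/k$, on whether the missing $t^\ast$ is odd or even, and on how $C$ from Step~2 sits inside $G$.

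The principal difficulty is Step~4: all of Steps~1--3 are routine rotation--extension arguments, but Step~4 requires showing that missing one cycle length already imposes the near-bipartite structure, and that this structure supports the full even spectrum up to $2\delta$.
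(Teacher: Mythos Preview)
This theorem is not proved in the present paper at all: it is merely quoted from Allen's separate paper~\cite{pathcycle}, as the surrounding text makes explicit (``the first author proved the following theorem in~\cite{pathcycle}. The methods used for obtaining this result are quite different from those applied in this paper. In particular they do not rely on the Regularity Lemma.''). There is therefore no proof here to compare your proposal against.

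That said, a remark on your sketch itself. Steps~1--2 are standard and fine. Step~3 already has a gap: your ``Operation~A'' claims that on a long enough arc some vertex has two neighbours on the cycle at cyclic distance exactly~2, but nothing in the hypotheses forces this---a vertex's neighbours on $C$ could all be consecutive, or all pairwise far apart. You need a more careful chord-counting or crossing-chord argument (of the Bondy--Simonovits or Erd\H{o}s--Gallai flavour) to descend through even lengths. Step~4 is the real problem: you assert that the failure of a single cycle length forces a near-bipartite structure, but you give no mechanism for this, and in fact the extremal examples for missing odd cycles (unions of cliques, blow-ups of short odd cycles, etc.) are not near-bipartite in any useful sense. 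The actual argument in~\cite{pathcycle} is considerably more delicate than a single structural dichotomy; your outline does not yet contain the key idea.
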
 
 
\section{Main lemmas and proof of Theorem~\ref{thm:main}} 
\label{sec:proof}

Our proof of Theorem~\ref{thm:main} combines the Stability Method pioneered by
Simonovits~\cite{Sim}, the Regularity Method which pivots around the joint
application of Szemer\'edi's celebrated Regularity Lemma~\cite{Sze76}, and the
so-called Blow-up Lemma by Koml\'os, S\'ark\"ozy and Szemer\'edi~\cite{KSS_bl}.
The combination of these three methods has proved useful for a variety of exact
embedding results and was applied for example in~\cite{KSS_pos}.
However, this well-established technique provides only a rather loose
% philosophical 
framework for proofs of this kind.
For our application we will embellish this framework with a new concept, which
we call the connected triangle components of a graph.

In this section we explain how we use connected triangle components, the
Regularity Method, and the Stability Method. We first provide the necessary
definitions, formulate our main lemmas (whose proofs are provided in the
remaining sections of this paper), and sketch how they work together in the
proof of Theorem~\ref{thm:main}. The details of this proof are then presented
at the end of this section. 

\smallskip

\paragraph{\bf Notation.} 

For a graph~$G$ we write $V(G)$ and $E(G)$ to
denote its vertex set and edge set, respectively, and set $v(G)=|V(G)|$, 
$e(G)=|E(G)|$ and $e(X,Y)=|\{xy\in E(G):x\in X, y\in Y\}|$ for sets $X,Y\subset
V(G)$. The graph $G[X]$ is the subgraph of~$G$ \emph{induced} by~$X$. The
\emph{neighbourhood} of a vertex~$v$ in~$G$ is denoted by~$\Gamma(v)$ and
$\Gamma(u,v)$ is the \emph{common neighbourhood} of $u,v\in V(G)$. For an edge
$uv=e\in E(G)$ we also write $\Gamma(e)=\Gamma(u,v)$. The minimum degree of
$G$ is denoted by $\delta(G)$ and for two sets $X,Y\subset V(G)$ we define
$\delta_Y(X)=\min_{x\in X}|\Gamma(x)\cap Y|$ and
$\delta_G(X)=\delta_{V(G)}(X)$.

When we say that a statement $\mathsf S(\epsilon,\epsilon')$ holds for 
positive real numbers
$
  \eps\gg\eps'
$,
then
we mean that, given an arbitrary $\eps>0$, we can find an $\epsilon''>0$ such
that $\mathsf S(\epsilon,\epsilon')$ holds for all
$\epsilon'\in(0,\epsilon'']$.
%and that we can make $\eps$ arbitrarily small by choosing $\eps'$ sufficiently small.

\smallskip

\paragraph{\bf Connected triangle components and triangle factors.}

Connected triangle components and connected triangle factors are the 
main protagonists in the proof of Theorem~\ref{thm:main}.
Roughly speaking, in a connected triangle component we can start in an
arbitrary triangle and reach each other triangle by ``walking'' through a
sequence of triangles, and a connected triangle factor is a collection of
vertex disjoint triangles each pair of which is connected in this way.

To make this precise, let $G=(V,E)$ be a graph. A \emph{triangle walk} in $G$ is
a sequence of edges $e_1,\dots,e_p$ in $G$ such that $e_i$ and $e_{i+1}$ share a
triangle of $G$ for all $i\in[p-1]$. We say that $e_1$ and $e_p$ are
\emph{triangle connected} in $G$. 
% and occasionally also refer to the corresponding sequence of triangles as 
% triangle walk from $e_1$ to $e_p$. 
A \emph{triangle component} of $G$ is a maximal set of edges $C\subset E$ such
that every pair of edges in $C$ is triangle connected. Observe that this induces
an equivalence relation on the edges of $G$, but a vertex may be part of many
triangle components. In addition a triangle component does not need to form an
induced subgraph of $G$ in general.  The \emph{vertices of a triangle component}
$C_i$ are all vertices $v$ such that some edge $uv$ of $G$ is contained in
$C_i$.  We define the \emph{size $|C|$ of a triangle component} $C$ to be the
number of vertices of $C$.

A \emph{triangle factor}~$T$ in a graph~$G$ is a collection of vertex disjoint 
triangles in $G$. $T$ is a \emph{connected triangle factor} if all edges of~$T$
are in the same triangle component of~$G$. We define the \emph{size} of~$T$ to
be the number
of vertices covered by~$T$. We let $\CTF(G)$ denote the
maximum size of a connected triangle factor in $G$. It is not difficult to
check for example that any connected triangle factor in $G_p(n,\delta)$
contains only vertices of at most one of the cliques $X_i$ (cf.\ the
definition of $G_p(n,\delta)$ below Theorem~\ref{thm:main}) and of the
independent set~$Y$. Hence
\begin{equation}\label{eq:CTFGp}
  \CTF\big(G_p(n,\delta)\big) =
  3\left\lfloor\frac{\sqp(n,\delta)}{3}\right\rfloor .
  % \qquad\text{and}\qquad
  % \CTF\big(G_c(n,\delta)\big) =
  % 3\left\lfloor\frac{\sqc(n,\delta)}{3}\right\rfloor
\end{equation}
Suppose that a graph $G$ contains a square-path of length $\ell$. Then
obviously, $\CTF(G)\ge 3\lfloor \ell/3\rfloor$. Thus,~\eqref{eq:CTFGp}
together with Theorem~\ref{thm:main}\ref{thm:main:i} says that~$G_p(n,\delta)$
minimises $\CTF$ among all graphs of order $n$ and minimum degree $\delta$.

We will usually find that the number of vertices in a triangle component and
the size of a maximum connected triangle factor in that component are quite
different. As we will explain next, for the purposes of embedding squared paths
and squared cycles, it is the size of a connected triangle factor
that is important.

\smallskip

\paragraph{\bf The Regularity Method.}

The Regularity Lemma provides a partition of a dense graph that is suitable for
an application of the Blow-up Lemma, which is an embedding result for large host
graphs. In order to formulate the versions of these two lemmas that we will use,
we first introduce some terminology.
 
Let $G=(V,E)$ be a graph and $\eps,d\in(0,1]$. For disjoint nonempty $U,W\subset
V$ the \emph{density} of the pair $(U,W)$ is $d(U,W)=e(U,W)/|U||W|$. A pair
$(U,W)$ is \emph{$\eps$-regular} if
$|d(U',W')-d(U,W)|\le\eps$ for all $U'\subset U$ and $W'\subset W$ with
$|U'|\ge\eps|U|$ and $|W'|\ge\eps|W|$. An \emph{$\eps$-regular partition} of~$G$ is a
partition $V_0\dcup V_1\dcup\dots\dcup V_k$ of $V$ with $|V_0|\le\eps |V|$,
$|V_i|=|V_j|$ for all $i,j\in[k]$, and such that for all but at most $\eps
k^2$ pairs $(i,j)\in[k]^2$, the pair $(V_i,V_j)$ is $\eps$-regular.

Given some $0<d<1$ and a pair of
disjoint vertex sets $(V_i,V_j)$ in a graph $G$, we say that $(V_i,V_j)$ is
\emph{$(\eps,d)$-regular} if it is $\eps$-regular and has density at least $d$. We say
that an $\eps$-regular partition $V_0\dcup V_1\dcup\dots\dcup V_k$ of a graph $G$
is an \emph{$(\eps,d)$-regular partition} if the following is true. For every $1\leq
i\leq k$, and every vertex $v\in V_i$, there are at most $(\eps+d)n$ edges
incident to $v$ which are not contained in $(\eps,d)$-regular pairs of the
partition.

Given an $(\eps,d)$-regular partition $V_0\dcup V_1\dcup\dots\dcup V_k$ of a
graph $G$, we define a graph $R$, called the \emph{reduced graph} of the
partition of $G$, where $R=(V(R),E(R))$ has $V(R)=\{V_1,\ldots,V_k\}$ and
$V_iV_j\in E(R)$ whenever $(V_i,V_j)$ is an $(\eps,d)$-regular pair. We will
usually omit the partition, and simply say that $G$ has
\emph{$(\eps,d)$-reduced graph}~$R$. We call the partition classes $V_i$ with
$i\in[k]$ \emph{clusters} of $G$. Observe that our definition of the reduced
graph~$R$ implies that for $T\subset V(R)$ we can for example refer to the set
$\bigcup T$, which is a subset of $V(G)$.

The celebrated Szemer\'edi Regularity Lemma~\cite{Sze76} states that every
large graph has an $\eps$-regular partition with a bounded number of parts.
Here we state the so-called degree form of this lemma (see, e.g.,
\cite[Theorem~1.10]{KS96}).

\begin{lemma}[Regularity Lemma, degree form]\label{lem:SzReg}
  For every $\eps>0$ and every integer $m_0$, there is $m_1$ such that for every
  $d\in[0,1]$ every graph $G=(V,E)$ on $n\ge k_1$ vertices has an
  $(\eps,d)$-reduced graph~$R$ on $m$ vertices with $m_0\le m\le m_1$.
\end{lemma}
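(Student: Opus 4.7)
The plan is to derive this degree form from the classical Szemer\'edi Regularity Lemma, which guarantees an $\eps'$-regular partition (with an exceptional class of size at most $\eps' n$ and at most $\eps' k^2$ irregular pairs) but with no per-vertex control over bad edges. I would apply this classical version with a small auxiliary parameter $\eps'$ satisfying $\eps' \ll \eps^2$ and minimum partition size $m_0' \ge \max(m_0, 2/\eps)$ to obtain an $\eps'$-regular partition $V_0 \dcup V_1 \dcup \dots \dcup V_k$, and then perform a two-stage cleanup by moving a controlled number of problematic vertices into $V_0$. The output $m_1$ will be the upper bound $m_1'$ produced by the classical lemma on input $\eps'$ and $m_0'$.

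In the first stage, I would declare a cluster $V_i$ \emph{bad} if it participates in more than $\sqrt{\eps'} k$ irregular pairs; since the total irregular-pair count is at most $\eps' k^2$ and each pair contributes to two clusters, there are at most $2\sqrt{\eps'}k$ bad clusters, whose at most $2\sqrt{\eps'} n$ vertices are absorbed into $V_0$. After this, every vertex in a surviving cluster $V_i$ has at most $\sqrt{\eps'} k \cdot (n/k) = \sqrt{\eps'} n$ edges into clusters forming irregular pairs with $V_i$. In the second stage, for each $\eps'$-regular pair $(V_i, V_j)$ of density less than $d$, I would call $v \in V_i$ \emph{heavy for $(V_i, V_j)$} if $|\Gamma(v) \cap V_j| > (d + \eps')|V_j|$. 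The definition of $\eps'$-regularity directly bounds the number of such $v$ by $\eps' |V_i|$, and a double count then shows that the number of vertices that are heavy for more than $\sqrt{\eps'} k$ low-density pairs is at most $\sqrt{\eps'} n$; absorb these \emph{very heavy} vertices into $V_0$ too. For each surviving $v \in V_i$, the contribution of low-density pairs to its bad-edge count is at most $\sqrt{\eps'} k \cdot (n/k) = \sqrt{\eps'} n$ from pairs where $v$ is heavy (using the trivial bound $|\Gamma(v)\cap V_j|\le|V_j|$) plus at most $(d + \eps')n$ from the remaining low-density pairs.

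To finish, I would trim each surviving cluster to a common size (sending the surplus into $V_0$) and verify both $|V_0| \le \eps n$ and the per-vertex bound of $(\eps + d)n$ by summing the contributions (own cluster at most $n/k$, edges to $V_0$ at most $\eps n$, irregular pairs at most $\sqrt{\eps'} n$, and low-density pairs at most $(d + \eps' + \sqrt{\eps'})n$); all of these hold provided $\eps'$ is chosen small enough as a function of $\eps$. The main obstacle is clearly the per-vertex treatment of low-density pairs: $\eps'$-regularity only provides an average-density bound, so a single $v \in V_i$ can a priori have nearly $|V_j|$ neighbours in a low-density pair $(V_i, V_j)$. The introduction of the \emph{very heavy} vertex notion, together with the double count that shows such vertices are rare enough to be absorbed into $V_0$ without violating the $\eps n$ budget, is the technical heart of the argument.
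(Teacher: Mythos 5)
The paper does not give its own proof of Lemma~\ref{lem:SzReg}; it cites the Koml\'os--Simonovits survey, so there is no in-paper argument to compare against. Your proposed derivation from the classical Regularity Lemma via a two-stage cleanup is essentially the standard route, and the architecture is sound. The observation that the per-vertex control over low-density pairs is the genuine obstacle (since $\eps'$-regularity only gives average-density control), and that it is handled by the heavy/very-heavy vertex distinction together with a double count, is exactly right; the estimates you sketch for bad clusters, very-heavy vertices, and the final per-vertex bad-edge sum are all of the correct order.

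There is, however, one real gap in the density bookkeeping. You classify a pair $(V_i,V_j)$ as low-density exactly when its density is $< d$, and apply the heavy-vertex argument only to those pairs — but these densities are measured on the \emph{untrimmed} clusters. When you subsequently trim every surviving cluster to a common size, the density of a pair can drift by up to an additive $\eps'$, so a pair with pre-trimming density in $[d,\,d+\eps')$ may fall below $d$ afterwards. Such a pair is then not $(\eps,d)$-regular in the final partition, every edge from $v\in V_i$ into $V_j$ is a bad edge, and yet $v$ was never protected by the heavy-vertex count for that pair; a vertex with many such pairs could blow the $(\eps+d)n$ budget, since you only have the trivial bound $|\Gamma(v)\cap V_j|\le|V_j|$ there. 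The fix is routine but must be stated: take ``low-density'' to mean density $< d+3\eps'$ (say), with the heavy threshold $(d+4\eps')|V_j|$. Then every unprotected pair keeps density $\ge d+2\eps'>d$ after trimming and enters $R$, while the widened low-density class still contributes at most $(d+4\eps'+\sqrt{\eps'})n$ bad edges per vertex, which your choice $\eps'\ll\eps^2$ comfortably absorbs.
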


For our purpose it is convenient to work with even a different version of the
regularity lemma, which takes into account that we are dealing with graphs of
high minimum degree. This lemma is an easy corollary of
Lemma~\ref{lem:SzReg}. A proof can be found, e.g., in~\cite[Proposition
9]{KOTPlanar}.

\begin{lemma}[Regularity Lemma, minimum degree form]\label{lem:reg}
 For all $\eps$, $d$, $\gamma$ with $0<\eps<d<\gamma< 1$ and for every
 $m_0$, there is $m_1$ such that every
 graph $G$ on $n>m_1$ vertices with $\delta(G)\ge\gamma n$ has an
 $(\eps,d)$-reduced graph~$R$ on $m$ vertices with $m_0\le m\le m_1$ and
 $\delta(R)\ge(\gamma-d-\eps)m$.
\end{lemma}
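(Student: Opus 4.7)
The plan is to invoke Lemma~\ref{lem:SzReg} to produce the partition and then transfer the minimum degree of $G$ into a lower bound on the minimum degree of the reduced graph $R$ via a short edge count. Given $\eps$, $d$, $\gamma$, and $m_0$, I would first apply Lemma~\ref{lem:SzReg} with the same parameters $\eps$, $d$, $m_0$ to obtain, for all sufficiently large $G$, an $(\eps,d)$-regular partition $V_0 \dcup V_1 \dcup \cdots \dcup V_m$ with reduced graph $R$ on $m$ vertices satisfying $m_0 \le m \le m_1$. By the definition of an $(\eps,d)$-regular partition recalled just before Lemma~\ref{lem:SzReg}, every vertex $v \in V_i$ is incident to at most $(\eps+d)n$ edges which lie outside $(\eps,d)$-regular pairs of the partition.

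Next I would fix an arbitrary $i \in [m]$ and $v \in V_i$ and argue as follows. Since $|\Gamma(v)| \ge \gamma n$, at least $(\gamma - d - \eps)n$ neighbours of $v$ lie in clusters $V_j$ such that $(V_i, V_j)$ is an $(\eps,d)$-regular pair, and each such $V_j$ is a neighbour of $V_i$ in $R$. As $V_1, \dots, V_m$ equipartition $V(G) \setminus V_0$, each cluster has $|V_j| \le n/m$, and so these neighbours are spread over at least $(\gamma-d-\eps)n \cdot (m/n) = (\gamma-d-\eps)m$ distinct clusters $V_j$. Minimising over $i$ then yields $\delta(R) \ge (\gamma-d-\eps)m$, as claimed.

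There is essentially no substantive obstacle in this derivation. The regime $\eps < d < \gamma$ is exactly what makes the conclusion nontrivial, and the degree-form guarantee already bundles the three types of ``wasted'' edges at $v$---those going to $V_0$, those inside $V_i$ itself, and those crossing irregular or low-density pairs---into the single slack term $(\eps+d)n$, so no separate bookkeeping using $|V_0| \le \eps n$ is required before the final count.
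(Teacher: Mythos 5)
Your proof is correct and carries out exactly the reduction the paper alludes to when it calls Lemma~\ref{lem:reg} ``an easy corollary of Lemma~\ref{lem:SzReg}'' (the paper itself only cites a reference rather than spelling out the argument). The key points are all in place: invoking the degree form to get the $(\eps,d)$-regular partition, using the packaged bound of $(\eps+d)n$ wasted edges per vertex, using that the non-exceptional clusters all have size at most $n/m$, and observing that a single vertex $v\in V_i$ already witnesses $\deg_R(V_i)\ge(\gamma-d-\eps)m$.
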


This lemma asserts that the reduced graph~$R$ of~$G$ ``inherits'' the high
minimum degree of~$G$. We shall use this
property in order to reduce the original problem of finding a squared
path (or cycle) in an $n$-vertex graph with minimum degree $\gamma n$ to the
problem of finding an \emph{arbitrary} connected triangle factor of a certain size in an
$m$-vertex graph~$R$ with minimum degree $(\gamma-d-\eps)m$. The new problem 
is much less particular about the required subgraph than the original one and
hence easier to attack (see Lemma~\ref{StabLem}).

This kind of reduction is made possible by the Blow-up Lemma. 
Roughly, this lemma asserts that a bounded degree graph~$H$ can be embedded into
a graph~$G$ with reduced graph~$R$ if there is a homomorphism from~$H$ to a
subgraph $S$ of~$R$ which does not ``overfill'' any of the clusters in~$S$. In
our setting we apply this lemma with~$S=K_3$ and conclude that for \emph{each}
triangle~$t$ of a connected triangle factor~$T$ in~$R$ we find a squared path
in~$G$ that almost fills the clusters of~$G$ corresponding to~$t$. By using
the fact that~$T$ is triangle \emph{connected} it is then possible to connect these
squared paths into squared paths or cycles of the desired overall length. 
In addition, the Blow-up Lemma allows for some control about the start- and
end-vertices of the path that is constructed in this way (cf.\
Lemma~\ref{lem:bl}\ref{lem:bl:3}).

The following lemma summarises this embedding technique, which is also implicit, e.g.,
in~\cite{KSS_pos}. For completeness we provide a proof of this lemma in the
appendix.
 
\begin{lemma}[Embedding Lemma]
\label{lem:bl}
For all $d>0$ there exists $\eps_\subsc{EL}>0$ with the following property.
Given $0<\eps<\eps_\subsc{EL}$, for every $m_\subsc{EL}\in\mathbb{N}$ there
exists $n_\subsc{EL}\in\mathbb{N}$ such that the following hold for any graph $G$ on $n\ge n_\subsc{EL}$ vertices with
$(\eps,d)$-reduced graph $R'$ on $m\le m_\subsc{EL}$ vertices.
\begin{enumerate}[label=\irom]
  \item\label{lem:bl:1}
    $C^2_{3\ell}\subset G$ for every $\ell\in\NATS$ with $3\ell\le
    (1-d)\CTF(R')\frac{n}{m}$.
  \item\label{lem:bl:2}
    If $K_4\subset C$ for each triangle component~$C$ of~$R'$, then
    $C^2_\ell\subset G$ for every $\ell\in\NATS\setminus\{5\}$ with
    $3\le\ell\le (1-d)\CTF(R')\frac{n}{m}$.
    % with $\ell\neq 5$.
\end{enumerate}
Furthermore, let $T$ be a connected triangle factor in a triangle component~$C$
of~$R$ with $K_4\subset C$, let $u_1v_1,u_2v_2\in E(G)$ be disjoint edges, and
suppose that there are (not necessarily disjoint) edges $X_1Y_1, X_2Y_2\in C$
such that the edge $u_iv_i$ has at least $2d\frac{n}{m}$ common neighbours in
each cluster $X_i$ and $Y_i$ for $i=1,2$. Then
\begin{enumerate}[label=\irom,start=3]
  \item\label{lem:bl:3}
    $P^2_{\ell}\subset G$ for every $\ell\in\NATS$ with
    $6(m+2)^3<\ell<(1-d)|T|\frac{n}{m}$, such that $P^2_{\ell}$ starts in
    $u_1,v_1$ and ends in $u_2,v_2$ (in those orders) and at most $(\eps+d)n$
    vertices of $P^2_{\ell}$ are not in $\bigcup T$.
\end{enumerate}
\end{lemma}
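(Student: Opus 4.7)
The strategy is to find a maximum connected triangle factor $T=\{t_1,\ldots,t_k\}$ in $R'$, apply the Blow-up Lemma of Koml\'os, S\'ark\"ozy and Szemer\'edi inside each cluster triple of $G$ corresponding to a triangle of $T$, and stitch these squared-path segments together along triangle walks that link consecutive $t_j$'s.

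First I would fix $\eps_\subsc{EL}\ll d$ and, given $\eps<\eps_\subsc{EL}$ and $m_\subsc{EL}$, choose $n_\subsc{EL}$ large enough that each cluster of $G$ contains $(1\pm\eps)n/m$ vertices. For each triangle $t_j=X_jY_jZ_j$ of $T$, a standard clean-up (removing the $\eps$-fraction of atypical vertices from each cluster) produces an $(\eps',d')$-super-regular tripartite blow-up in $G$ on which the Blow-up Lemma embeds a squared path of any prescribed length divisible by $3$ up to $(1-d/2)\cdot 3n/m$; the ``image-restricted'' version additionally lets one prescribe the starting and ending edges. Since $t_j$ and $t_{j+1}$ lie in the same triangle component of $R'$, they are linked by a triangle walk of length $O(m)$, and by traversing this walk and using the $(\eps,d)$-regularity of each pair along it one can greedily build a short connecting squared path of $O(m)$ vertices joining any chosen boundary edge of the $t_j$-block to any chosen boundary edge of the $t_{j+1}$-block. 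Over all $k\le m/3$ triangles these connectors consume $O(m^2)$ vertices, easily absorbed in the $dn$ slack once $n/m$ is large. Closing the chain from $t_k$ back to $t_1$ proves \ref{lem:bl:1} for any $3\ell\le(1-d)\CTF(R')n/m$, by tuning the per-triangle Blow-up lengths so that they sum with the connectors to exactly $3\ell$. For \ref{lem:bl:3} the prescribed edges $u_iv_i$ are attached to the two ends of $T$ in the same way: each has $\ge 2d\cdot n/m$ common neighbours in $X_i$ and $Y_i$, which provides abundant room to route a short connector into the super-regular blow-up of a triangle of $T$ that shares the edge $X_iY_i$ in $C$, and then to apply the Blow-up Lemma with $u_i,v_i$ fixed as endpoints; the lower bound $\ell>6(m+2)^3$ guarantees enough room to perform all routing and phase correction.

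For \ref{lem:bl:2}, the hypothesis that every triangle component of $R'$ contains a $K_4$ lets me build a ``phase-shift gadget.'' Inside a blown-up $K_4$ one may construct short squared paths of every length except $5$, since the fourth cluster can be used to step ahead by one or two positions in the cyclic $X_jY_jZ_j$-pattern and thereby change the residue of the path length mod $3$; the excluded $\ell=5$ corresponds to $C^2_5=K_5$, which is genuinely obstructed by chromatic number. Inserting one such gadget into a cycle produced by \ref{lem:bl:1} adjusts its length by $\pm 1\pmod{3}$ and yields $C^2_\ell$ for every $3\le\ell\le(1-d)\CTF(R')n/m$ with $\ell\ne 5$. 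The main difficulty throughout is the bookkeeping of lengths modulo $3$ against the total vertex budget: one must simultaneously ensure that the per-triangle Blow-up lengths and the connector/gadget segments combine into the exact target length, that all connectors and gadgets fit inside the $dn$ slack, and that the modular adjustment in \ref{lem:bl:2} can always be realised except at the forbidden value $\ell=5$.
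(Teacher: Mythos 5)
Your proposal follows the same high-level strategy as the paper: fix a maximum connected triangle factor in the reduced graph, embed long squared-path segments into the cluster triple of each triangle via the Blow-up Lemma, and link consecutive triangles with short squared paths built greedily along triangle walks; the blown-up $K_4$ in \ref{lem:bl:2} serves as a phase-shift gadget exactly as you describe, and \ref{lem:bl:3} is handled by routing from the prescribed edges into this structure.

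The genuine gap is in the modular bookkeeping for \ref{lem:bl:1}, which you correctly flag as ``the main difficulty'' but do not actually resolve. You claim a simple chain $t_1\to t_2\to\cdots\to t_k\to t_1$ can be made to have total length exactly $3\ell$ ``by tuning the per-triangle Blow-up lengths so that they sum with the connectors to exactly $3\ell$''. But each per-triangle segment has length forced modulo $3$ (you insist they are multiples of $3$), and the connector lengths are \emph{also} forced modulo $3$: the short squared path along a triangle walk has a residue determined by the walk and by the orientation of its entry edge, since changing its length by $1$ or $2$ changes the pair of clusters it terminates in. Thus the total length of your cycle is determined modulo $3$ and is not freely tunable; you must show separately that this residue is $0$, and since \ref{lem:bl:1} makes no $K_4$ assumption you cannot fall back on the phase-shift gadget. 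The paper avoids the simple chain precisely for this reason: it builds one long connector $P'$ traversing the concatenation of all the walks $W_1,\dots,W_{r-1}$ in one orientation, and then separate connectors $P_1,\dots,P_{r-1}$ retraversing each $W_i$ with the opposite orientation of its first edge. The residue-$0$ conclusion then follows from the non-obvious identity $|Q(W,\overrightharp{U_1V_1})|+|Q(W,\overrightharp{V_1U_1})|\equiv 1\pmod{3}$, which makes the total automatically divisible by $3$ for any choice of walks. This double-traversal device (or some equivalent, such as carefully padding the walks to correct residues) is essential and absent from your argument.
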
 

The copies of~$K_4$ that are required in this lemma play a crucial r\^ole when
embedding squared cycles which are not $3$-chromatic.
 
\smallskip

\paragraph{\bf The Stability Method.}

The strategy we just described leaves us with the task of finding a big
connected triangle factor~$T$ in the reduced graph~$R$ of~$G$.
However, there is one problem with this approach: 
The proportion~$\tau$ of~$R$ covered by~$T$ is roughly equal to the
proportion of~$G$ covered by the squared path~$P$ that we obtain from the
Embedding Lemma (Lemma~\ref{lem:bl}). However, as explained above, the relative
minimum degree $\gamma_R=\delta(R)/|V(R)|$ of~$R$ is in general slightly smaller
than~$\gamma_G=\delta(G)/|V(G)|$, but the extremal graphs for squared paths and
connected triangle factors are the same. It follows that we cannot expect
that~$\tau$ is larger than the proportion a maximum squared path covers in a
graph with relative minimum degree~$\gamma_R$, and hence smaller than the
proportion we would like to cover for relative minimum degree~$\gamma_G$.

Consequently we need to be more ambitious and shoot for a bigger connected
triangle factor in~$R$ than we can expect for this minimum degree (cf.\
Lemma~\ref{StabLem}~\ref{StabLem:1} and~\ref{StabLem:2}). This will of course
not always be possible, but it will only fail if~$R$ (and hence~$G$) is `very
close' to the extremal graph $G_p(|V(R)|,\delta(R))$ (and hence also to
$G_c(|V(R)|,\delta(R))$) in which case we will say that~$R$ is
\emph{near-extremal} (cf.\ Lemma~\ref{StabLem}~\ref{StabLem:3}).

This approach is called the Stability Method and the following lemma states that it
is feasible for our purposes. This lemma additionally guarantees copies of~$K_4$ as
required by the Embedding Lemma. We formulate this lemma for graphs~$G$, but
use it on the reduced graph~$R$ later. Its proof does not rely on the
Regularity Lemma and is given in Section~\ref{sec:comp}.

\begin{lemma}[Stability Lemma]
\label{StabLem} 
  Given $\mu>0$, for any sufficiently small $\eta>0$ there exists $n_0$ such
  that if $G$ has $n>n_0$ vertices and
  $\delta(G)=\delta\in((\frac12+\mu)n,\frac{2n-1}{3})$, then either
  \begin{enumerate}[label={\itm{S\arabic{*}}}]
    \item\label{StabLem:1} $\CTF(G)\ge 3(2\delta-n)$, or 
    \item\label{StabLem:2}  
      $\CTF(G)\ge\min( \sqp(n,\delta+\eta n), \frac{11n}{20})$, 
      % and every triangle-component of $G$ contains a $K_4$ 
      or 
    \item\label{StabLem:3} 
      $G$ has an independent set of size at least $n-\delta-11\eta n$ whose
      removal disconnects $G$ into  
      components, each of size at most $\frac{19}{10}(2\delta-n)$. 
  \end{enumerate} 
  Moreover, in cases~\ref{StabLem:2} and~\ref{StabLem:3} 
  each triangle component of $G$ contains a $K_4$. 
\end{lemma}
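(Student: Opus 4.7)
This is a stability-type statement: $G$ either admits a large connected triangle factor (S1 or S2) or must be structurally close to the extremal graph $G_p(n,\delta)$ (S3). The starting point is the Corr\'adi--Hajnal theorem (Theorem~\ref{thm:CorHaj}), which provides a triangle factor $F$ of size exactly $3(2\delta-n)$. I partition $F$ by triangle component: let $k_j$ denote the number of triangles of $F$ lying in $C_j$, so that $\sum_j k_j=2\delta-n$; since the triangles of $F_j:=F\cap C_j$ are triangle-connected inside $C_j$, already $\CTF(C_j)\ge 3k_j$. The subsequent case analysis is driven by how concentrated the $k_j$'s are.

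If $k_{j^*}=2\delta-n$ for some $j^*$, then S1 is immediate. Otherwise $F$ genuinely splits across several components, and I enlarge the connected triangle factor $F_{j^*}$ inside the largest component $C_{j^*}$ by a greedy absorbing procedure: since any two vertices of $G$ share at least $2\delta-n\ge 2\mu n$ common neighbours, any edge already in $C_{j^*}$ extends to many further triangles, and a counting argument then provides candidate triangles \emph{disjoint} from the current factor that are reachable by short triangle walks, hence triangle-connected to it. I iterate absorption until either the factor reaches $\min\bigl(\sqp(n,\delta+\eta n),\tfrac{11n}{20}\bigr)$ vertices --- giving S2 --- or the procedure stalls.

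A stall forces S3. Let $W$ be the vertex set of the absorbed factor and set $I:=V(G)\setminus W$. The stalling condition translates into the following structural information: for each $v\in I$, most pairs of $v$-neighbours in $W$ either fail to form a triangle with $v$ or form a triangle that is not triangle-connected to $C_{j^*}$. Combining this with $\delta(G)\ge\delta$ and summing over $v\in I$, I deduce that $|I|\ge n-\delta-11\eta n$, that $I$ is independent (edges inside $I$ would otherwise extend the factor), and that each connected component of $G[V\setminus I]$ has at most $\tfrac{19}{10}(2\delta-n)$ vertices. The constants $11\eta n$ and $\tfrac{19}{10}$ are dictated respectively by the additive $\eta n$ margin in $\sqp(n,\delta+\eta n)$ and by the $\tfrac{11n}{20}$ threshold that the absorption is allowed to reach.

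For the ``moreover'' clause, the $K_4$ requirement in S2 and S3 is handled separately but cheaply: in S2 the absorbed factor sits in a triangle component of $\Omega(n)$ vertices, so some edge lies in two triangles whose third vertices share a common neighbour via a $\delta(G)>\tfrac{n}{2}$ pigeonhole; in S3 each component of $G[V\setminus I]$ inherits enough minimum degree from the global $\delta$ that a Tur\'an-type count yields a $K_4$. The main obstacle is the quantitative absorption-failure analysis in case S3: one must track the slack lost at each absorption step so carefully that a stall necessarily mirrors the block structure of $G_p(n,\delta)$ up to the prescribed additive errors, which is where the specific numerical constants $11\eta n$ and $\tfrac{19}{10}$ are earned.
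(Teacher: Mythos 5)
Your proposal takes a genuinely different route from the paper: you start from a Corr\'adi--Hajnal triangle factor and try to grow the largest connected piece by absorption, whereas the paper works directly with the \emph{interior/exterior} decomposition of the triangle components (Lemma~\ref{lem:cpts}), building matchings in carefully chosen sets (exterior sets, neighbourhoods $U=\{v: uv\in C\}$) via Proposition~\ref{prop:match} and then extending them to triangle factors either greedily (for $\delta<\frac35 n$) or by a Hall-type argument pairing exterior edges with interior vertices (for $\delta\ge\frac35 n$). As written, however, your argument has serious gaps.

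First, the absorption step is the heart of the argument and is essentially unspecified. You assert that a counting argument yields disjoint triangles reachable from $F_{j^*}$ by short triangle walks, but you give no mechanism for producing triangles that are simultaneously vertex-disjoint from the current factor \emph{and} in the same triangle component, nor do you say why the process cannot stall long before $\min\bigl(\sqp(n,\delta+\eta n),\tfrac{11n}{20}\bigr)$. The paper is explicit about why the naive greedy extension is doomed for $\delta\ge\tfrac35 n$: common neighbourhoods of edges are only bounded below by $4\delta-2n$, and for that range $\sqp(n,\delta)$ can exceed $4\delta-2n$; the cure is to switch to a Hall-type matching between exterior edges and the independent interior (Fact~\ref{fac:StabLem:around35}, using Proposition~\ref{prop:match}\ref{prop:match:b}). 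Your plan makes no such distinction. Second, you do not engage with the ranges $\delta\approx\tfrac35 n$ and $\delta\approx\tfrac47 n$ (the cases $r=r'+1=3$ and $r=r'+1=4$), which the paper singles out as requiring a separate and delicate case analysis (Fact~\ref{fac:StabLem:int:special}), precisely because there one cannot yet establish that $\intr(G)$ is independent and $\sqp(n,\delta)$ is close to the barrier $4\delta-2n$. Third, the ``stall $\Rightarrow$ S3'' deduction is a black box: you claim the slack tracking yields $|I|\ge n-\delta-11\eta n$, independence of $I$, and the $\tfrac{19}{10}(2\delta-n)$ bound, but give no argument; in the paper these numbers are extracted from the inequalities~\eqref{eq:r}, \eqref{eq:sqpa}, \eqref{eq:sqpb} in a careful case split (Fact~\ref{fac:StabLem:case3}). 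Finally, the ``moreover'' ($K_4$) clause is handled backwards and, as stated, incorrectly: in S2 you claim two triangles sharing an edge have third vertices with a common neighbour, but a common neighbour of $w,w'$ does not produce a $K_4$ on $\{u,v,w,w'\}$. The paper's route is cleaner and correct: if \emph{any} triangle component $C$ lacks a $K_4$, then a $U$ as in Lemma~\ref{lem:cpts}\ref{lem:cpts:d} is triangle-free with $\delta(G[U])\ge 2\delta-n$, so Tur\'an gives $|U|\ge 2(2\delta-n)$ and a matching plus greedy extension already lands in S1; hence in S2 and S3 one may assume every component has a $K_4$.
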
 

By the discussion above, it remains to handle the graphs with near-extremal
reduced graph. For these graphs we have a lot of structural information which
enables us to show directly that they contain the squared paths and squared
cycles we desire, as the following lemma documents. The proof of this lemma is
provided in Section~\ref{sec:ext}.  In this proof we shall again make use of the
embedding lemma, Lemma~\ref{lem:bl}. Accordingly Lemma~\ref{lem:ext-like}
inherits the upper bound $m_\subsc{EL}$ on the number of clusters from
Lemma~\ref{lem:bl}.

\begin{lemma}[Extremal Lemma]
\label{lem:ext-like} 
 For every $\nu>0$, given $0<\eta,d<10^{-8}\nu^4$ there exists $\eps_0>0$
 such that for every $0<\eps<\eps_0$ and every $m_\subsc{EL}$, there exists
 $N$ such that the following holds. Suppose that \begin{enumerate}[label={\irom}]
    \item\label{ext:assm1}
      $G$ is an $n$-vertex graph with $n>N$ and $\delta(G)=
      \delta>\frac{n}{2}+\nu n$,
    \item\label{ext:assm2}
      $R$ is an $(\varepsilon,d)$-reduced graph of~$G$ of order $m\le
      m_\subsc{EL}$,
      % (where $m_\subsc{EL}$ is as in Lemma~\ref{lem:bl}).
    \item\label{ext:assm3}
      each triangle component of $R$ contains a copy of $K_4$.
    \item\label{ext:assm4}
      $V(R)=I \dcup B_1 \dcup B_2\dcup\cdots\dcup B_k$ with
      $k\geq 2$,
    \item\label{ext:assm5}
      $I$ is an independent set with $|I|\ge(n-\delta-11\eta n)m/n$,
    \item\label{ext:assm6}
      for each $i\in[k]$ we have $0<|B_i|\le 19m(2\delta-n)/(10n)$, and for every
      $j\in[k]\setminus\{i\}$ there are no edges between $B_i$ and $B_j$ in $R$.
   \end{enumerate} 
   Then $G$ contains $P^2_{\sqp(n,\delta)}$ and $C^2_{\ell}$ for each
   $\ell\in[3,\sqc(n,\delta)]\setminus\{5\}$.
\end{lemma}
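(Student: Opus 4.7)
The approach is to lift the partition of $R$ back to $G$, identify a single ``block'' inside which the structure closely mirrors the extremal graph $G_p(n,\delta)$, and apply the Embedding Lemma there. Write $V_I=\bigcup I$, $V_{B_j}=\bigcup B_j$, and let $V_0$ be the exceptional set of the partition of $V(G)$ underlying~$R$. Since~$I$ is independent in~$R$ and there are no edges of~$R$ between distinct $B_i$'s, every vertex of~$G$ has at most $(\varepsilon+d)n$ ``crossing'' neighbours (through non-regular pairs or regular pairs of density $<d$), and the rest of its neighbourhood lies in $V_{B_j}\cup V_I$ for the unique $j$ with $v\in V_{B_j}$. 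First I would assign each vertex of $V_I\cup V_0$ to the block~$j$ in which it has the largest neighbourhood, and correspondingly split $I=I_1\dcup\cdots\dcup I_k$ by attaching each $I$-cluster to the block it is dominantly adjacent to, obtaining a partition $V(G)=V_1\dcup\cdots\dcup V_k$ with $V_{B_j}\subset V_j\supset\bigcup I_j$.

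Second, pin down sizes. Applying the inherited minimum-degree bound $\delta(R)\ge(\delta/n-d-\varepsilon)m$ at a vertex of~$I$ forces $|V_I|\le n-\delta+(d+\varepsilon)n$, which together with~\ref{ext:assm5} locates $|V_I|$ within $O((\eta+d)n)$ of $n-\delta$. The same bound applied at a vertex of $B_j$, combined with~\ref{ext:assm6}, yields $|V_{B_j}|\in[(2\delta-n)-O((d+\varepsilon)n),\,\tfrac{19}{10}(2\delta-n)]$. Summing over~$j$ forces the block count to $k=r_p(n,\delta)$ in the generic case, with $k\in\{r_p(n,\delta)-1,r_p(n,\delta)\}$ possible only in a few boundary cases around the jump points of~$\sqp$, to be handled by direct case analysis. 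I would then identify a block index~$j^*$ for which $|V_{B_{j^*}}|+|V_{I_{j^*}}|$ and the internal density between $V_{B_{j^*}}$ and $V_{I_{j^*}}$ are large enough to accommodate a squared path of length $\sqp(n,\delta)$; the condition $\eta,d<10^{-8}\nu^4$ is precisely what guarantees that, on choosing $j^*$ appropriately, the inevitable $(\varepsilon+d)n$ loss of the Regularity framework fits inside the $11\eta n$ slack provided by~\ref{ext:assm5}.

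Third, embed. Inside $R[B_{j^*}\cup I_{j^*}]$ I would construct a connected triangle factor~$T$ of size $\sqp(n,\delta)m/n(1+O(d))$, greedily selecting vertex-disjoint triangles consisting (where possible) of two $B_{j^*}$-clusters together with one $I_{j^*}$-cluster, and filling any remainder with triangles lying entirely in $B_{j^*}$ when $|V_{I_{j^*}}|$ becomes the bottleneck. Assumption~\ref{ext:assm3} supplies the $K_4$ inside the triangle component containing~$T$ that the Embedding Lemma demands. Lemma~\ref{lem:bl}\ref{lem:bl:3} applied with~$T$ yields $P^2_{\sqp(n,\delta)}\subset G$, and Lemma~\ref{lem:bl}\ref{lem:bl:2} applied with triangle factors of size $\ell m/n$ for each $3\le\ell\le\sqc(n,\delta)$ with $\ell\neq 5$ yields $C^2_\ell\subset G$; the cases $\ell\in\{3,4\}$ reduce to finding $K_3$ and $K_4$ in $G$, immediate from lifting the $K_4$ in~$R$. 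The \emph{main obstacle} I expect is the exact arithmetic of $\sqp$ and $\sqc$: because these functions jump at specific values of~$\delta$, proving the embedded path attains \emph{precisely} $\sqp(n,\delta)$ (not merely a value $O((d+\varepsilon)n)$ below) forces careful bookkeeping of cluster sizes, tracking of the $11\eta n$ slack through each inequality, a choice of triangle-factor composition that balances the consumption of $B_{j^*}$ against that of $I_{j^*}$, and separate treatment of the handful of sub-cases near the jump points of $r_p$ where $k$ is not uniquely determined by~$\delta$.
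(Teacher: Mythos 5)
There is a genuine gap in the third step, and it is not the bookkeeping issue you flag but a structural obstruction. Your plan is to build a connected triangle factor $T$ inside a single block $R[B_{j^*}\cup I_{j^*}]$ of the \emph{reduced graph} and then apply the Embedding Lemma (Lemma~\ref{lem:bl}) to obtain $P^2_{\sqp(n,\delta)}$. But the Embedding Lemma can only yield a squared path of length at most $(1-d)|T|\,n/m$, and $|T|$ itself is bounded by $\tfrac32|B_{j^*}|$. Since $\delta(R)\approx(\delta/n-d-\varepsilon)m$ is strictly below $(\delta/n)m$ and the blocks $B_i$ all have size at least $2\delta(R)-m$, the largest block satisfies $|B_{j^*}|\lesssim(\delta/n-d-\varepsilon)m/r_p(n,\delta)$. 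Hence even in the best case you obtain $(1-d)\cdot\tfrac32|B_{j^*}|\cdot n/m$, which falls short of $\sqp(n,\delta)\approx\tfrac32\lceil\delta/r_p\rceil$ by a margin of order $d n$. No amount of careful cluster bookkeeping closes that gap: the factor $(1-d)$ and the vertices in $V_0$ are lost by the Embedding Lemma itself, and the target $\sqp(n,\delta)$ is an \emph{exact} threshold attained with equality in $G_p(n,\delta)$. Your claim that you could build $T$ with $|T|=\sqp(n,\delta)\,m/n\cdot(1+O(d))$ is therefore not achievable: a single block of $R$ does not contain that many clusters usable in a connected triangle factor.

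This is precisely the difficulty the paper identifies just before stating the Stability Lemma, and it is why the actual proof of Lemma~\ref{lem:ext-like} abandons the Embedding Lemma for the exact bound and works \emph{directly in $G$}. The paper's proof defines $W\supseteq\bigcup I$ as the vertices of $G$ with few neighbours in $\bigcup I$; if $W$ contains two vertex-disjoint edges, then (Fact~\ref{fac:2disjoint}) the blocks become unexpectedly triangle-connected and the Embedding Lemma yields squared paths far longer than required, so the $(1-d)$ loss is harmless there. Otherwise $W$ is essentially independent and $|W|$ is pinned between $n-\delta-11\eta n$ and $n-\delta$; one then extracts, for each $i$, the subset $A_i\subset\bigcup B_i$ of vertices adjacent to almost all of $W$, applies the Dirac-type Lemma~\ref{lem:nicepathcycle} to $G[X_i]$ to get a long simple path with ``bad'' vertices separated, and squares it with $W$ via Lemma~\ref{lem:squaringpath}. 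Because this final construction happens in $G$ rather than in $R$, there is no $(1-d)$ loss and the length $\tfrac32|X_1|\ge\sqp(n,\delta)$ can be attained exactly. Your proposal also omits the $W$-mechanism entirely, so it never addresses the possibility that the coarse structure of $R$ is not faithfully reflected in $G$ (misclassified vertices, stray edges inside $\bigcup I$, or hidden triangle-connections between blocks), which is what Fact~\ref{fac:2disjoint} and the disjoint-edges dichotomy are there to handle.
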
 

It is interesting to notice that, although the two functions $\sqp(n,\delta)$
and $\sqc(n,\delta)$ are different---their jumps as $\delta$ increases occur
at slightly different values---they are similar enough that the Stability
Lemma covers them both. We will only need to distinguish between squared paths
and squared cycles when we examine the near-extremal graphs.

\smallskip

\paragraph{\bf Proof of Theorem~\ref{thm:main}.} 

With this we have all the ingredients for the proof of our main theorem,
which uses the Regularity Lemma (in form of Lemma~\ref{lem:reg}) to
construct a regular partition with reduced graph~$R$ of the host graph~$G$,
the Stability Lemma (Lemma~\ref{StabLem}) to conclude that~$R$ either
contains a big connected triangle factor or is near-extremal, the Embedding
Lemma (Lemma~\ref{lem:bl}) to find long squared paths and cycles in~$G$ in
the first case, and the Extremal Lemma (Lemma~\ref{lem:ext-like}) in the
second case.

\begin{proof}[Proof of Theorem~\ref{thm:main}]
We require our constants to satisfy 
\[\nu\gg\mu\gg\eta\gg d\gg \varepsilon>0\,,\] 
which we choose, given $\nu$, as follows. First, we choose $\mu:=\nu/2$. We then
choose $\eta>0$ to be small enough for both Lemma~\ref{StabLem} and
Lemma~\ref{lem:ext-like}. Now we set $d>0$ to be small enough for
Lemma~\ref{lem:ext-like} and such that $d\le\nu/10$ and $d\le \eta/10$. 
For this~$d$ Lemma~\ref{lem:bl} then produces a constant~$\eps_\subsc{EL}$.
We choose $\eps>0$ to be smaller than
$\min\{\eps_\subsc{EL},\nu/10\}$ and sufficiently small for Lemma~\ref{lem:ext-like}. 
We choose $m_0$ to be sufficiently large to apply Lemma~\ref{StabLem} to
any graph with at least $m_0$ vertices. We then choose $m_\subsc{EL}$ such that
Lemma~\ref{lem:reg} guarantees the existence of an $(\eps,d)$-regular partition
with at least $m_0$ and at most $m_\subsc{EL}$ parts. Finally we choose
$n_0>n_\subsc{EL}$ to be sufficiently large for both Lemma~\ref{lem:bl} and
Lemma~\ref{lem:ext-like}.

Let $n>n_0$ and $\delta\in(n/2+\nu n,n-1]$. Let $G$ be any $n$-vertex graph with
$\delta(G)\geq\delta$.  Observe that it suffices to show that
$P^2_{\sqp(n,\delta)}\subset G$ and that~\ref{thm:main:ii} of
Theorem~\ref{thm:main} holds.  We first apply Lemma~\ref{lem:reg} to $G$ to
obtain an $(\varepsilon,d)$-reduced graph~$R$ on $m_0\le m\le m_\subsc{EL}$
vertices. Let $\delta'=\delta(R)\geq (\delta/n-d-\varepsilon)m>m/2+\mu m$.  Then we apply
Lemma~\ref{StabLem} to~$R$. There are three possibilities.
 
First, we could find that $\CTF(R)\geq 3(2\delta'-m)$. In this case by
Lemma~\ref{lem:bl} we are guaranteed that for every integer~$\ell'$ with
$3\ell'<(1-d)\CTF(R)n/m$ we have $C^2_{3\ell'}\subset G$. By choice of~$d$
and~$\varepsilon$ we have $(1-d)\cdot 3(2\delta'-m)n/m>6\delta-3n-\nu n$. Noting
that $P^2_\ell\subset C^2_\ell$ we conclude that $P^2_{\sqp(n,\delta)}\subset G$
and $C^2_{\ell}\subset G$ for each integer $\ell\leq 6\delta-3n-\nu n$ such
that~$3$ divides~$\ell$, i.e., the second case of
Theorem~\ref{thm:main}\ref{thm:main:ii} holds. 
 
Second, we could find that $\CTF(R)\geq\min(\sqp(m,\delta'+\eta
m),\frac{11m}{20})$ and that every triangle component of $R$ contains a copy of
$K_4$. By Lemma~\ref{lem:bl} we are guaranteed that for every
$\ell\in[6,(1-d)\CTF(R)n/m]\setminus\{5\}$ we have $C^2_\ell\subset G$. By
choice of $\eta$ and $d$ we have
$(1-d)\CTF(R)n/m>\sqp(n,\delta)\ge \sqc(n,\delta)$, so we have $P^2_{\sqp(n,\delta)}\subset G$ and for each integer
$\ell\in[3,\sqc(n,\delta)]\setminus\{5\}$ we have $C^2_{\ell}\subset G$, i.e.,
the first case of Theorem~\ref{thm:main}\ref{thm:main:ii} holds.
 
Third, we could find that $R$ is near-extremal. Then $R$ contains an independent
set on at least $m-\delta'-11\eta m$ vertices whose removal disconnects $R$ into
components of size at most $\frac{19}{10}(2\delta'-m)$, and each
triangle component of $R$ contains a copy of $K_4$. But now $G$ satisfies the
conditions of Lemma~\ref{lem:ext-like}. It follows that $G$ contains
$P^2_{\sqp(n,\delta)}$ and for each $\ell\in[3,\sqc(n,\delta)]\setminus\{5\}$
the graph $G$ contains $C^2_\ell$, i.e., the first case of
Theorem~\ref{thm:main}\ref{thm:main:ii} holds.
\end{proof}

\section{Triangle Components and the proof of Lemma~\ref{StabLem}} 
\label{sec:comp} 
 
In this section we provide a proof of our stability result for connected triangle
factors, Lemma~\ref{StabLem}. 
%
% As indicated, our goal is to show that we either
% find a connected triangle factor which is at least a bit bigger than we can
% expect (cases~\ref{StabLem:1} and~\ref{StabLem:2} of Lemma~\ref{StabLem}) or the
% graph~$G$ under study has a very special, so-called extremal, configuration
% (case~\ref{StabLem:3}). 
%
%All arguments in this proof are of combinatorial nature.
Distinguishing different cases, we analyse the sizes and the structure of the
triangle components in the graph~$G$ under study. Before we give more details
about our strategy and a sketch of the proof, we introduce some additional
definitions and provide a preparatory lemma (Lemma~\ref{lem:cpts}).

Let $G$ be a graph with triangle components $C_1,\ldots,C_r$. The \emph{interior}
$\intr(G)$ of $G$ is the set of vertices of $G$ which are in more than one of the
triangle components. For a component $C_i$, the interior of $C_i$, written
$\intr(C_i)$, is the set of vertices of $C_i$ which are in $\intr(G)$. The
remaining vertices of $C_i$ are called the \emph{exterior} $\bound(C_i)$. That
is, $\bound(C_i)$ is formed by the set of vertices of $C_i$ which are in no other
triangle component of $G$. To give an example, by definition the graph
$G_p(n,\delta)$ has $\rp(n,\delta)$ triangle components; its interior is the
independent set $Y$ (using the notation of the construction of~$G_p(n,\delta)$
on page~\pageref{Gpndelta} in
Section~\ref{sec:Intro}), with the component exteriors being the cliques
$X_1,\ldots,X_r$. 

The following lemma collects some observations about triangle
components.
 
\begin{lemma}\label{lem:cpts} 
  Let~$G$ be an $n$-vertex graph with~$\delta(G)=\delta>n/2$. 
  Then  
  \begin{enumerate}[label=\iabc] 
    \item\label{lem:cpts:a} each triangle component $C$ of $G$ satisfies 
      $|C|>\delta$, 
    % \item\label{lem:cpts:b} each vertex of $G$ is in at most $r(n,\delta)$ 
	    %   triangle components, 
    \item\label{lem:cpts:c} for distinct triangle components $C$, $C'$ we have 
    $e(\bound(C),\bound(C'))=0$, 
    \item\label{lem:cpts:d} for each triangle component $C$, 
      each vertex $u$ of $C$, and $U:=\{v\colon uv\in C\}$, 
      the minimum degree in $G[U]$ is at least $2\delta-n$ and hence 
      $|G[U]|\ge 2\delta-n+1$. 
  \end{enumerate} 
\end{lemma}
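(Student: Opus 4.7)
The plan is to prove (d), then (c), and finally (a), since (a) will build on the previous two parts.

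For (d), I will take any $v \in U = \{w : uw \in C\}$ and show that $\Gamma(u) \cap \Gamma(v) \subseteq U$. Indeed, for any $w \in \Gamma(u) \cap \Gamma(v)$ the triangle $uvw$ contains the edge $uv \in C$, so by the definition of a triangle component all three of its edges---in particular $uw$---lie in $C$, giving $w \in U$. Hence $|\Gamma(v) \cap U| \geq |\Gamma(u) \cap \Gamma(v)| \geq 2\delta - n$, and since $G[U]$ has positive minimum degree, $|U| \geq 2\delta - n + 1$.

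For (c), I will argue by contradiction. If an edge $xy$ has $x \in \bound(C)$ and $y \in \bound(C')$ with $C \neq C'$, then $|\Gamma(x) \cap \Gamma(y)| \geq 2\delta - n > 0$, so there is a triangle $xyz$. Let $C''$ be the triangle component containing $xy$. Then $x \in V(C'')$; but $x \in \bound(C)$ means $x$ is in no component other than $C$, forcing $C'' = C$. Symmetrically, $C'' = C'$, so $C = C'$, a contradiction.

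For (a), the plan is to first show $\bound(C) \neq \emptyset$; the conclusion then follows at once, since for any $v \in \bound(C)$ all edges at $v$ lie in $C$, so $\Gamma(v) \subseteq V(C)$ and $|V(C)| \geq |\Gamma(v)| + 1 \geq \delta + 1$. To show $\bound(C) \neq \emptyset$, I will argue by contradiction, assuming $V(C) \subseteq \intr(G)$. The key observation is that for any $w \in V \setminus V(C)$, the set $\Gamma(w) \cap V(C)$ is an independent set in the spanning subgraph $G^C$ of $G$ formed by the edges of $C$: otherwise two adjacent vertices $u_1, u_2 \in \Gamma(w) \cap V(C)$ with $u_1 u_2 \in C$ would give a triangle $u_1 u_2 w$ forcing $w \in V(C)$, a contradiction. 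Since $\delta(G^C) \geq 2\delta - n + 1$ by (d), we have $\alpha(G^C) \leq |V(C)| - (2\delta - n + 1)$. A double count of edges between $V(C)$ and $V \setminus V(C)$, using $|\Gamma(w) \cap V(C)| \leq \alpha(G^C)$ for $w \notin V(C)$ together with $|\Gamma(v) \cap (V \setminus V(C))| \geq \delta - |V(C)| + 1$ for $v \in V(C)$ under the assumption $|V(C)| \leq \delta$, then quickly yields the required contradiction when $\delta$ is not too close to $n/2$.

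The main obstacle will be closing the argument in the regime where $\delta$ is close to $n/2$, where the crude counting above is insufficient. To handle it, I would additionally use the fact that every $v \in V(C) = \intr(C)$ lies in at least two triangle components, so applying (d) to each gives the strengthened bound $|\Gamma(v)| \geq 2(2\delta - n + 1)$; combining this with a finer analysis of how the partition $\Gamma(v) = S_v^{(C)} \sqcup S_v^{(C')} \sqcup \cdots$ imposed by the components containing $v$ (with no edges between distinct parts) restricts the distribution of edges into $V \setminus V(C)$ should close the remaining gap.
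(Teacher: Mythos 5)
Your proofs of \ref{lem:cpts:c} and \ref{lem:cpts:d} are correct and are essentially the arguments in the paper (for \ref{lem:cpts:c} you do not even need the triangle $xyz$: the edge $xy$ already lies in some unique triangle component $C''$, and $x\in\bound(C)$, $y\in\bound(C')$ force $C''=C$ and $C''=C'$).

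For \ref{lem:cpts:a}, however, the approach has a genuine gap. You plan to first establish $\bound(C)\neq\emptyset$ by a double count under the assumptions $V(C)\subseteq\intr(G)$ and $|V(C)|\le\delta$. Carrying that count through, with $c=|V(C)|$ one obtains
\[
  c(\delta-c+1)\;\le\;(n-c)\bigl(c-(2\delta-n+1)\bigr),
\]
which simplifies to $c\ge n(2\delta-n+1)/\delta$. This contradicts $c\le\delta$ exactly when $n(2\delta-n+1)>\delta^2$, i.e.\ when $(n-\delta)^2<n$, i.e.\ $\delta>n-\sqrt n$. So the ``crude counting'' does not merely fail near $\delta\approx n/2$: it fails throughout the entire range $(\frac12 n,\frac23 n]$ relevant to the paper, and only kicks in for $\delta$ extremely close to $n$. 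The sketch you offer to close this gap does not obviously help either, because the ``strengthened'' degree bound $|\Gamma(v)|\ge 2(2\delta-n+1)$ for $v\in\intr(G)$ is \emph{weaker} than $|\Gamma(v)|\ge\delta$ whenever $\delta\le(2n-2)/3$, so it provides no new information in the regime at issue. There is also a structural inefficiency in your plan: even if the count went through, you would only have shown $\bound(C)\neq\emptyset$, a statement the lemma does not ask for and the paper never proves.

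The paper's argument for \ref{lem:cpts:a} avoids the boundary set entirely and is much shorter. Let $M$ be the vertex set of a maximal clique all of whose edges lie in $C$; since $\delta>n/2$ every two vertices have a common neighbour, so $|M|\ge 3$. Any $x\notin V(C)$ has at most one neighbour in $M$ (two neighbours $u,v\in M$ would give a triangle $xuv$ containing the $C$-edge $uv$, putting $x$ in $V(C)$), and any $x\in V(C)$ has at most $|M|-1$ neighbours in $M$ (else $M\cup\{x\}$ would be a larger clique with all edges in $C$, contradicting maximality). Double counting edges incident to $M$ gives
\[
  |M|\delta\;\le\;\sum_{m\in M}d(m)\;\le\;(|M|-1)\,|V(C)|+\bigl(n-|V(C)|\bigr)
  \;=\;(|M|-2)\,|V(C)|+n,
\]
hence $|V(C)|\ge(|M|\delta-n)/(|M|-2)>\delta$, where the last inequality is equivalent to $2\delta>n$. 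This clean maximal-clique count is the idea your proof is missing.
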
 
\begin{proof} 
  To see~\ref{lem:cpts:a} let $M$ be the vertices of a maximal clique in $C$ (clearly $|M|\ge 
  3$). If $u$ and $v$ are in $M$, and $x$ is a common neighbour 
  of $u$ and~$v$, then $x$ is also in $C$. Thus vertices of $G\setminus C$ are 
  adjacent to at most $1$ vertex of $M$ and vertices of $C$ are adjacent to 
  at most $|M|-1$ vertices of $M$, by maximality of~$M$. This gives the inequality 
  \[|M|\delta\leq \sum_{m\in M}d(m)\leq \sum_{x\in C}(|M|-1)+\sum_{x\notin C}1\]  
  and hence $|M|\delta-n\leq (|M|-2)|C|$. Since $n<2\delta$ we have 
  $|C|>\delta$ as required. 
  
  Since $\delta>n/2$, we have that
  $\Gamma(u,u')\neq\emptyset$ for any two vertices $u$ and~$u'$. Now, if
  $u\in\bound(C)$, $u'\in\bound(C')$, $x\in\Gamma(u,u')$, and $uu'$ was an edge, then $uu'x$
  would form a triangle. Then $u$ and $u'$
  would be together in some triangle component $C''$, contradicting the fact that they
  are in the exterior. Therefore, the assertion~\ref{lem:cpts:c} follows. 

  Moreover, for an edge $uv$ of $C$ we have $\Gamma(u,v)\subset C$ as $C$ is a
  triangle component. Since $|\Gamma(u,v)|\ge2\delta-n$ we get~\ref{lem:cpts:d}.
  % For~\ref{lem:cpts:b} let $r=r(n,\delta)$ and suppose that some vertex $u$ 
  % of~$G$ is in at least $r+1$ triangle components. We partition the vertices 
  % in 
  % $\Gamma(u)$ according to the triangle component containing their edge with 
  % $u$, and let $Y$ be the smallest resulting part. Now 
  % $|Y|\leq\lfloor\delta/(r+1)\rfloor<\lfloor\delta/r\rfloor<2\delta-n$ by 
  % definition of $r$, and 
  % vertices $y$ of $Y$ are not adjacent to any vertex of $\Gamma(u)\setminus 
  % Y$. 
  % It follows that $d(y)<n-1-(\delta-2\delta-n)=\delta-1$, 
  % giving a contradiction. 
\end{proof} 
 
Now let us sketch the proof of Lemma~\ref{StabLem}.
Lemma~\ref{lem:cpts}\ref{lem:cpts:a} states that triangle components cannot be
too small. However, it is not solely the size of the triangle components we are
interested in: we want to find a triangle component that contains many vertex
disjoint triangles. At this point, Lemma~\ref{lem:cpts}\ref{lem:cpts:d} comes
into play. It asserts that certain spots in a triangle component induce a graph
with minimum degree $2\delta-n$. In the proof of Lemma~\ref{StabLem} we shall
usually (i.e., for many values of~$\delta$) use this fact in order to find a big
matching~$M$ in such spots (Proposition~\ref{prop:match}\ref{prop:match:a} below
asserts that this is possible). Clearly all edges in such a matching are
triangle connected and hence it will remain to extend~$M$ to a set of vertex
disjoint triangles. For this purpose we will analyse the size of the common
neighbourhood $\Gamma(u,v)$ of an edge $uv$ in~$M$. We will usually find that
$\Gamma(u,v)$ is so big that a simple greedy strategy allows us to construct the
triangles. For estimating $\Gamma(u,v)$ we will often use the following
technique: We find a large set $X$ such that neither $u$ nor $v$ has neighbours
in $X$. This implies $|\Gamma(u,v)|\ge 2\delta-(n-|X|)$. Observe that
Lemma~\ref{lem:cpts}\ref{lem:cpts:c} implies that $\bound(C)$ can serve as $X$
if both $u$, $v\in\bound(C')$ for some triangle components $C$ and $C'$.
 
The strategy we just described works for most values of $\delta$ below $\frac35
n$ (we describe the exceptions below). For $\delta\ge\frac35 n$ however, the
greedy type argument fails, the reason being that we usually bound the common
neighbourhood of an edge used in the argument above by $4\delta-2n$.  But for
$\delta\ge\frac35 n$ we might have $\sqp(n,\delta)>4\delta-2n$ (see
Figure~\ref{fig:sqpplot}). We solve this problem by using a different strategy
in this range of $\delta$. We will still start with a big connected matching~$M$
as before, but use a Hall-type argument to extend~$M$ to a triangle factor
$T$. More precisely, we find $M$ in the exterior of some triangle component and
then consider for each edge $uv$ of $M$ all common neighbours of $uv$ in
$\intr(G)$. The Hall-type argument then permits us to find distinct extensions
for the edges of $M$. To make this argument work we use the fact that in this
range of $\delta$ the set $\intr(G)$ is an independent set.
 
We indicated earlier that there are some exceptional values of $\delta$ that
require special treatment: namely $\delta$ close to $\frac35 n$ and $\frac47
n$. Observe that in both ranges the number of triangle components of
$G_p(n,\delta)$ changes (from $2$ to $3$ for $\frac35 n$, and from $3$ to $4$
for $\frac47n$) and thus the value $\sqp(n,\delta)$ as a function in $\delta$
jumps (see Figure~\ref{fig:sqpplot}). Roughly speaking, the reason that these
two ranges need to be treated separately is that again $\sqp(n,\delta)$ is not
substantially smaller than $4\delta-2n$ here, but we also do not know now that
$\intr(G)$ is an independent set. For dealing with these values of $\delta$ we
will use a somewhat technical case analysis which we provide at the end of this
section (as proof of Fact~\ref{fac:StabLem:int:special}).
 
As explained above, we will apply the following simple observations about
matchings in graphs of given minimum degree. 
 
\pagebreak[4]
\begin{proposition}\label{prop:match} \mbox{} \\[-5mm]
  \begin{enumerate}[label=\iabc]
  \item\label{prop:match:a} Let $G=(X,E)$ be a graph with minimum
    degree~$\delta$. Then~$G$ has a matching covering
    $2\min(\delta,\lfloor|X|/2\rfloor)$ vertices.
  \item\label{prop:match:b} Let $G=(A\dcup B,E)$ be a bipartite graph 
    with parts $A$ and $B$, 
    such that every vertex in $A$ has degree at least $a$ and every vertex in $B$ has
    degree at least $b$. Then~$G$ has a matching covering $2\min(a+b,|A|,|B|)$ vertices.
  \end{enumerate}
\end{proposition}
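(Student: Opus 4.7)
The plan is to run the classical augmenting-path argument: take a maximum matching $M$, assume it is smaller than the claimed bound, locate two unmatched vertices, and use the absence of a short augmenting path between them to bound $|\Gamma(u)|+|\Gamma(u')|$ from above by a multiple of $|M|$.

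For part~\ref{prop:match:a}, I would let $M$ be a maximum matching in $G$. If $|V(M)|\ge 2\lfloor |X|/2\rfloor$ we are done, so assume the reverse. Because $|V(M)|$ is even, this inequality actually forces $|X\setminus V(M)|\ge 2$, so there exist two distinct unmatched vertices $u,u'$. Maximality of $M$ gives $\Gamma(u),\Gamma(u')\subseteq V(M)$, and rules out any length-$3$ augmenting path $u\text{-}v\text{-}v'\text{-}u'$ with $vv'\in M$. These two observations together show that the set of matching edges meeting $\Gamma(u)$ is disjoint from the set of matching edges meeting $\Gamma(u')$, which yields $2\delta\le|\Gamma(u)|+|\Gamma(u')|\le 2|M|$, hence $|V(M)|\ge 2\delta$, as required.

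Part~\ref{prop:match:b} is the direct bipartite translation. Take a maximum matching $M$ and assume $|M|<\min(a+b,|A|,|B|)$; then $|M|<|A|$ and $|M|<|B|$ supply unmatched vertices $u\in A$ and $u'\in B$. Bipartiteness and maximality give $\Gamma(u)\subseteq V(M)\cap B$ and $\Gamma(u')\subseteq V(M)\cap A$, and forbidding the augmenting path $u\text{-}b\text{-}a\text{-}u'$ (with $ab\in M$, $b\in B$, $a\in A$) forces the $A$-endpoints of matching edges hitting $\Gamma(u)$ to lie outside $\Gamma(u')$. Summing the bounds across matching edges yields $a+b\le|\Gamma(u)|+|\Gamma(u')|\le|M|$, contradicting our assumption.

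No step poses a serious obstacle. The only subtlety worth flagging is the parity argument in~\ref{prop:match:a} ensuring a \emph{second} unmatched vertex: it is precisely the presence of two unmatched vertices that enables the length-$3$ augmenting-path obstruction and hence the factor-of-two improvement over the weaker bound $\delta\le|V(M)|$ that a single unmatched vertex would give.
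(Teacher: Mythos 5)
Your overall strategy---take a maximum matching $M$, exhibit two unmatched vertices, and use the absence of a short augmenting path to bound $|\Gamma(u)|+|\Gamma(u')|$ by a multiple of $|M|$---is exactly what the paper does, and your part~\ref{prop:match:b} is correct as written. In part~\ref{prop:match:a}, however, the intermediate claim that ``the set of matching edges meeting $\Gamma(u)$ is disjoint from the set of matching edges meeting $\Gamma(u')$'' is false, and the inference $|\Gamma(u)|+|\Gamma(u')|\le 2|M|$ does not follow from your stated reasoning. A matching edge $vv'$ with $v\in\Gamma(u)\cap\Gamma(u')$ but $v'\notin\Gamma(u)\cup\Gamma(u')$ belongs to both sets, yet neither $u\text{-}v\text{-}v'\text{-}u'$ nor $u\text{-}v'\text{-}v\text{-}u'$ is an augmenting path, so maximality of $M$ gives no contradiction. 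The bipartite analogue in~\ref{prop:match:b} escapes this because $\Gamma(u)\subseteq B$ and $\Gamma(u')\subseteq A$ live on opposite sides of each matching edge; in the general graph there is no such separation.

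The fix is what the paper actually uses: you do not need disjointness of edge sets, only the per-edge bound $|e\cap\Gamma(u)|+|e\cap\Gamma(u')|\le 2$ for every $e\in M$. If this sum were at least $3$ for $e=vv'$, then one endpoint of $e$ would lie in $\Gamma(u)$ and the other in $\Gamma(u')$ (after checking the two cases), producing the forbidden augmenting path. Summing over $M$ then gives $2\delta\le|\Gamma(u)|+|\Gamma(u')|\le 2|M|$, the inequality you want. Your target inequality is true; it is the justification via disjointness that breaks down, so replace that sentence with the per-edge count and the argument is sound.
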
 
\begin{proof}
  We first prove~\ref{prop:match:a}. Let $M$ be a maximum matching in $G$, and assume that $M$ 
  contains less than $\min(\delta,\lfloor|X|/2\rfloor)$ edges. In particular,
  there are two vertices $x,y\in X$ not covered by $M$. Clearly, all neighbours
  of $x$ and $y$ are covered by~$M$. 
  
  We claim that there is an edge $uv$ in $M$ with
  $xu,yv\in E$. Indeed, suppose that this is not the case. 
  Then  $|e\cap \Gamma(x)|+|e\cap \Gamma(y)|\le 2$ for each $e\in M$. We
  therefore have
   \[\delta+\delta\le |\Gamma(x)|+|\Gamma(y)|= \sum_{e\in M}(|e\cap
   \Gamma(x)|+|e\cap \Gamma(y)|)\le 2|M|\,,\]
  contradicting the fact that $\delta>|M|$.
  
  Now, let $uv\in M$ be an edge as in  the claim above. Since $xu,yu\in E$ we
  get that $x,u,v,y$ is an
  $M$-augmenting path, a contradiction.
  
  Next we prove~\ref{prop:match:b}. Let $M$ be a maximum matching in $G$. We are
  done unless there are vertices $u\in A$ and $v\in B$ not contained in
  $M$. There cannot be an edge $xy\in M$ such that $uy$ and $xv$ are edges of
  $G$ by maximality of $M$, since then $u,y,x,v$ was an $M$-augmenting path. But
  $u$ has at least $a$ neighbours in $V(M)\cap B$, and $v$ at least $b$
  neighours in $V(M)\cap A$, so there must be at least $a+b$ edges in $M$.
\end{proof} 
 
Before turning to the proof of Lemma~\ref{StabLem} let us quickly collect some 
analytical data about $\sqp(n,\delta)$ and $\rp(n,\delta)=:r$. It is not
difficult to check that
\begin{equation}\label{eq:r} 
\begin{split}
  \frac{(r+1)n-r}{2(r+1)-1}&\le\delta<\frac{rn-r+1}{2r-1} \quad\text{and}\quad \\
  \frac{n-\delta}{2\delta-n+1}&\le r<\frac{\delta+1}{2\delta-n+1}\,. 
\end{split}
\end{equation} 
% For the proof of Lemma~\ref{StabLem} it will be convenient to have the 
% following analogues of $r(n,\delta)$ and $\sqp(n,\delta)$ which are 
% independent of $n$. For~$\gamma\in(\frac12,\frac23)$ let $r(\gamma)$ be the 
% largest integer $r$ such that $1>2\gamma-\gamma/r$, set 
% $\sqp(\gamma):=3\gamma/(2r(\gamma))$, and observe that these definitions imply 
For the proof of Lemma~\ref{StabLem} it will be useful to note in addition that
given $\mu>0$, for every $0<\eta<\eta_0=\eta_0(\mu)$, there is $n_1=n_1(\eta)$
such that the following holds for all $n\geq n_1$.  For all
$\delta,\delta'>\frac{n}{2}+\mu n$, where~$\delta$ is such that
$\sqp(n,\delta+\eta n)\le\frac{11}{20}n$, and where~$\delta'$ is such that we
have $\rp(n,\delta')\ge 3$ and either $\rp(n,\delta')\ge 5$ or
$\rp(n,\delta')=\rp(n,\delta'+\eta n)$, we have
\begin{equation}\label{eq:sqpa} 
  \sqp(n,\delta+\eta n) 
    \le\frac32\min\Big(\frac{\delta}{\rp(n,\delta+\eta n)-1}-2, \,
      \frac{\delta+3\eta n}{\rp(n,\delta+\eta n)}-2\Big),
\end{equation}
\begin{equation}\label{eq:sqpb}
 \begin{split}
  \sqp(n,\delta+\eta n) % \le\sqp(\delta+2\eta)\cdot n 
    & \le \tfrac{19}{20}\cdot 3(2\delta-n)-2
    \le 6\delta-3 n-100\eta n,
    % \min\left(6\gamma-3, \tfrac92\gamma-2\right)n, 
  \quad\text{and}\quad \\
  \sqp(n,\delta'+\eta n) % \le\sqp(\delta'+2\eta)\cdot n 
    &\le4\delta'-2 n, 
 \end{split}
\end{equation} 
which follows immediately from the definition of $\sqp(n,\delta)$
in~\eqref{eq:sqpc} (see also Figure~\ref{fig:sqpplot}).
  
\begin{proof}[Proof of Lemma~\ref{StabLem}] 
  Given $\mu$ and any $0<\eta<\min(\frac{1}{1000},\eta_0(\mu),2\mu^2/3)$, where
  $\eta_0(\mu)$ is as above~\eqref{eq:sqpa}, let $n_0:=\max(n_1(\eta),2/\eta)$
  with $n_1(\eta)$ as above~\eqref{eq:sqpa}.  Let $n\ge n_0$. This in particular
  means that we may assume the inequalities~\eqref{eq:sqpa} and~\eqref{eq:sqpb}
  in what follows. Define %
  $\gamma:=\delta/n$, and $r:=\rp(n,\delta)$ and $r':=\rp(n,\delta+\eta n)$.
 
  If $G$ has only one triangle component then Theorem~\ref{thm:CorHaj} guarantees 
  that $\CTF(G)\ge 6\delta-3n$  
  % \ge\min(\sqp(n,\delta+\eta n),\frac{11}{20}n)$ by~\eqref{eq:sqp}  
  and so we are in Case~\ref{StabLem:1}. Thus we may assume in 
  the following that $G$ has at least two triangle
  components. Then Lemma~\ref{lem:cpts}\ref{lem:cpts:a}
  implies that $\intr(C)\not=\emptyset$ for any
  triangle component~$C$.
 
  Suppose that $C$ is a triangle component of $G$ which does not contain a copy 
  of $K_4$. Let $u$ be a vertex of $C$, and $U:=\{v\colon uv\in C\}$. By 
  Lemma~\ref{lem:cpts}\ref{lem:cpts:d} %the vertex $u$ does exist, and
  we have $\delta(G[U])\geq 
  2\delta-n$. Because $C$ contains no copy of $K_4$, $U$ contains no triangle. 
  By Tur\'an's theorem we have $|U|\geq 2(2\delta-n)$, and so by
  Proposition~\ref{prop:match}\ref{prop:match:a} the set $U$ contains a
  matching $M$ with $2\delta-n$ edges. Finally we choose greedily for each $e\in M$ a distinct
  vertex $v\in V(G)$ such that $ev$ is a triangle. Since $U$ is triangle free
  all these vertices must lie outside $U$, and since $|\Gamma(e)|\ge
  2\delta-n$ we cannot fail to find distinct vertices for each edge. This
  yields a set $T$ of $2\delta-n$ vertex-disjoint triangles which are all in
  $C$. So $\CTF(G)\geq 6\delta-3n$ and we are in case~\ref{StabLem:1}.
  Henceforth we assume that every triangle component of~$G$ contains a copy of
  $K_4$.
 
  We continue by considering the case $\frac{3n-2}{5}\leq\delta<\frac{2n-1}{3}$.
  The following observation readily implies the lemma in this range, as we will 
  see in Fact~\ref{fac:StabLem:35}. 
   
  \begin{fact} 
  \label{fac:StabLem:around35} 
    If 
    $\delta(G)\ge(\frac35-2\eta)n$, 
    $G$ has exactly $2$ triangle components,  
    $\intr(G)$ is independent, 
    and either $|\intr(G)|<n-\delta-11\eta n$ or the exterior $X$ of the  
    triangle component with most vertices satisfies
    $|X|\ge\frac{19}{10}(2\delta-n)$, then $\CTF(G)\ge\min(\sqp(n,\delta+\eta n),\frac{11}{20}n)$. 
  \end{fact}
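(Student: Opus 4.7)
The approach is to construct a large connected triangle factor inside the larger of the two triangle components. Label the components $C_1,C_2$, write $x_i:=|\bound(C_i)|$ and $y:=|\intr(G)|$, and assume WLOG $x_1\le x_2$. By Lemma~\ref{lem:cpts}\ref{lem:cpts:c} there are no edges of $G$ between $\bound(C_1)$ and $\bound(C_2)$, so every vertex in $\bound(C_i)$ has at least $\delta-y$ neighbours inside $\bound(C_i)$ and $x_i\ge\delta-y+1$; in particular $y\ge 2\delta-n+2$. Since $\intr(G)$ is independent, every triangle of $C_2$ has two vertices in $\bound(C_2)$ and its third vertex either in $\bound(C_2)$ or in $\intr(G)$, so a vertex count shows that any triangle factor inside $C_2$ has at most $\min\bigl((x_2+y)/3,\,x_2/2\bigr)$ triangles. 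One further key bound: for any edge $uv$ with both endpoints in $\bound(C_2)$ we have $|\Gamma(u,v)|\ge 2\delta-n+x_1$, because $\Gamma(u)\cup\Gamma(v)$ is disjoint from $\bound(C_1)$.

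Apply Proposition~\ref{prop:match}\ref{prop:match:a} to $G[\bound(C_2)]$ to obtain a matching $M$ of size $s:=\min(\delta-y,\,\lfloor x_2/2\rfloor)$, whose edges lie in $C_2$. I then extend $M$ to a triangle factor by choosing for each $uv\in M$ a distinct $w_{uv}\in\Gamma(u,v)\setminus V(M)$, processed greedily with a preference for $w_{uv}\in\intr(G)$ (which never lies in $V(M)\subset\bound(C_2)$) and a fall-back to $\bound(C_2)\setminus V(M)$ when the $\intr(G)$-supply runs out. A Hall-style count using $|\Gamma(u,v)|\ge 2\delta-n+x_1$ verifies that this is possible, provided the vertex budget permits it; if $3s>x_2+y$, one trims $M$ to a sub-matching of size $\lfloor(x_2+y)/3\rfloor$ and extends that, still producing a connected triangle factor of the corresponding size. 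Either way, $\CTF(G)\ge\min(3s,\,x_2+y)-O(1)$.

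It remains to verify that $\min(3s,\,x_2+y)$ meets $\min(\sqp(n,\delta+\eta n),\,\tfrac{11n}{20})$. If $s=\delta-y$, Case~(a) of the hypothesis gives $y<n-\delta-11\eta n$ and hence $3s>6\delta-3n+33\eta n$, which exceeds $\sqp(n,\delta+\eta n)$ by~\eqref{eq:sqpb}. If $s=\lfloor x_2/2\rfloor$, Case~(a) yields $x_2>(\delta+11\eta n)/2$, so that~\eqref{eq:sqpa} with $r'=2$ gives $3x_2/2\ge\sqp(n,\delta+\eta n)$, while Case~(b) yields $x_2\ge\tfrac{19}{10}(2\delta-n)$ and thus $3x_2/2\ge\tfrac{57}{20}(2\delta-n)\ge\tfrac{11n}{20}$ in the range $\delta\ge(\tfrac{3}{5}-2\eta)n$ for small $\eta$. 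If the bottleneck is $x_2+y$ (so $y<x_2/2$), then $x_2+y=n-x_1\ge(n+y)/2\ge\delta>\tfrac{11n}{20}$ via $x_1\le(n-y)/2$ and $y\ge 2\delta-n$. The principal obstacle is the Hall-style verification of the extension step together with the book-keeping in the Case~(b) sub-case with large $y$, where the slack $x_2\ge\tfrac{19}{10}(2\delta-n)$ from the hypothesis is used essentially to ensure enough room in $\bound(C_2)$ for the auxiliary triangles.
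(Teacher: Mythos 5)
Your opening moves mirror the paper's: you place a matching $M$ of size $s=\min(\delta-y,\lfloor x_2/2\rfloor)$ in the larger exterior $\bound(C_2)$ and note that its edges are triangle-connected. The divergence, and the gap, is in the extension step. You assert that a ``Hall-style count using $|\Gamma(u,v)|\ge 2\delta-n+x_1$'' yields a connected triangle factor of size $\min(3s,\,x_2+y)-O(1)$, but this count is never written out, and the single bound you cite is not strong enough. That bound controls only the degree of each edge of $M$ in the auxiliary bipartite graph (edges of $M$ versus potential centres), i.e.\ it gives $a'\ge 2\delta-n+x_1-(2s-2)$ on the $M$-side; Hall's condition from a one-sided degree bound alone gives a matching of size only about $\min(a',s)$, and one can make $a'<s$ inside the range $\delta\in[(\tfrac35-2\eta)n,\tfrac{3n-2}{5})$ that the Fact must cover (there the naive greedy genuinely fails). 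The ingredient you are missing is the paper's second degree bound $b=\delta-n+y+s$ on the $\intr(G)$-side: each $w\in\intr(G)$ is a common neighbour of at least $b$ edges of $M$, and Proposition~\ref{prop:match}\ref{prop:match:b} then returns a matching of size $\min(a+b,|M|,|\intr(G)|)$. Summing $a+b=3\delta-n-2x_2+s$ and using $x_2<n-\delta$ is what gives $a+b\ge|M|-10\eta n$; the loss is $O(\eta n)$, not $O(1)$, and that $O(\eta n)$ slack is precisely what the $100\eta n$ margin in~\eqref{eq:sqpb} absorbs. Your ``fall-back to $\bound(C_2)\setminus V(M)$'' does not rescue the argument: when $s=\lfloor x_2/2\rfloor$ this set has at most one element, and when $s=\delta-y$ adding those centres to the $B$-side of the bipartite graph worsens the $B$-side minimum degree, so the two-sided bound does not improve.

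There is also a smaller accounting gap in the $s=\lfloor x_2/2\rfloor$, Case~(a) branch: you invoke~\eqref{eq:sqpa} with $r'=2$, but that display is conditional on $\sqp(n,\delta+\eta n)\le\frac{11n}{20}$; the complementary sub-case $\sqp(n,\delta+\eta n)>\frac{11n}{20}$ (which does occur near $\delta\approx\frac23 n$) must be treated separately, as the paper does by deriving a contradiction from $3\lfloor|X|/2\rfloor<\frac{11n}{20}$ together with $|\intr(G)|\ge n-2|X|$. So the skeleton is right, but the critical Hall step needs the two-sided degree count from Proposition~\ref{prop:match}\ref{prop:match:b}, and the $O(1)$ error term should be $O(\eta n)$ with the final inequalities adjusted accordingly.
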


  \begin{factproof}[Proof of Fact~\ref{fac:StabLem:around35}]
    First, by Lemma~\ref{lem:cpts}\ref{lem:cpts:c} a vertex $x\in
    X$ cannot have neighbours in the exterior of the other triangle component, so
    $\Gamma(x)\subset X\cup\intr(G)$. Thus $\delta(G[X])\ge\delta-|\intr(G)|$,
    which by Proposition~\ref{prop:match}\ref{prop:match:a} means that there is
    a matching $M$ in $G[X]$ with
    \begin{equation}\label{eq:fact1:M}
      |M|=\min(\delta-|\intr(G)|,\lfloor|X|/2\rfloor) 
    \end{equation}
    edges.

    We aim to pair off edges of $M$ with vertices of $\intr(G)$ to form a
    sufficiently large number of vertex-disjoint triangles. To see that a
    triangle factor resulting from this process will be connected, observe that
    all edges of $M$ are in $X$, and since $X$ is a triangle component exterior,
    the edges of $M$ are triangle connected. To form triangles from edges of $M$
    and vertices of $\intr(G)$, we introduce an auxiliary bipartite graph $H$
    with vertex set $M\dcup\intr(G)$, where $uv\in M$ is adjacent in $H$ to
    $w\in\intr(G)$ iff $uvw$ is a triangle of $G$. Every vertex of $X$ has at
    least $\delta-|X|$ neighbours in $\intr(G)$, and so every edge of $M$ has at
    least $a:=2(\delta-|X|)-|\intr(G)|$ common neighbours in $\intr(G)$. At the
    same time, since $\intr(G)$ is independent, every vertex of $\intr(G)$ has
    at least $\delta-(n-|\intr(G)|-|X|)$ neighbours in $X$, of which all but
    $|X|-2|M|$ must be in $M$. So every vertex of $\intr(G)$ must have at least
    \[b:=\delta-(n-|\intr(G)|-|X|)-(|X|-2|M|)-|M|=\delta-n+|\intr(G)|+|M|\]
    edges of $M$ in its neighbourhood. By
    Proposition~\ref{prop:match}\ref{prop:match:b} there is a matching in $H$ on
    at least $\min(a+b,|M|,|\intr(G)|)$ edges, and hence a connected triangle
    factor in $G$ with so many triangles. Observe that
    \begin{equation}\label{eq:fact1:aplusb}
    \begin{split}
      a+b &= 2\delta-2|X|-|\intr(G)|+\delta-n+|\intr(G)|+|M| \\
      &= 3\delta-n-2|X|+|M|\,.
    \end{split}
    \end{equation}

    Since there are two triangle components in~$G$, there is a vertex~$u$ in a triangle
    component exterior which is not $X$. Therefore~$u$ has no neighbour in
    $X$, so $|X|<n-\delta$. Since $\delta\ge(\frac35-2\eta)n$,
    by~\eqref{eq:fact1:aplusb} we have 
    \begin{equation}\label{eq:fact1:aplusb2}
      a+b> |M|-10\eta n \,.
    \end{equation}  
    Furthermore, 
    \begin{equation}\label{eq:fact1:aplusb3}
      \quad\text{if}\quad
     |X|\le(\tfrac{2}{5}-3\eta)n\,, 
     \quad\text{then}\quad
     a+b\ge |M| \,.
   \end{equation}
    By Lemma~\ref{lem:cpts}\ref{lem:cpts:a} we have
    $|\intr(G)|\ge2\delta-n\ge\frac{n}{5}-4\eta n$. Since
    $\eta\le\frac{1}{1000}$ we have
    \[3|\intr(G)|\ge \frac{3n}{5}-12\eta n>\frac{11n}{20}\,.\] Thus we have
    $\CTF(G)\ge\frac{11n}{20}$ if we find a matching in $H$ covering
    $\intr(G)$. It remains, then, to check that 
    % if either
    % $|\intr(G)|<n-\delta-11\eta n$ or $|X|\ge\frac{19}{10}(2\delta-n)$, 
    we have
    \begin{equation}\label{eq:fact1:min}
      3\min(a+b,|M|)\ge\min(\sqp(n,\delta+\eta n),\frac{11}{20}n).
    \end{equation}
    We distinguish two cases.

    {\sl Case 1:} $a+b<|M|$. By~\eqref{eq:fact1:aplusb3} this 
    forces $|X|>(\frac{2}{5}-3\eta)n$. 
    % Then $H$ contains a matching of size $a+b$. 
    Since we have $|M|=\min(\delta-|\intr(G)|,\lfloor|X|/2\rfloor)$ by~\eqref{eq:fact1:M},
    there are two possibilities.  If $|M|=\lfloor|X|/2\rfloor$ then we
    have 
    \[
    a+b
    \geByRef{eq:fact1:aplusb2}
    \Big\lfloor\frac{|X|}{2}\Big\rfloor-10\eta n>\frac{n}{5}-12\eta
    n>\frac{11n}{60}\,,\]
    which proves~\eqref{eq:fact1:min} in this subcase. If, on the other hand,
    $|M|=\delta-|\intr(G)|$, then we use that $\intr(G)$ is independent, which
    implies $\intr(G)\le n-\delta$ and thus
    \begin{equation*}\begin{split}
      a+b
      & \geByRef{eq:fact1:aplusb2} |M|-10\eta n
      =\delta-|\intr(G)|-10\eta n
      \ge 2\delta-n-10\eta n \\
      & \geByRef{eq:sqpb}\tfrac13\sqp(n,\delta+\eta n)
      \,,
    \end{split}\end{equation*}
    which proves~\eqref{eq:fact1:min} in this subcase.

    {\sl Case 2:} $a+b\ge |M|$. In this case, $H$ contains a matching of
    size~$|M|$, so we have $\CTF(G)\ge 3|M|=3\min(\delta-|\intr(G)|,\lfloor|X|/2\rfloor)$.  Again there are two
    possibilities, depending on $|M|$. If $|M|=\delta-|\intr(G)|$, we are done by
    \eqref{eq:sqpb} exactly as before. If, on the other hand,
    $|M|=\lfloor|X|/2\rfloor$, then~\eqref{eq:fact1:min} holds (and hence we are
    done) unless
    \begin{equation}\label{eq:StabLem:X}
      3\lfloor\tfrac{|X|}{2}\rfloor <\min\big(\sqp(n,\delta+\eta
      n),\tfrac{11}{20}n\big)\,.
    \end{equation}
    We now assume~\eqref{eq:StabLem:X} in order to derive a contradiction, and
    make a final subcase distinction. 

    First assume that $\sqp(n,\delta+\eta
    n)<\frac{11}{20}n$. Then $r'\ge 2$ and hence~\eqref{eq:StabLem:X}
    and~\eqref{eq:sqpc} imply 
    % implies that $|X|$ is less than
    % the size of the largest exterior set of $G_p(n,\delta+\eta n)$, which is of
    % size at most $\frac12(\delta+\eta n)+1$.  Consequently we have
    \[
      |X|<\tfrac12(\delta+\eta n)+3
      <\tfrac{51}{100}\delta
      <\tfrac{19}{10}(2\delta-n)\,,
    \] 
    because
    $\delta\ge(\frac35-2\eta)n$ and $\eta\le\frac{1}{1000}$. Furthermore,
    since~$G$ has two triangle components whose exterior is of size at most~$X$
    by assumption
    % all component exteriors of $G$ are smaller than those of $G_p(n,\delta+\eta
    % n)$ 
    we have 
    $|\intr(G)|>n-2|X|=n-\delta-\eta n-6$, a contradiction to the
    the conditions of Fact~\ref{fac:StabLem:around35}. 

    Now assume that $\sqp(n,\delta+\eta n)\ge\frac{11}{20}n$. Then we have
    $\delta>(\frac23-2\eta)n$. By Lemma~\ref{lem:cpts}\ref{lem:cpts:a}
    we have $|X|\le n-\delta<(\frac13+2\eta)n$ and so
    $|X|<\frac{19}{10}(2\delta-n)$. Further $|\intr(G)|\ge n-2|X|\ge 2\delta-n>\frac{n}{3}-4\eta n>n-\delta-11\eta
    n$, which again contradicts the conditions of Fact~\ref{fac:StabLem:around35}.
  \end{factproof}
  
  \begin{fact}\label{fac:StabLem:35} 
    Lemma~\ref{StabLem} is true for $\frac{3n-2}{5}\leq\delta<\frac{2n-1}{3}$.  
  \end{fact}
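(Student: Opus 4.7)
The plan is to separate Fact~\ref{fac:StabLem:35} into two main cases according to whether $\intr(G)$ is independent, reducing the first to Fact~\ref{fac:StabLem:around35} and deferring the second (the exceptional regime near $\delta=\tfrac{3n}{5}$) to the technical case analysis of Fact~\ref{fac:StabLem:int:special}. As a first step I would dispense with the cases already handled earlier in the proof of Lemma~\ref{StabLem}: if $G$ has a single triangle component, Theorem~\ref{thm:CorHaj} gives $\CTF(G)\ge 6\delta-3n$ and puts us in case~\ref{StabLem:1}; if some triangle component lacks a $K_4$, the earlier argument also places us in case~\ref{StabLem:1}. So I may assume throughout that $G$ has $k\ge 2$ triangle components, each containing a $K_4$.

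Suppose first that $\intr(G)$ is independent. Every $v\in\intr(G)$ has all its neighbours outside $\intr(G)$, so $|\intr(G)|\le n-\delta$. Every vertex of $G$ lies in at most $k$ triangle components, so by Lemma~\ref{lem:cpts}\ref{lem:cpts:a} together with a count of vertex--component incidences,
\[
k(\delta+1)\;\le\; \sum_i |C_i| \;\le\; n + (k-1)|\intr(G)| \;\le\; n + (k-1)(n-\delta)\,,
\]
which rearranges to $k\le\delta/(2\delta+1-n)$. For $\delta\ge(3n-2)/5$ this bound is strictly less than $3$, forcing $k=2$. All hypotheses of Fact~\ref{fac:StabLem:around35} are then in place. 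If in addition either $|\intr(G)|<n-\delta-11\eta n$, or the larger exterior $X$ satisfies $|X|\ge\tfrac{19}{10}(2\delta-n)$, then Fact~\ref{fac:StabLem:around35} directly yields $\CTF(G)\ge\min(\sqp(n,\delta+\eta n),\tfrac{11}{20}n)$, i.e., case~\ref{StabLem:2}. Otherwise $|\intr(G)|\ge n-\delta-11\eta n$ and both exteriors have size $<\tfrac{19}{10}(2\delta-n)$; by Lemma~\ref{lem:cpts}\ref{lem:cpts:c}, removing the independent set $\intr(G)$ disconnects $G$ into these two exteriors, which places us in case~\ref{StabLem:3}.

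The main obstacle is the remaining case, where $\intr(G)$ is \emph{not} independent; this is precisely the exceptional regime near $\delta=\tfrac{3n}{5}$ flagged in the proof sketch, in which the counting bound above no longer forces $k\le 2$ and, more importantly, the Hall-type matching extension underlying Fact~\ref{fac:StabLem:around35} is unavailable, as that argument crucially uses the independence of $\intr(G)$. I would dispose of this case by invoking the (separately proved) technical Fact~\ref{fac:StabLem:int:special}. The essential idea is to extract a connected triangle factor of the desired size directly from the data of an interior edge: given $uv\in E(G)$ with $u,v\in\intr(G)$, a common neighbour $w$ (guaranteed by $\delta>n/2$) produces a triangle $uvw$ lying in some component $C$; one then locates a large matching in the exterior of a different triangle component (whose vertices are by Lemma~\ref{lem:cpts}\ref{lem:cpts:c} non-adjacent to $\bound(C)$) and extends this matching through $\intr(G)$ via a careful ad hoc greedy or Hall-type argument. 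The bookkeeping required to reach the numerical thresholds of cases~\ref{StabLem:1} and~\ref{StabLem:2} is what motivates isolating this argument in its own technical statement.
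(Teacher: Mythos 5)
Your first branch (the case where $\intr(G)$ is independent) is essentially the paper's argument: independence gives $|\intr(G)|\le n-\delta$, a vertex--component incidence count gives $k\le 2$, and then Fact~\ref{fac:StabLem:around35} plus Lemma~\ref{lem:cpts}\ref{lem:cpts:c} split matters between cases~\ref{StabLem:2} and~\ref{StabLem:3}. That part is fine.

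The second branch has a genuine gap. You propose to dispose of the case where $\intr(G)$ is \emph{not} independent by invoking Fact~\ref{fac:StabLem:int:special}, but that fact only applies when $r=\rp(n,\delta)=r'+1\in\{3,4\}$. In the range $\frac{3n-2}{5}\le\delta<\frac{2n-1}{3}$ covered by Fact~\ref{fac:StabLem:35} we have $r=2$, so the hypothesis of Fact~\ref{fac:StabLem:int:special} simply never holds and it gives you nothing. (The ``exceptional regime near $\delta=3n/5$'' that Fact~\ref{fac:StabLem:int:special} handles is the one \emph{below} $\frac{3n-2}{5}$, where $r$ jumps from $2$ to $3$ as $\delta$ decreases and one has $r=3$, $r'=2$.) So in your plan the non-independent case is not actually closed.

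What the paper does instead is show directly that $\intr(G)$ \emph{must} be independent once $\delta\ge\frac{3n-2}{5}$: if $uv$ were an edge of $\intr(G)$, pick $x\in\Gamma(u,v)$, let $C$ be the triangle component of $uvx$, and (since $u,v\in\intr(G)$) choose edges $uy$ and $vz$ outside $C$. One checks that $\Gamma(u,y)$, $\Gamma(v,z)$ and $\{u,v,x,y,z\}$ are pairwise disjoint and that $x$ is adjacent to none of $\Gamma(u,y)\cup\Gamma(v,z)\cup\{y,z\}$, whence $\delta\le d(x)\le (n-1)-2(2\delta-n)-2$, i.e.\ $\delta\le(3n-3)/5$, contradicting $\delta\ge(3n-2)/5$. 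That neighbourhood-counting step is the missing ingredient; without it your dichotomy is not a true case split, and no other lemma in the paper covers the leftover case.
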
 
  \begin{factproof}[Proof of Fact~\ref{fac:StabLem:35}]
    Observe that in this range $r=2$. Assume $G$ has an edge $uv$ in $\intr(G)$,
    let $x$ be a common neighbour of $u$ and $v$ and $C$ be the triangle
    component containing $ux$ and $vx$. Since $uv\in\intr(G)$ there are edges
    $uy$ and $vz$ of $G$ outside $C$. The sets $\Gamma(u,y)$, $\Gamma(v,z)$ and
    $\{u,v,x,y,z\}$ are pairwise disjoint, and $x$ is not adjacent to
    $\Gamma(u,y)\cup\Gamma(v,z)\cup\{y,z\}$. So $\delta\le
    d(x)\le(n-1)-2(2\delta-n)-2$ which is only possible when
    $\delta\le(3n-3)/5$, a contradiction. Thus $\intr(G)$ is an independent set,
    which implies $|\intr(G)|\le n-\delta$. Hence, by
    Lemma~\ref{lem:cpts}\ref{lem:cpts:a}, $G$ cannot have more than two triangle
    components. In particular, all vertices in $\intr(G)$ lie in both triangle
    components of $G$. So if $|\intr(G)|\ge n-\delta-11\eta n$ then $\intr(G)$
    is the desired large independent set for Case~\ref{StabLem:3}. If moreover
    all triangle component exteriors are of size $\frac{19}{10}(2\delta-n)$ at
    most we are in Case~\ref{StabLem:3}. Otherwise (if $\intr(G)$ is small or a
    triangle component exterior is large) Fact~\ref{fac:StabLem:around35} gives
    $\CTF(G)\ge\min(\sqp(n,\delta+\eta n),\frac{11}{20}n)$ which is
    Case~\ref{StabLem:2}.
  \end{factproof}
   
  For the remainder of the proof, we suppose $\delta<\frac{3n-2}{5}$ and
  accordingly $r\ge 3$ and $r'\ge 2$. For dealing with this case we first
  establish two auxiliary facts. The first one captures the greedy technique
  for finding a large connected triangle factor that we sketched in the
  beginning of this section.  We will use this technique throughout the rest of
  the proof.
   
  \begin{fact} 
  \label{fac:StabLem:greedy} 
    If there are two sets $U_1,U_2\subset V(G)$ 
    such that no vertex in $U_1$ has a neighbour in $U_2$, all edges in 
    $G[U_1]$ are triangle connected and $\delta(G[U_1])\ge\delta_1$ then 
    $\CTF(G)\ge\min(3\lfloor|U_1|/2\rfloor,3\delta_1,2\delta-n+|U_2|)$. 
  \end{fact}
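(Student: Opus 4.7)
\emph{Plan.} The plan is to follow exactly the greedy recipe outlined in the prose preceding the fact. First I apply Proposition~\ref{prop:match}\ref{prop:match:a} to $G[U_1]$, which has minimum degree at least $\delta_1$, to obtain a matching $M_0$ in $G[U_1]$ with $|M_0|\ge\min(\delta_1,\lfloor|U_1|/2\rfloor)$ edges. Since the hypothesis says that all edges of $G[U_1]$ lie in a single triangle component, the edges of $M_0$ are automatically pairwise triangle connected.

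Next I lower bound the common neighbourhood of every edge with both endpoints in $U_1$, using the ``large avoided set'' trick from the proof sketch. For any such edge $uv$, no vertex of $U_1$ has a neighbour in $U_2$, so $\Gamma(u)\cup\Gamma(v)\subset V(G)\setminus U_2$ and therefore
\[
   |\Gamma(u,v)|\ \ge\ |\Gamma(u)|+|\Gamma(v)|-|V(G)\setminus U_2|\ \ge\ 2\delta-n+|U_2|.
\]

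Finally I extend (a portion of) the matching greedily to a family of vertex disjoint triangles. Set
\[
   t\ :=\ \min\Big(\delta_1,\ \big\lfloor|U_1|/2\big\rfloor,\ \big\lfloor(2\delta-n+|U_2|+2)/3\big\rfloor\Big),
\]
choose any $t$ edges $u_1v_1,\dots,u_tv_t$ from $M_0$, and for $i=1,\dots,t$ in turn pick a vertex $w_i\in\Gamma(u_i,v_i)\setminus\big(V(M_0)\cup\{w_1,\dots,w_{i-1}\}\big)$. The set of forbidden vertices has size at most $(2t-2)+(i-1)\le 3t-3$, which by the choice of $t$ is strictly smaller than $|\Gamma(u_i,v_i)|\ge 2\delta-n+|U_2|$, so such a $w_i$ always exists. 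The triangles $u_iv_iw_i$ are vertex disjoint, and every edge of each triangle lies in the same triangle component as $u_iv_i\in G[U_1]$; hence they form a connected triangle factor of size $3t\ge\min\bigl(3\delta_1,\,3\lfloor|U_1|/2\rfloor,\,2\delta-n+|U_2|\bigr)$, where for the third term we use that $3\lfloor(x+2)/3\rfloor\ge x$ with $x=2\delta-n+|U_2|$.

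The only delicate point in this plan is the choice of $t$: it must be capped at $\lfloor(2\delta-n+|U_2|+2)/3\rfloor$ rather than simply using all of $M_0$, so that at the last step of the greedy extension the common neighbourhood still contains a vertex outside the $2t-2$ remaining matching endpoints and the $t-1$ previously chosen third vertices. Everything else is elementary bookkeeping using the two ingredients assembled above (the matching from Proposition~\ref{prop:match}\ref{prop:match:a} and the common-neighbourhood bound $2\delta-n+|U_2|$).
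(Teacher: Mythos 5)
Your proof is correct and follows essentially the same greedy recipe as the paper: take a matching in $G[U_1]$ via Proposition~\ref{prop:match}\ref{prop:match:a}, bound $|\Gamma(u,v)|\ge 2\delta-n+|U_2|$ using that $U_1$ sends no edges to $U_2$, cap the number of matching edges used so the greedy extension cannot fail, and observe that triangle-connectedness of $G[U_1]$ makes the resulting triangle factor connected. One tiny slip: you write $w_i\notin V(M_0)\cup\{w_1,\dots,w_{i-1}\}$ but then count the forbidden set as $(2t-2)+(i-1)$, which is the size of $\{u_1,v_1,\dots,u_t,v_t\}\setminus\{u_i,v_i\}\cup\{w_1,\dots,w_{i-1}\}$ rather than of $V(M_0)\cup\cdots$ (which could be larger if $|M_0|>t$); replacing $V(M_0)$ by the vertex set of the $t$ chosen edges fixes this and matches your counting.
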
 
  \begin{factproof}[Proof of Fact~\ref{fac:StabLem:greedy}]
    By Proposition~\ref{prop:match}\ref{prop:match:a} we can find a matching $M'$
    in~$U_1$ covering
    \[\min(2\lfloor|U_1|/2\rfloor,2\delta_1)\] 
    vertices. Let $M$ be a subset of $M'$ covering
    $\min(2\lfloor|U_1|/2\rfloor,2\delta_1,(4\delta-2n+2|U_2|)/3)$ vertices. For
    each edge $e\in M$ in turn we pick greedily a common neighbour of $e$
    outside both $M$ and the previously chosen common neighbours to obtain a set
    $T$ of disjoint triangles.  For any $x,y\in U_1$ we have $|\Gamma(x,y)|\geq
    2\delta-(n-|U_2|)$.  
    We claim that this implies that~$T$ can be constructed, covering all
    of~$M$. Indeed, in each step of the greedy procedure we have strictly more
    than $2\delta-(n-|U_2|)-3|M|\ge 0$ common neighbours of $e\in M$ available. 
    Hence $T$ covers at least
    $\min(3\lfloor|U_1|/2\rfloor,3\delta_1,2\delta-n+|U_2|)$ vertices.  Note
    further that~$T$ is a connected triangle factor because all edges in
    $G[U_1]$ are triangle connected.
  \end{factproof}
   
  Below, our goal will be to show that $\intr(G)$ is an independent set. The
  following fact prepares us for this step. 

  \begin{fact} 
  \label{fac:StabLem:A} 
    Let $uv$ be an edge in $\intr(G)$.  
    Unless $r'=2$ at least one vertex, $u$ or $v$, is contained in at most 
    $r'-1$ triangle components. 
    % , and in exactly $2$ triangle components if $r'=2$. 
  \end{fact}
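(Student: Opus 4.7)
My plan is to argue by contradiction, assuming that both $u$ and $v$ lie in at least $r'$ triangle components. Let $C$ be the triangle component containing $uv$, enumerate the other components of $u$ as $C_1, \dots, C_{k-1}$ and those of $v$ as $C'_1, \dots, C'_{k'-1}$ (so $k, k' \ge r'$), and set $U_j := \{w : uw \in C_j\}$ with $U_0 := U$, and analogously $V'_{j'}$ for $v$. The sets $\{U_j\}_{j=0}^{k-1}$ partition $\Gamma(u)$, and Lemma~\ref{lem:cpts}\ref{lem:cpts:d} gives $|U_j|, |V'_{j'}| \ge \alpha := 2\delta - n + 1$.

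The structural heart of the argument is an observation about an arbitrary common neighbour $w \in \Gamma(u,v)$ (which exists since $|\Gamma(u,v)| \ge 2\delta - n > 0$): such a $w$ has no neighbour in $\bigcup_{j\ge 1}U_j \cup \bigcup_{j' \ge 1}V'_{j'}$. Indeed, the triangle $uvw$ together with $uv \in C$ forces $uw \in C$; if $wx$ were an edge with $x \in U_j$ for some $j \ge 1$, then the triangle $uwx$ would place $uw$ and $ux$ in the same triangle component, contradicting $uw \in C$ and $ux \in C_j \ne C$. The symmetric argument rules out $\bigcup_{j' \ge 1}V'_{j'}$, and the same reasoning shows the sets $U_j, V'_{j'}$ with indices $\ge 1$ are pairwise disjoint. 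Since $w \in U \cap V'$ lies outside all of them,
\[
  \delta \le d(w) \le n - 1 - \sum_{j \ge 1}|U_j| - \sum_{j' \ge 1}|V'_{j'}| \le n - 1 - 2(r'-1)\alpha,
\]
which rearranges to $2(r'-1)\alpha \le n - \delta - 1$.

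The hard part is contradicting this inequality when $r' \ge 3$, which is a computation internal to the definition of $r' = \rp(n, \delta + \eta n)$. Decoding the condition $r' = k$ as $\delta \in \bigl((k+1)(n-1)/(2k+1) + \eta n,\; k(n-1)/(2k-1) - \eta n\bigr]$ and rewriting the desired $2(k-1)\alpha > n - \delta - 1$ as $\delta > (2k-1)(n-1)/(4k-3)$, one computes
\[
  \frac{k+1}{2k+1} - \frac{2k-1}{4k-3} = \frac{k-2}{(2k+1)(4k-3)}>0\quad\text{for }k\ge 3.
\]
Since $r'$ is bounded by a constant depending only on $\mu$, this margin is of order $\mu n$, comfortably absorbing the $\eta n$ correction under the standing hypothesis $\eta \le 2\mu^2/3$. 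The main subtlety throughout is identifying the right triangle-based exclusions among the $U_j, V'_{j'}$; once those are in place, the arithmetic is routine.
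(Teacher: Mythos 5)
Your structural argument is exactly the paper's: a vertex $w\in\Gamma(u,v)$ has $uw$ and $vw$ both in the component $C$ of $uv$, so $w$ cannot be adjacent to any vertex of the sets $U_j$, $V'_{j'}$ ($j,j'\ge 1$), and these sets are pairwise disjoint, giving $\delta\le d(w)\le n-1-2(r'-1)(2\delta-n+1)$. Where you diverge is the finishing arithmetic. The paper rearranges to $2r'-2<(n-\delta)/(2\delta-n+1)$, invokes \eqref{eq:r} to bound the right side by $r=\rp(n,\delta)$, and then uses the bound $r\le r'+1$ (a consequence of $\eta$ being tiny relative to $\mu$) to conclude $r'\le 2$. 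You instead unpack the definition of $r'=k$ into explicit thresholds on $\delta$ and compare directly with $\delta>(2k-1)(n-1)/(4k-3)$, computing the margin $\frac{k-2}{(2k+1)(4k-3)}$. Both routes are valid and both ultimately lean on $\eta\ll\mu$; the paper's finish is a bit slicker because it avoids explicit threshold arithmetic, while yours makes the quantitative slack visible.

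One small but genuine slip: since $r'=\rp(n,\delta+\eta n)$, decoding $r'=k$ gives $\delta+\eta n$ in the interval from \eqref{eq:r}, hence $\delta\ge\frac{(k+1)n-k}{2k+1}-\eta n$, not $+\eta n$ as you wrote. Your interval is a proper subset of the true one, so as written you are assuming more than you actually have. Fortunately this is harmless: the margin $\frac{(k-2)n}{(2k+1)(4k-3)}$ that you compute is of order $\min(1,\mu)n$ and easily dominates $2\eta n$, so the argument survives the sign correction. Also note that you never defined $U$ (presumably $U_0=\{w:uw\in C\}$); worth fixing for clarity.
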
 
   \begin{factproof}[Proof of Fact~\ref{fac:StabLem:A}]
     Let $C_1$ be the triangle component containing $uv\in\intr(G)$ along with
     the (non-empty) common neighbourhood $\Gamma(u,v)$ (and perhaps some other
     neighbours of $u$ or $v$ separately). Suppose that $C\neq C_1$, and $u$ is
     a vertex of $C$. Then by Lemma~\ref{lem:cpts}\ref{lem:cpts:d}, there are at
     least $2\delta-n+1$ neighbours $x$ of $u$ such that the edge $ux$ is in
     $C$. Now suppose that $u$ lies in at least $r'-1$ triangle components other
     than $C_1$. It follows that there is a set $U_u\subset\Gamma(u)$ of
     vertices $x$ such that $ux$ is not in $C_1$, with
     $|U_u|\geq(r'-1)(2\delta-n+1)$, since no edge lies in two distinct triangle
     components. Suppose furthermore that $v$ too lies in at least $r'-1$
     triangle components other than $C_1$. Then there exists an analogously defined
     set $U_v$. Since all vertices of $\Gamma(u,v)$ form triangles of $C_1$ with
     $u$ and $v$, the three sets $\Gamma(u,v)$, $U_u$ and $U_v$ are pairwise
     disjoint, and thus $|U_u\cup U_v|\geq (2r'-2)(2\delta-n+1)$.  Now given any
     $x\in\Gamma(u,v)$, since $ux$ and $vx$ are both in $C_1$, $x$ cannot be
     adjacent to any vertex of $U_u\cup U_v$. But then $\delta\le
     d(x)<n-(2r'-2)(2\delta-n+1)$ which is equivalent to
     $2r'-2<(n-\delta)/(2\delta-n+1)$. By~\eqref{eq:r} the right-hand side is at
     most $r$ and thus we get $2r'-2<r$. Since $r\le r'+1$ however this is a
     contradiction unless $r'\le 2$. 
   \end{factproof}

  We assume from now on, that
  \begin{equation}\label{eq:StabLem:co}
   \CTF(G)<\sqp(n,\delta+\eta n)\,,
  \end{equation}
  that is, we are not in Cases~\ref{StabLem:1} or~\ref{StabLem:2}.
  Our aim is to conclude that then ($*$) $\intr(G)$ is an independent set and 
  that its vertices are contained in at least $r'$ triangle components. It
  turns out, however, that we need to consider the cases $r=r'+1=3$ and
  $r=r'+1=4$ (i.e., the cases when the minimum degree $\delta$ is just a
  little bit below $\frac35n$ and~$\frac47n$, respectively) separately.
  Unfortunately these two cases, which are treated by
  Fact~\ref{fac:StabLem:int:special}, require a somewhat technical case
  analysis, which we prefer to defer to the end of the section.
  
  \begin{fact} 
  \label{fac:StabLem:int:special} 
    If $r=r'+1=3$ or $r=r'+1=4$ then $\intr(G)$ is an independent set  
    all of whose vertices are contained in at least $r'$ triangle components. 
  \end{fact}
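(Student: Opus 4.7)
The plan is to suppose for contradiction that either $\intr(G)$ contains an edge $uv$, or that some vertex $w\in\intr(G)$ lies in at most $r'-1$ triangle components (the latter being vacuous when $r'=2$), and in each situation to build a connected triangle factor of $G$ of size at least $\sqp(n,\delta+\eta n)$, contradicting~\eqref{eq:StabLem:co}. The common idea is that the offending vertex concentrates most of its degree in only a few triangle components, so that one of those components $C$ contains a large set $U_1$ which is triangle connected inside $C$ and has minimum degree at least $2\delta-n$ in $G[U_1]$ by Lemma~\ref{lem:cpts}\ref{lem:cpts:d}. Fact~\ref{fac:StabLem:greedy}, applied with this $U_1$ and with $U_2$ chosen inside the exterior of another triangle component (using Lemma~\ref{lem:cpts}\ref{lem:cpts:c} to guarantee no edges back to $U_1$), will then produce a large $\CTF(G)$.

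For $r=r'+1=4$ (so $r'=3$) I would invoke Fact~\ref{fac:StabLem:A}: any edge $uv\in\intr(G)$ has an endpoint, say $u$, in at most $r'-1=2$ components, and since $u\in\intr(G)$ the count is exactly $2$. Calling these components $C$ and $C'$, the degree condition $d(u)\ge\delta$ together with Lemma~\ref{lem:cpts}\ref{lem:cpts:d} forces the larger side, say $U_1:=\{x:ux\in C\}$, to satisfy $|U_1|\ge\lceil\delta/2\rceil$. Choosing $U_2$ appropriately in the exterior of $C'$, Fact~\ref{fac:StabLem:greedy} yields $\CTF(G)\ge\min(3|U_1|/2,\,3(2\delta-n),\,2\delta-n+|U_2|)$, and comparison with the bound $\sqp(n,\delta+\eta n)\le\tfrac32\bigl(\tfrac{\delta+3\eta n}{3}-2\bigr)$ from~\eqref{eq:sqpa} at $r'=3$ closes the contradiction. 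The case where $w\in\intr(G)$ lies in exactly two components is handled by the same analysis with $u$ replaced by $w$, again using that all of $w$'s degree is spread across at most two components.

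For $r=r'+1=3$ (so $r'=2$), Fact~\ref{fac:StabLem:A} is vacuous and only the independence of $\intr(G)$ needs to be proved, since any $w\in\intr(G)$ automatically lies in at least $2=r'$ components. I would take $uv\in\intr(G)$ and let $C_1$ be the triangle component containing $\Gamma(u,v)$. Every $x\in\Gamma(u,v)$ has $ux,vx\in C_1$, and hence $x$ cannot be adjacent to the disjoint sets $U_u,U_v$ of neighbours of $u$ (respectively $v$) joined to them by edges lying in triangle components other than $C_1$; by Lemma~\ref{lem:cpts}\ref{lem:cpts:d} each of $U_u,U_v$ has size at least $2\delta-n+1$. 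The inequality $\delta\le d(x)\le n-1-|U_u\cup U_v|$ then restricts how $u$'s and $v$'s neighbours are distributed among the at most three triangle components of $G$, and in particular pins down a component $C_i$ whose exterior is large enough for Fact~\ref{fac:StabLem:greedy} to yield $\CTF(G)\ge\sqp(n,\delta+\eta n)$, using the bound $\sqp(n,\delta+\eta n)\le\tfrac32\bigl(\tfrac{\delta+3\eta n}{2}-2\bigr)$ from~\eqref{eq:sqpa} at $r'=2$ for the final comparison.

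The main obstacle is the narrow numerical margin in both cases: $\delta$ lies only $O(\eta n)$ below the jump points $3n/5$ or $4n/7$, so the natural lower bounds on $\CTF(G)$ arising from Fact~\ref{fac:StabLem:greedy}, which behave like $4\delta-2n$, are comparable to the target $\sqp(n,\delta+\eta n)$, and matching them requires a careful subcase analysis on the sizes $|\intr(G)|$ and $|\bound(C_i)|$, and on how many triangle components each relevant vertex lies in. Exploiting the $\eta n$ slack afforded by~\eqref{eq:sqpa} and~\eqref{eq:sqpb} is what ultimately closes the gap, and is the reason this fact is the "somewhat technical case analysis" the authors defer.
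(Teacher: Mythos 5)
The high-level plan here—contradiction via Fact~\ref{fac:StabLem:greedy}, invoking Fact~\ref{fac:StabLem:A} when $r'=3$, and comparing with~\eqref{eq:sqpa}—is the right starting point, but the argument as written does not close, and the gap is precisely the numerical tightness you flag at the end. A direct application of Fact~\ref{fac:StabLem:greedy} with $U_1=\{x:ux\in C\}$ and $U_2$ being $u$'s neighbours in another component (or $\bound(C')$) gives $\CTF(G)\ge\min\big(3\lfloor|U_1|/2\rfloor,\,3(2\delta-n),\,2\delta-n+|U_2|\big)$, and the binding term is the last one: all Lemma~\ref{lem:cpts}\ref{lem:cpts:d} guarantees is $|U_2|\ge 2\delta-n+1$, so that term is only about $2(2\delta-n)$. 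In the $r'=3$ regime ($\delta\approx 4n/7$) this is $\approx 2n/7$, essentially tied with $\sqp(n,\delta+\eta n)\approx\delta/2\approx 2n/7$, and the sign of the difference goes the wrong way once the $O(\eta n)$ slack is accounted for; in the $r'=2$ regime ($\delta\approx 3n/5$) it is $\approx 2n/5$, which is strictly less than $\sqp(n,\delta+\eta n)\approx 3\delta/4\approx 9n/20$. So the contradiction simply does not materialise from a single application of Fact~\ref{fac:StabLem:greedy}.

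The paper's proof uses Fact~\ref{fac:StabLem:greedy} in a more indirect way. For $r'=3$ it first establishes the structural statements \ref{StabLem:47:a} and \ref{StabLem:47:b}, where Fact~\ref{fac:StabLem:greedy} yields only \emph{upper} bounds on the sizes of the "other-component" neighbourhood sets $|U'|,|U_i|$ (since otherwise $\CTF(G)$ would already be too large). With $|U_2|,|V_2|\approx n/7$ thus pinned down, $|U_1|\approx 3n/7$ is forced to be so large that each vertex of $U_2$ is adjacent to all but $O(\eta n)$ vertices outside $U_1$ (and similarly for $V_2$, $V_1$). One then finds near-perfect matchings $M_u\subset U_2$ and $M_v\subset V_2$, shows all edges of $M_u\dcup M_v$ are mutually triangle connected (because each has overwhelming common neighbourhood), and greedily extends to a connected triangle factor of size roughly $\frac32(|U_2|+|V_2|)\approx 3n/7$, which \emph{does} decisively exceed $\sqp(n,\delta+\eta n)$. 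The $r'=2$ case is the same idea without the preliminary structural lemmas, giving $\CTF(G)\ge(\tfrac35-O(\eta))n>n/2$. The missing idea in your proposal is this second stage: after the Fact~\ref{fac:StabLem:greedy} pinching forces the "small" sets to be small, one must build the triangle factor from those small sets (via matchings and near-complete bipartite adjacency), not from $U_1$ directly.
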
 
   
  Assuming this fact is true we can deduce ($*$) for all values $r\ge 3$ as 
  follows. 
   
  \begin{fact} 
  \label{fac:StabLem:int} 
    The set $\intr(G)$ is an independent set  
    (and hence of size at most $n-\delta$) all of whose vertices are contained 
    in at least $r'$ triangle components. 
  \end{fact}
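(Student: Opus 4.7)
The plan is to split the proof based on the values of $r$ and $r'$. For the special cases $r = r' + 1 \in \{3, 4\}$ the statement is exactly Fact~\ref{fac:StabLem:int:special}, so I may restrict attention to the remaining cases, in which $r' \geq 3$ (note that $r' = 2$ forces $r = 3$, since we are in the range $r \geq 3$, hence $r = r' + 1 = 3$, which is special).

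For these remaining cases I would first prove part~(ii): every vertex of $\intr(G)$ is contained in at least $r'$ triangle components. Suppose for contradiction that some $u \in \intr(G)$ is contained in exactly $k \leq r' - 1$ triangle components $C_1, \ldots, C_k$, and partition $\Gamma(u) = U_1 \dcup \cdots \dcup U_k$ where $U_i = \{v : uv \in C_i\}$. By Lemma~\ref{lem:cpts}\ref{lem:cpts:d} each $|U_i| \geq 2\delta - n + 1$, and $\sum_i |U_i| = d(u) \geq \delta$, so some $j \in [k]$ satisfies $|U_j| \geq \delta/k \geq \delta/(r'-1)$. The key structural observation is that for $i \neq j$ no edges run between $U_j$ and $U_i$: an edge $vw$ with $v \in U_j$, $w \in U_i$ would make $uvw$ a triangle whose edges $uv \in C_j$ and $uw \in C_i$ would have to lie in the same triangle component, forcing $i = j$. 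Setting $U_\star = \bigcup_{\ell \neq j} U_\ell$ (so $|U_\star| \geq d(u) - |U_j| \geq \delta - |U_j|$), I may apply Fact~\ref{fac:StabLem:greedy} with $U_1 = U_j$ and $U_2 = U_\star$ (noting $\delta(G[U_j]) \geq 2\delta - n$ and that all edges of $G[U_j]$ are triangle connected within $C_j$) to deduce
\[
  \CTF(G) \geq \min\bigl(3\lfloor|U_j|/2\rfloor,\ 6\delta - 3n,\ 3\delta - n - |U_j|\bigr).
\]
A case analysis by regime of $(r, r')$, using~\eqref{eq:sqpa} and~\eqref{eq:sqpb} and the freedom to choose $j$ among the $k$ components of $u$ (each choice gives a different $|U_j|$ and hence a different trade-off between the first and third terms above), would then show that the right-hand side strictly exceeds $\sqp(n, \delta + \eta n)$, contradicting our standing assumption~\eqref{eq:StabLem:co}.

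Part~(i) then follows immediately from Fact~\ref{fac:StabLem:A}: any edge $uv \in \intr(G)$ would, since $r' \geq 3$, force one of its endpoints to lie in at most $r' - 1$ triangle components, contradicting the just-proved~(ii). Since $\intr(G)$ is then independent, each of its vertices has all $\delta$ of its neighbors outside $\intr(G)$, giving $|\intr(G)| \leq n - \delta$.

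The main obstacle is the arithmetic at the end---verifying the claimed strict inequality in each subcase. The tightest bounds arise precisely at the transitions $\delta \approx 3n/5$ and $\delta \approx 4n/7$ where $\rp(n, \cdot)$ jumps, but these boundary transitions are exactly the special cases $r = r' + 1 \in \{3, 4\}$ absorbed into Fact~\ref{fac:StabLem:int:special}; away from them, the slack between the two bounds in~\eqref{eq:sqpa} together with the ability to optimize $|U_j|$ among the available components is enough to push past $\sqp(n, \delta + \eta n)$.
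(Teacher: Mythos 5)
Your overall strategy matches the paper's: reduce to the case $r' \ge 3$ via Fact~\ref{fac:StabLem:int:special}, prove that every vertex of $\intr(G)$ lies in at least $r'$ triangle components, deduce independence from Fact~\ref{fac:StabLem:A}, and derive the component count by contradiction via Fact~\ref{fac:StabLem:greedy}. But there is a concrete gap in the greedy step: you lower-bound $|U_\star|$ by $d(u)-|U_j|\ge\delta-|U_j|$, which produces the third term $3\delta-n-|U_j|$. This bound is too weak. When $d(u)$ is close to $n-1$ and all the $U_\ell$ with $\ell\neq j$ are as small as possible (each of size $2\delta-n+1$), the set $U_j$ can be as large as $d(u)-(k-1)(2\delta-n+1)$, and then $\delta-|U_j|$ is negative, so $3\delta-n-|U_j|$ falls below $\sqp(n,\delta+\eta n)$ (indeed below zero). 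Switching to a smaller choice of $j$ does not rescue this: for, say, $r'=3$, $k=2$ and $|U_j|=2\delta-n+1$, the first term $3\lfloor|U_j|/2\rfloor\approx\frac32(2\delta-n)$ is smaller than $\sqp(n,\delta+\eta n)\approx\delta/2$ throughout the range $\delta<3n/5$, so the minimum still fails. Thus the case analysis you defer to cannot close the argument as set up.

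The fix is simple and is what the paper does implicitly: since each of the $k-1$ sets $U_\ell$ with $\ell\neq j$ has size at least $2\delta-n+1$ by Lemma~\ref{lem:cpts}\ref{lem:cpts:d}, you should use $|U_\star|\ge(k-1)(2\delta-n+1)\ge2\delta-n+1$. Then the third term of Fact~\ref{fac:StabLem:greedy} is $2\delta-n+|U_\star|\ge4\delta-2n+1>\sqp(n,\delta+\eta n)$ by~\eqref{eq:sqpb}, with no trade-off against $|U_j|$ and no optimization over $j$ required: you simply take $U_j$ to be the largest set, so that $|U_j|\ge\delta/(r'-1)$ handles the first term via~\eqref{eq:sqpa}, and the bound $6\delta-3n$ handles the middle term via~\eqref{eq:sqpb}. (The paper takes $U_2$ to be a single other set $U_i$ rather than the whole union $U_\star$, which gives the same $4\delta-2n+1$ bound and is slightly simpler.) The rest of your argument, including the key structural observation that no edges run between distinct $U_i$, the verification that $G[U_j]$ has the required minimum degree and triangle-connectedness, and the deduction of independence and $|\intr(G)|\le n-\delta$ at the end, is correct.
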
 
  \begin{factproof}[Proof of Fact~\ref{fac:StabLem:int}]
    Recall that we have $r\ge 3$ at this point of the proof. Moreover,
    the cases $r=r'+1=3$ and $r=r'+1=4$ are handled by
    Fact~\ref{fac:StabLem:int:special}. So we assume we are not in these cases; in
    particular, $r'\geq 3$. We will show that then each vertex of $\intr(G)$ is
    contained in at least~$r'$ triangle components. Once we establish this,
    Fact~\ref{fac:StabLem:A} implies that there are no edges in $\intr(G)$ and
    so $\intr(G)$ is an independent set as desired.
 
    To prove that each vertex of $\intr(G)$ is contained in at least $r'$
    triangle components we assume the contrary and show that then
    $\CTF(G)\ge\sqp(n,\delta+\eta n)$, a contradiction
    to~\eqref{eq:StabLem:co}. Indeed, let $w\in\intr(G)$ and suppose that there
    are $k>1$ triangle components $C_1,\dots,C_k$ containing~$w$. For $i\in[k]$
    let~$U_i$ be the set of neighbours~$u$ of~$w$ such that $uw\in C_i$.
    By
    Lemma~\ref{lem:cpts}\ref{lem:cpts:d} we have $\delta(G[U_i])\geq 2\delta-n$
    and $|U_i|\ge2\delta-n+1$. Suppose that $U_1$ is the largest of the $U_i$.
    No vertex in $U_1$ has a neighbour in $U_2$, since the components are
    distinct. In addition, all edges in $G[U_1]$ are triangle connected, because
    $U_1\subset \Gamma(w)$. Therefore Fact~\ref{fac:StabLem:greedy} implies
    that there is a connected triangle factor $T$ in $G$ covering
    $\min(3\lfloor|U_1|/2\rfloor,3(2\delta-n),2\delta-n+|U_2|)\ge
    \min(3\lfloor|U_1|/2\rfloor,4\delta-2n)$ vertices.
    If $w$ lies only in $r'-1$ triangle components then $|U_1|\ge\delta/(r'-1)$
    and therefore $T$ covers at least
    $\min(3\lfloor\delta/(2r'-2)\rfloor,4\delta-2n)$ vertices.  Now
    since~\eqref{eq:sqpa} holds, we have
    $\frac32\delta/(r'-1)-2\ge\sqp(n,\delta+\eta n)$. Since $r\geq r'\geq 3$ and
    we have excluded the case $r=r'+1=4$, by~\eqref{eq:sqpb} we have
    $4\delta-2n\ge\sqp(n,\delta+\eta n)$. It follows that $T$ covers at least
    $\sqp(n,\delta+\eta n)$ vertices, in contradiction
    to~\eqref{eq:StabLem:co}.
  \end{factproof}
 
  \begin{fact}\label{fac:StabLem:case3} 
    We are in Case~\ref{StabLem:3}. 
  \end{fact}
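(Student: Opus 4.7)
The plan is to verify Case~\ref{StabLem:3} with $I := \intr(G)$ as the witnessing independent set. By Fact~\ref{fac:StabLem:int}, $I$ is an independent set with $|I| \leq n-\delta$, and by Lemma~\ref{lem:cpts}\ref{lem:cpts:c} the connected components of $G - I$ are each contained in some exterior $\bound(C_i)$. Thus it suffices to verify the two quantitative bounds $|I| \geq n-\delta-11\eta n$ and $|\bound(C_i)| \leq \tfrac{19}{10}(2\delta-n)$ for every triangle component~$C_i$. I argue by contradiction: if one of these fails, I produce a connected triangle factor of size at least $\sqp(n,\delta+\eta n)$, violating~\eqref{eq:StabLem:co}.

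Suppose first that some $X := \bound(C_i)$ has $|X| > \tfrac{19}{10}(2\delta-n)$. Lemma~\ref{lem:cpts}\ref{lem:cpts:c} forces $\delta(G[X]) \geq \delta - |I| \geq 2\delta - n$, and every edge of $G[X]$ lies in $C_i$, hence is triangle connected. By Proposition~\ref{prop:match}\ref{prop:match:a} there is a matching $M$ in $G[X]$ of size $|M| = \min(\delta-|I|, \lfloor|X|/2\rfloor)$; I then complete $M$ to a connected triangle factor inside $C_i$ by finding, via Proposition~\ref{prop:match}\ref{prop:match:b}, a suitably large matching in the auxiliary bipartite graph $H$ with parts $M$ and $\intr(C_i)$, where $e \sim w$ iff $e \cup \{w\}$ forms a triangle of $G$. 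The key degree estimates come from the observations that every vertex of $X$ has at least $\delta - |X| + 1$ neighbours in $\intr(C_i)$ (Lemma~\ref{lem:cpts}\ref{lem:cpts:c} places its non-$X$ neighbours inside $\intr(C_i) \subseteq I$) and every vertex of $\intr(C_i)$ has at least $2\delta-n+1$ neighbours in $X$ by Lemma~\ref{lem:cpts}\ref{lem:cpts:d}. Together with $|X| > \tfrac{19}{10}(2\delta-n)$ and the analytical bound~\eqref{eq:sqpb}, these force the resulting connected triangle factor to contain at least $\sqp(n,\delta+\eta n)$ vertices, mirroring the argument of Fact~\ref{fac:StabLem:around35} but in a setting with potentially more than two components.

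Now suppose instead that $|I| < n - \delta - 11\eta n$. Then $\sum_j |\bound(C_j)| > \delta + 11\eta n$, while each $|\bound(C_j)| > 2\delta - n$ (combining Lemma~\ref{lem:cpts}\ref{lem:cpts:a} with $|\intr(C_j)| \leq |I| \leq n-\delta$). This bounds the number~$r''$ of triangle components by $n/(2\delta-n)$, while Fact~\ref{fac:StabLem:int} gives $r'' \geq r'$. Pigeonholing produces an exterior $X$ with $|X| \geq (\delta + 11\eta n)/r''$. If $|X| > \tfrac{19}{10}(2\delta-n)$, we fall back on the previous paragraph; otherwise Fact~\ref{fac:StabLem:greedy} applied with $U_1 = X$ and $U_2 = V(G)\setminus(I\cup X)$ yields $\CTF(G) \geq \min(3\lfloor|X|/2\rfloor, 3(\delta-|I|), 2\delta-|I|-|X|)$, where the first term exceeds $\sqp(n,\delta+\eta n)$ by~\eqref{eq:sqpa} (since $|X| \geq (\delta+11\eta n)/r'$) and the second by~\eqref{eq:sqpb} (since $\delta - |I| > 2\delta - n + 11\eta n$), so only the third term can cause trouble, and it is again controlled by the Hall-type matching extension of the previous paragraph. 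The main technical obstacle throughout is precisely that the greedy third term $2\delta - |I| - |X|$ can drop below $\sqp(n,\delta+\eta n)$ when $|X|$ and $|I|$ are simultaneously moderately large; exploiting the decomposition $V(C_i) = X \cup \intr(C_i)$ with $\intr(C_i) \subseteq I$ independent and funnelling triangle completions into distinct vertices of $\intr(C_i)$ via Hall is what bypasses this.
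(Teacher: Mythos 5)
Your overall plan is the right one — show that if either bound in Case~\ref{StabLem:3} fails, then $G$ has a connected triangle factor contradicting~\eqref{eq:StabLem:co} — but two steps do not hold up.

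First, in your second case ($|I| < n-\delta-11\eta n$) you write ``$|X| \geq (\delta + 11\eta n)/r''$'' and then use~\eqref{eq:sqpa} ``since $|X| \geq (\delta+11\eta n)/r'$.'' But you have only shown $r'' \geq r'$ from Fact~\ref{fac:StabLem:int}, which gives $(\delta+11\eta n)/r'' \leq (\delta+11\eta n)/r'$, i.e.\ the inequality points the wrong way; your bound on $|X|$ could be strictly weaker than what~\eqref{eq:sqpa} requires. The bound $r'' < n/(2\delta-n)$ you derive from $|\bound(C_j)| > 2\delta - n$ is also far too weak to give $r'' \leq r'$: compare it with~\eqref{eq:r}, which pins $r'$ near $\delta/(2\delta-n+1)$, roughly half of $n/(2\delta-n)$. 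The paper fills exactly this hole with a separate claim, shown before any $\CTF$ lower bound is invoked: assuming~\eqref{eq:Fact7:goal} fails, each exterior has size at least $\delta-\alpha+1$, so $k \geq r'+1$ components would force $(r'+1)(\delta-\alpha)+\alpha < n$, and substituting $\alpha < n-\delta-11\eta n$ contradicts~\eqref{eq:r} applied to $\delta+\eta n$; together with Fact~\ref{fac:StabLem:int} this pins the number of components at exactly $r'$, which is what legitimates the pigeonhole $|X_1| \geq (\delta+11\eta n)/r'$.

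Second, the Hall-type fallback ``mirroring the argument of Fact~\ref{fac:StabLem:around35}'' does not transfer to this regime. The crucial step in that Fact is $a+b = 3\delta - n - 2|X| + |M| \geq |M| - 10\eta n$ (this is~\eqref{eq:fact1:aplusb2}), and it needs the hypothesis $\delta \geq (\frac35 - 2\eta)n$ so that $3\delta - n - 2|X| > 3\delta - n - 2(n-\delta) = 5\delta - 3n$ is only slightly negative. Fact~\ref{fac:StabLem:case3} is invoked precisely when $\delta < (3n-2)/5$, where $5\delta-3n$ can be of order $-n$; for $\delta$ around $4n/7$ the quantity $a+b$ can fall below $\sqp(n,\delta+\eta n)/3$ and the auxiliary matching is too small. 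The paper sidesteps this entirely: once it knows there are exactly $r' \geq 3$ components, it applies Fact~\ref{fac:StabLem:greedy} with $U_1 = X_1$ and $U_2 = X_2 \cup X_3$, so the third greedy term is $2\delta-n + |U_2| \geq 2\delta-n + 2(2\delta-n+1) \geq 3(2\delta-n)$ and is absorbed, leaving only $\min\bigl(3\lfloor|X_1|/2\rfloor, 3(2\delta-n)\bigr)$ to estimate against~\eqref{eq:sqpa} and~\eqref{eq:sqpb}. This is both simpler and actually works across the whole range; I'd recommend restructuring along those lines rather than trying to rescue the Hall argument.
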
 
  \begin{factproof}[Proof of Fact~\ref{fac:StabLem:case3}]
    Fact~\ref{fac:StabLem:int} tells us that $\intr(G)$ is an independent
    set. By Lemma~\ref{lem:cpts}\ref{lem:cpts:a} and the fact that
    $\delta>n-\delta$ we have that every triangle component in $G$ has an
    exterior, and by Lemma~\ref{lem:cpts}\ref{lem:cpts:c} that there are no
    edges between any triangle component exteriors. Hence, to show that
    we are in Case~\ref{StabLem:3}, it is enough to prove that
    \begin{equation}\label{eq:Fact7:goal}
      |\intr(G)|:=\alpha\ge n-\delta-11\eta n\qquad\text{and}\qquad
      |X_1|\le\frac{19}{10}(2\delta-n)
    \end{equation}
    for the biggest triangle component exterior $X_1$ in $G$.  Suppose for a
    contradiction that this is not the case. We first claim that this forces $G$ to
    have exactly $r'$ triangle components.
  
    Indeed, assume $G$ has $k\ge r'+1$ triangle components. Each of these
    components $C$ has vertices in its exterior $\bound(C)$, and so by
    Lemma~\ref{lem:cpts}\ref{lem:cpts:c} the minimum degree of $G$ implies
    $|\bound(C)|\ge\delta-\alpha+1\ge2\delta-n+1$. We let these triangle
    component exteriors be $X_1,\ldots,X_k$, with $X_1$ being the biggest. Since
    $n=|X_1\dcup\ldots\dcup X_k\dcup\intr(G)|$, we have
    $(r'+1)(\delta-\alpha)+\alpha< n$. We distinguish two cases.

    {\sl Case 1:} \eqref{eq:Fact7:goal} fails because $\alpha<n-\delta-11\eta n$. 
    Then we obtain
    \begin{equation*}
      \begin{split}
        (r'+1)\delta &< n+r'\alpha<n+r'(n-\delta-11\eta n) \\
        & =(r'+1)n-(9r'-1)\eta n-r'\delta-(2r'+1)\eta n\,.
      \end{split}
    \end{equation*}
    Straightforward manipulation gives
    \[\delta+\eta n<\frac{(r'+1)n-(9r'-1)\eta n}{2(r'+1)-1}\,.\]
    Since $(9r'-1)\eta n\ge 9r'-1\ge r'$ this contradicts~\eqref{eq:r} applied
    to $r'=\rp(n,\delta+\eta n)$. 

    {\sl Case 2:} \eqref{eq:Fact7:goal} fails because
    $|X_1|>\frac{19}{10}(2\delta-n)$.
    Let $x$ be any vertex in $X_2$. Since $x$ has at least
    $\delta$ neighbours, none of which are in $X_1\dcup X_3\dcup\ldots\dcup
    X_k$, we have
    \begin{align*}
      1+\delta+\frac{19}{10}(2\delta-n)+(k-2)(2\delta-n+1)&\le
      n\,\text{, hence}\\
      \frac{19}{10}(2\delta-n)+(r'-1)(2\delta-n)&<n-\delta\,.
    \end{align*}
    By~\eqref{eq:r} we have $r'\ge(n-\delta-\eta n)/(2\delta+2\eta
    n-n+1)$. Combined with the last inequality, this gives
    \begin{equation*} 
      \frac9{10}(2\delta-n) 
      +\frac{n-\delta-\eta n}{2\delta-n+1+2\eta n}(2\delta-n)< n-\delta 
    \end{equation*}
    Now provided that $\eta<2\mu^2/3$, and since $2\delta-n\ge2\mu n$, we have
    \begin{equation*}
      \begin{split}
        (2\delta-n+2\eta n+1)(1-\mu)
        & <2\delta-n+3\eta n-\mu(2\delta-n) \\
        & \leq 2\delta-n+3\eta n-2\mu^2 n <2\delta-n\,,
      \end{split}
    \end{equation*}
    and we obtain $\frac{9}{5}\mu n+(1-\mu)(n-\delta-\eta n)<n-\delta$ which is
    a contradiction since $n-\delta<n/2$ and $\eta<\mu$. 
 
    Hence,
    if~\eqref{eq:Fact7:goal} fails, then~$G$ has indeed exactly $r'$ triangle components.
 
    \smallskip

    Now we use this fact in order to derive a contradiction to~\eqref{eq:StabLem:co}.
    Observe that, if $r'=2$, and accordingly $\delta\ge(\frac35-2\eta)n$, then
    Fact~\ref{fac:StabLem:around35} implies that~\eqref{eq:Fact7:goal} holds,
    because according to~\eqref{eq:StabLem:co} we have $\CTF(G)<\sqp(n,\delta+\eta
    n)$. In the remainder we assume $r'\ge3$.
  
    Since every vertex in $X_1$ has neighbours only in $X_1$ and $\intr(G)$, and
    $|\intr(G)|\leq n-\delta$, we have $\delta(G[X_1])\geq
    2\delta-n$. Furthermore, since no vertex in $X_1$ has neighbours in either
    $X_2$ or $X_3$, and $|X_2\dcup X_3|\geq 2(2\delta-n+1)$, we can apply
    Fact~\ref{fac:StabLem:greedy} to obtain
    \begin{align*}
      \CTF(G)&\geq\min\big(3\lfloor|X_1|/2\rfloor, 3(2\delta-n),
      2\delta-n+2(2\delta-n+1)\big)\\
      &= \min\big(3\lfloor|X_1|/2\rfloor,
      3(2\delta-n)\big)\,.
    \end{align*}
    Now by~\eqref{eq:sqpb}, $\CTF(G)\geq 3(2\delta-n)$ is a contradiction
    to~\eqref{eq:StabLem:co}, so to complete our proof it remains to show that
    if~\eqref{eq:Fact7:goal} fails, then $\CTF(G)\geq 3\lfloor|X_1|/2\rfloor$ is
    also a contradiction to~\eqref{eq:StabLem:co}. Again, we distinguish two
    cases.

    {\sl Case 1:} \eqref{eq:Fact7:goal} fails because $\alpha<n-\delta-11\eta n$.
    Since $X_1$ is the largest exterior,
    we have $|X_1|\geq (\delta+11\eta n)/r'$. But we have by~\eqref{eq:sqpa}
    that
    \[\sqp(n,\delta+\eta n)\le\frac32\frac{\delta+3\eta
      n}{r'}-2<3\Big\lfloor\frac{\delta+11\eta n}{2r'}\Big\rfloor\,,\] so that
    $\CTF(G)\geq 3\lfloor|X_1|/2\rfloor$ is indeed a contradiction
    to~\eqref{eq:StabLem:co}.

    {\sl Case 2:} \eqref{eq:Fact7:goal} fails because
    $|X_1|>\frac{19}{10}(2\delta-n)$.  Then $\CTF(G)\geq
    3\lfloor|X_1|/2\rfloor\geq\frac{57}{20}(2\delta-n)-2$, which
    by~\eqref{eq:sqpb} is a contradiction to~\eqref{eq:StabLem:co}, as desired.
  \end{factproof}
 This completes, modulo the proof of Fact~\ref{fac:StabLem:int:special}, the
 proof of Lemma~\ref{StabLem}.
\end{proof} 
 
It remains to show Fact~\ref{fac:StabLem:int:special}. Note that we can use all
facts from the proof of Lemma~\ref{StabLem} that precede
Fact~\ref{fac:StabLem:int:special}. We will further assume that all constants and
variables are set up as in this proof.
 
\begin{proof}[Proof of Fact~\ref{fac:StabLem:int:special}]
  Recall that we assumed~\eqref{eq:StabLem:co}, i.e., $\CTF(G)<\sqp(n,\delta+\eta
  n)$, in this part of the proof of Lemma~\ref{StabLem}. We distinguish two cases.
  % and that we are not in Case~\ref{StabLem:3} of Lemma~\ref{StabLem}.

  \smallskip

  {\sl Case 1:} $r=3$ and $r'=2$. In this case
  $\delta(G)\in[(\frac35-2\eta)n,(\frac35+\eta)n]$. Trivially each vertex of
  $\intr(G)$ is contained in at least $r'=2$ triangle components. Suppose for a
  contradiction that there is an edge $uv$ in $\intr(G)$. Let $x$ be a common
  neighbour of $u$ and $v$, and $C$ be the triangle component containing the
  triangle $uvx$.
  Let $U_1:=\{y:uy\in C\}$ and $V_1:=\{y:vy\in C\}$ and let
  $U_2:=\Gamma(u)\setminus U_1$ and $V_2:=\Gamma(v)\setminus V_1$. Observe that
  $U_2\cap V_2=\emptyset$.
  
  By definition $x$ is not in, and has no neighbour in, $U_2\dcup V_2$. It
  follows that $|U_2\dcup V_2|<n-\delta\leq (\frac{2}{5}+2\eta)n$. On the other
  hand, by Lemma~\ref{lem:cpts}\ref{lem:cpts:d}, we have $|U_2|$,
  $|V_2|>2\delta-n\ge\frac15n-4\eta n$, and thus
  \[|U_2|,|V_2|\in\big[(\tfrac{1}{5}-4\eta)n,(\tfrac{1}{5}+6\eta)n\big]\,.\]
  Since $d(u)\geq\delta\geq(\tfrac{3}{5}-2\eta)n$, we have
  $|U_1|\geq\delta-|U_2|\geq(\tfrac{2}{5}-8\eta)n$. But no vertex in $U_2$ is
  adjacent to any vertex in $U_1$. This implies that every vertex in $U_2$ is
  adjacent to all but at most $n-\delta-|U_1|\leq 10\eta n$ vertices outside
  $U_1$. Since $\eta<\tfrac{1}{1000}$ we have $|U_2|>20\eta n$, so
  $\delta(G[U_2])>|U_2|/2$, and by
  Proposition~\ref{prop:match}\ref{prop:match:a}, $U_2$ contains a matching $M_u$
  with $\lfloor|U_2|/2\rfloor$ edges. Since each vertex of $U_2$ has at most
  $10\eta n$ non-neighbours outside $U_1$, each pair of vertices has common
  neighbourhood covering all but at most~$20\eta n$ vertices of $V(G)\setminus
  U_1$. In particular, the common neighbourhood of each edge of $M_u$ covers all
  but at most~$20\eta n$ vertices of $V(G)\setminus U_1$. Similarly, $V_2$
  contains a matching $M_v$ with $\lfloor|V_2|/2\rfloor$ edges, and the common
  neighbourhood of each edge covers all but at most~$20\eta n$ vertices of
  $V(G)\setminus V_1$.
  
  Since $20\eta n<|U_2|/4$ and $U_2\cap V_1=\emptyset$, the common neighbourhood
  of each edge of $M_v$ contains more than half of the edges of $M_u$. By symmetry, the reverse is also
  true. Thus all edges in $M_u\dcup M_v$ are in the same triangle component of
  $G$. Finally, each edge of $M_u\dcup M_v$ has at least $\delta-10\eta
  n-|U_2\dcup V_2|\geq (\tfrac{1}{5}-24\eta)n$ common neighbours outside
  $U_2\dcup V_2$. Choosing greedily for each edge of $M_u\dcup M_v$ in succession
  distinct common neighbours outside $U_2\dcup V_2$, we obtain a connected
  triangle factor with $\min(\lfloor|U_2|/2\rfloor+\lfloor|V_2|/2\rfloor,
  (\tfrac{1}{5}-24\eta)n)=(\tfrac{1}{5}-24\eta)n$ triangles. But then
  $\CTF(G)\geq(\tfrac{3}{5}-72\eta)n>n/2>\sqp(n,\delta+\eta n)$, a contradiction
  to~\eqref{eq:StabLem:co}. This proves Fact~\ref{fac:StabLem:int:special} for
  the case $r=3$ and $r'=2$.

  % Moreover, by Fact~\ref{fac:StabLem:B}\ref{fac:StabLem:B:a} each edge in
  % $\intr(G)$ contains a vertex in at least $3$ triangle components. We conclude
  % from Fact~\ref{fac:StabLem:A} that $\intr(G)$ is independent.
  
  \smallskip
   
  {\sl Case 2:} $r=4$ and $r'=3$. This implies that
  $(\frac47-2\eta)n\le\delta(G)\le(\frac47+\eta)n$, and consequently
  $\sqp(n,\delta+\eta n)<(\frac27+2\eta)n$. We first prove two statements
  about the structure of $G$ which are forced by~\eqref{eq:StabLem:co}.
  
  \begin{enumerate}[label={\rm($\Psi$)}]
      \item\label{StabLem:47:a}
       If a vertex~$u$ has sets of neighbours $U$, $U'$ on edges in exactly two different
       triangle components with $|U|\ge|U'|$ then $(\frac17-4\eta)
       n<|U'|<(\frac17+6\eta)n$ and $(\frac37-8\eta)n<|U|<(\frac37+2\eta)n$.
  \end{enumerate}
  \begin{factproof}[Proof of~\ref{StabLem:47:a}]
    For the lower bound on $|U'|$, observe that by~\ref{lem:cpts:d} of
    Lemma~\ref{lem:cpts} we have $\delta(G[U'])\geq
    2\delta-n\geq(\frac17-4\eta)n$. To obtain the upper bound, again by
    Lemma~\ref{lem:cpts}\ref{lem:cpts:d} we have $\delta(G[U])\geq 2\delta-n$,
    and since the sets $U$ and $U'$ are neighbours of $u$ in different triangle
    components $C$ and $C'$, there are no edges from $U$ to $U'$.  Furthermore,
    since any edge in $G[U]$ forms a triangle with $u$ using an edge from $u$ to
    $U$, all edges in $G[U]$ are in $C$. Now by Fact~\ref{fac:StabLem:greedy} we
    have
    \[\CTF(G)\geq\min(3\lfloor|U|/2\rfloor,3(2\delta-n),2\delta-n+|U'|)\,.\]
    Since $|U|\geq\delta/2$ we have
    $3\lfloor|U|/2\rfloor\geq(\tfrac{3}{7}-3\eta)n-2>\sqp(n,\delta+\eta n)$.
    By~\eqref{eq:sqpb} we have $3(2\delta-n)>\sqp(n,\delta+\eta
    n)$. Because~\eqref{eq:StabLem:co} holds, we have
    $2\delta-n+|U'|<\sqp(n,\delta+\eta n)<(\frac27+2\eta)n$, and therefore
    $|U'|<(\frac17+6\eta)n$.
    Now the claimed lower and upper bounds on $|U|$ follow from
    $U=\Gamma(u)\setminus U'$, and from the fact that no vertex in $U'$ has a
    neighbour in $U$, respectively.
  \end{factproof}
  
 \begin{enumerate}[label={\rm($\Xi$)}]
     \item\label{StabLem:47:b}
       If a vertex~$u$ has sets of neighbours $U_1$, $U_2$, $U_3$ on edges in exactly
       three different triangle components then
       $(\frac4{21}+2\eta)n>|U_i|>(\frac4{21}-6\eta) n$ for $i\in[3]$.
 \end{enumerate}
 \begin{factproof}[Proof of~\ref{StabLem:47:b}] 
   Assume that $U_1$ is the largest of the three sets. By~\ref{lem:cpts:d} of
   Lemma~\ref{lem:cpts} we have $\delta(G[U_i])\geq
   2\delta-n\geq(\frac17-4\eta)n$ for each $i$, so $|U_i|>(\frac17-4\eta)n$ for
   each $i$. As in the previous case, there can be no edge from $U_1$ to
   $U_2\dcup U_3$, and all edges in $U_1$ are triangle-connected. Thus by
   Fact~\ref{fac:StabLem:greedy} we have
  \[\CTF(G)\geq\min\big(3\lfloor|U_1|/2\rfloor,3(2\delta-n),2\delta-n+|U_2\dcup
  U_3|\big)\,.\] 
  Now since $\sqp(n,\delta+\eta n)<(\tfrac{3}{7}-10\eta)n$
  and~\eqref{eq:StabLem:co} holds, we have
  \[3\lfloor|U_1|/2\rfloor<\sqp(n,\delta+\eta n)\leq (\frac27+2\eta)n\,\]
  which implies $|U_1|<(\frac4{21}+2\eta)n$. Since $|U_2|,|U_3|\leq |U_1|$ this
  completes the desired upper bounds. The lower bounds follow from
  $|U_1|+|U_2|+|U_3|\geq\delta\geq(\tfrac{4}{7}-2\eta)n$.
 \end{factproof}
  
  Next we show that 
 \begin{enumerate}[label={\rm($\Theta$)}]
     \item\label{StabLem:47:c}
       $\intr(G)$ is an independent set. 
 \end{enumerate}
 \begin{factproof}[Proof of~\ref{StabLem:47:c}] 
   Assume for a contradiction that there is an edge $uv\in\intr(G)$. By
   Fact~\ref{fac:StabLem:A} one of the vertices of this edge, say $u$, is in
   only~$2$ triangle components. Let its neighbours be $U_1$ and $U_2$ in these
   two triangle components, and let the neighbours of $v$ be partitioned into
   sets $V_1,\dots,V_k$ according to the triangle component containing the edge
   to $v$. Assume further that $\Gamma(u,v)\subset U_1\cap V_1$, so that
   $U_2,V_2,\ldots,V_k$ are pairwise disjoint. Let $x\in\Gamma(u,v)$. Since $x$
   has neighbours in neither $U_2$ nor $V_2$, and since by
   Lemma~\ref{lem:cpts}\ref{lem:cpts:d} we have $|V_2|>(\frac17-4\eta)n$, we
   conclude that $\delta\le d(x)\leq n-1-|U_2|-|V_2|$. In particular,
   $|U_2|<(\frac37-8\eta)n$ because $\delta\ge(\frac47-2\eta)n$, and therefore
   by~\ref{StabLem:47:a} we have   
   \[(\frac17-4\eta) n<|U_2|<(\frac17+6\eta)n\,.\]
   Next we want to derive analogous bounds for~$|V_2|$. For this purpose we first
   show that~$k=2$.

   Indeed, if we had $k=3$, then by~\ref{StabLem:47:b}
   \begin{equation*}
     \begin{split}
       d(x) & \leq n-1-|U_2|-|V_2|-|V_3| \\
       &\leq n-1-(\tfrac{1}{7}-4\eta)n-2(\tfrac{4}{21}-6\eta)n 
       <(\tfrac{10}{21}+16\eta)n<\delta\,,
     \end{split}
   \end{equation*}
   and this contradicts $\delta(G)\geq\delta$. Similarly, if $k\geq 4$, then by
   Lemma~\ref{lem:cpts}\ref{lem:cpts:d} we have $|V_i|\geq(\tfrac{1}{7}-4\eta)n$
   for each $i$, and hence
   \[d(x)\leq n-1-|U_2|-|V_2|-|V_3|-|V_4|<(\tfrac{3}{7}+16\eta)n<\delta\,,\]
   which too is a contradiction. It follows that $k=2$ as claimed. 

   Hence, we can argue analogously as before (for~$U_2$) that
   $|V_2|>(\frac37-8\eta)$ would contradict $d(x)\geq\delta$. Consequently,
   by~\ref{StabLem:47:a} we have
   \[(\frac17-4\eta) n<|V_2|<(\frac17+6\eta)n\,.\]
  
   We now argue that this yields a contradiction to~\eqref{eq:StabLem:co} in
   much the same way as we argued in the $r=r'+1=3$ case. Every vertex of $U_2$
   is adjacent to all but at most $n-|U_1|-\delta\le 10\eta n$ vertices of
   $V(G)\setminus U_1$. Since $|U_2|>20\eta n$, by
   Proposition~\ref{prop:match}\ref{prop:match:a} there is a matching $M_u$ in
   $U_2$ covering all but at most one vertex of $U_2$. Each edge of $M_u$ has at
   least $\delta-10\eta n\geq(\tfrac{4}{7}-12\eta)n$ common neighbours outside
   $U_1$.  Similarly, in $V_2$ there is a matching $M_v$ covering all but at
   most one vertex of $V_2$, each edge of which has at least
   $(\tfrac{4}{7}-12\eta)n$ common neighbours outside $V_1$. Since
   $\Gamma(u,v)=U_1\cap V_1$, we have $U_1\cap V_2=\emptyset$. It follows that
   every edge of~$M_v$ has more than half of the edges of $M_u$ in its common
   neighbourhood, and thus the edges $M_u\dcup M_v$ are triangle
   connected. Choosing greedily for each edge in $M_u\dcup M_v$ in succession a
   distinct common neighbour outside $M_u\dcup M_v$, we obtain a connected
   triangle factor with as many triangles as there are edges in $M_u\dcup
   M_v$. Since $|U_2|,|V_2|>(\frac17-4\eta) n$, we have
   $\CTF(G)>(\tfrac{3}{7}-12\eta)n-3>\sqp(n,\delta+\eta n)$,
   contradicting~\eqref{eq:StabLem:co}. This completes the proof that $\intr(G)$
   is an independent set.
 \end{factproof}
 
  It remains to show that each vertex $u\in\intr(G)$ is contained in at least
  $r'=3$ triangle components. Assume for a contradiction that this is not the
  case and that some vertex $u$ is only contained in $2$ triangle components, $C$ and
  $C'$. Let $U$ and $U'$, respectively, be the neighbours of~$u$ on edges in $C$ and
  $C'$. Without loss of generality $|U|\ge|U'|$. Because $\intr(G)$ is an
  independent set, $U$ and $U'$ are contained in the exteriors of $C$ and $C'$.
  By Lemma~\ref{lem:cpts}\ref{lem:cpts:c} there are thus no edges between~$U$
  and~$\bound(C')$. By Lemma~\ref{lem:cpts}\ref{lem:cpts:d} we have
  $\delta(G[U])\geq 2\delta-n$, and since $U\subset\bound(C)$ every edge of
  $G[U]$ is in $C$. It follows that we may apply Fact~\ref{fac:StabLem:greedy}
  to obtain
 
  \[\CTF(G)\geq
  \min\big(3\lfloor|U|/2\rfloor,3(2\delta-n),2\delta-n+|\bound(C')|\big)\,.\]
  Since $|U|\geq\delta/2$ we have
  $3\lfloor|U|/2\rfloor\geq(\tfrac{3}{7}-3\eta)n-2>\sqp(n,\delta+\eta n)$.
  By~\eqref{eq:sqpb} we have $3(2\delta-n)>\sqp(n,\delta+\eta n)$.
  Since~\eqref{eq:StabLem:co} holds, we conclude that
  $2\delta-n+|\bound(C')|<\sqp(n,\delta+\eta n)<(\frac27+2\eta)n$, and
  therefore $|\bound(C')|<(\frac17+6\eta)n$.
  
  Now any vertex in $\bound(C')$ has neighbours only in
  $\bound(C')\dcup\intr(G)$, and therefore
  $|\intr(G)|\geq\delta-|\bound(C')|\geq (\tfrac{3}{7}-8\eta)n$. The vertex $u$
  has neighbours only in $U'\subset\bound(C')$ and $U$, and
  therefore
  \[|U|\geq\delta-|U'|\geq\delta-|\bound(C')|\geq(\tfrac{3}{7}-8\eta)n\,.\]
  By Lemma~\ref{lem:cpts}\ref{lem:cpts:d} we have $\delta(G[U])\geq
  2\delta-n\geq(\tfrac{1}{7}-4\eta)n$, and since $|U|>(\tfrac{2}{7}-8\eta)n$ we
  obtain by Proposition~\ref{prop:match}\ref{prop:match:a} a matching $M$ in
  $U$ with at least $(\tfrac{1}{7}-4\eta)n$ edges. Now each vertex in
  $\intr(G)$ is adjacent to all but at most $n-\delta-|\intr(G)|\leq 10\eta n$
  vertices outside $\intr(G)$. In particular, each vertex in $\intr(G)$ is
  adjacent to all but at most $10\eta n$ vertices of~$M$, and is therefore a
  common neighbour of all but at most $10\eta n$ edges of~$M$. We now match
  greedily vertices of $\intr(G)$ with distinct edges of~$M$ forming triangles.
  Since $|\intr(G)|>|M|$, we will be forced to halt only when we come to a vertex
  $x\in\intr(G)$ which is not a common neighbour of any remaining edge of $M$,
  i.e., when we have used all but at most $10\eta n$ edges of $M$. It follows
  that we obtain a triangle factor $T$ with at least $(\tfrac{1}{7}-14\eta)n$
  triangles. Since each triangle uses an edge of $M\subset G[U]\subset
  G[\bound(C)]$, $T$ is a connected triangle factor, and we have $\CTF(G)\geq
  (\tfrac{3}{7}-42\eta)n>\sqp(n,\delta+\eta n)$ in contradiction
  to~\eqref{eq:StabLem:co}.
\end{proof} 
 
\section{Near-extremal graphs} 
\label{sec:ext}

In this section we provide the proof of Lemma~\ref{lem:ext-like}. To prepare this
proof we start with two useful lemmas. The first will be used to construct
squared paths and squared cycles from simple paths and cycles.
 
\begin{lemma}\label{lem:squaringpath} Given a graph $G$, let
  $T=(t_1,t_2,\ldots,t_{2l})$ be a path in $G$ and $W$ a set of vertices
  disjoint from $T$. Let $Q_1=(t_1,t_2)$,
  $Q_i=(t_{2i-3},t_{2i-2},t_{2i-1},t_{2i})$ for all $1<i\le l$, and
  $Q_{l+1}=(t_{2l-1},t_{2l})$. If there exists an ordering $\sigma$ of
  $[l+1]$ such that for each $i$ the vertices in $Q_{\sigma(i)}$ have at
  least $i$ common neighbours in $W$, then there is a squared path
  \[(q_1,t_1,t_2,q_2,t_3,t_4,q_3,\ldots,t_{2\ell},q_{\ell+1})\] in $G$,
  with $q_i\in W$ for each $i$, using every vertex of $T$.
 
  If $T$ is a cycle on $2l$ vertices we let instead
  $Q_1=(t_{2l-1},t_{2l},t_1,t_2)$,
  $Q_i=(t_{2i-3},t_{2i-2},t_{2i-1},t_{2i})$ for all $1<i\le l$, and
  $\sigma$ be an ordering on~$[l]$. Then, under the same conditions, we
  obtain a squared cycle~$C_{3l}^2$.
\end{lemma}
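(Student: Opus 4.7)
The plan is to use a straightforward greedy selection of the connector vertices $q_j \in W$ in the order prescribed by $\sigma$. I would process the indices $i = 1, 2, \dots, \ell+1$ in turn and, at step $i$, choose $q_{\sigma(i)} \in W$ to be any common neighbour of the vertices of $Q_{\sigma(i)}$ that has not been used in any previous step.

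The counting that makes this work is immediate: at step $i$ exactly $i-1$ vertices of $W$ are already taken, while by hypothesis $Q_{\sigma(i)}$ has at least $i$ common neighbours in $W$, so at least one admissible choice remains. Hence the greedy procedure never gets stuck, and after $\ell+1$ steps we have distinct connectors $q_1,\dots,q_{\ell+1}\in W$ with the required adjacencies.

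It then remains to verify that the sequence $(q_1,t_1,t_2,q_2,t_3,t_4,q_3,\dots,t_{2\ell-1},t_{2\ell},q_{\ell+1})$ is a squared path, i.e.\ that every pair of vertices at distance $1$ or $2$ in this sequence is an edge of $G$. The adjacencies among the $t_j$'s are already given by $T$, since for each $j\in[2\ell-1]$ the pair $(t_j,t_{j+1})$ is a path edge of $T$, and the pairs $(t_{2i-2},t_{2i-1})$ separated by a $q_i$ in the sequence are also path edges of $T$. All remaining pairs involve some $q_j$, and here the sets $Q_j$ have been chosen to contain precisely those two or four vertices of $T$ that end up within distance $2$ of $q_j$ in the final sequence, so the required edges come from $q_j$ being a common neighbour of $Q_j$.

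The cycle case is proved by exactly the same greedy loop, now on $[\ell]$; the connector $q_1$ plays the double role of gluing $(t_1,t_2)$ to $(t_{2\ell-1},t_{2\ell})$, which is precisely why $Q_1$ is enlarged to $\{t_{2\ell-1},t_{2\ell},t_1,t_2\}$. The resulting closed sequence has $2\ell+\ell=3\ell$ vertices and, by the same edge-check as above together with the cyclic wrap-around through $q_1$, forms a copy of $C_{3\ell}^2$. There is no genuine obstacle here: the statement is a bookkeeping lemma, and the only substantive point is that the hypothesis on $\sigma$ is tailored exactly so that the greedy common-neighbour selection succeeds (analogous to the trivial defect-free case of Hall's theorem).
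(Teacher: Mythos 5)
Your proof is correct and is essentially the paper's own argument: a greedy selection of $q_{\sigma(i)}$ in the order given by $\sigma$, using the count that at step $i$ at least $i$ common neighbours are available and only $i-1$ have been consumed. You spell out the adjacency verification and the cycle wrap-around in a bit more detail than the paper, but the approach is identical.
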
 
\begin{proof}We need only ensure that for each $i$ one can choose~$q_i$
  such that~$q_i$ is a common neighbour of $Q_i$ and the $q_i$ are
  distinct. This is possible by choosing for each $i$ in succession
  $q_{\sigma(i)}$ to be any so far unused common neighbour of
  $Q_{\sigma(i)}$.
\end{proof}  
 
The second lemma is a variant on Dirac's theorem and permits us to
construct paths and cycles of desired lengths which keep some `bad'
vertices far apart.
 
\begin{lemma}\label{lem:nicepathcycle} Let $H$ be a graph on $h$ vertices and
  $B\subset V(H)$ be of size at most $h/100$. Suppose that every vertex in
  $B$ has at least $9|B|$ neighbours in $H$, and every vertex outside $B$
  has at least $h/2+9|B|+10$ neighbours in $H$. Then for any given $3\leq
  \ell\leq h$ we can find a cycle~$T_\ell$ of length $\ell$ in $H$ on which
  no four consecutive vertices contain more than one vertex of
  $B$. Furthermore, if $x$ and $y$ are any two vertices not in $B$ and
  $5\leq \ell\leq h$, we can find an $\ell$-vertex path $T_\ell$ whose
  endvertices are $x$ and $y$ on which no four consecutive vertices contain
  more than one vertex of $B\cup\{x,y\}$.
\end{lemma}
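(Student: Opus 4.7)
The plan is to work almost entirely inside the very dense subgraph $H':=H\setminus B$, which has $h-|B|$ vertices and minimum degree at least $h/2+9|B|+10-|B|=h/2+8|B|+10$, comfortably above half its own order. In particular $H'$ is Hamiltonian and, by Bondy's pancyclicity theorem, contains cycles of every length from $3$ to $h-|B|$; it is also Hamilton-connected and enjoys Faudree--Schelp-type path-partition properties (for any $k$ disjoint pairs of vertices, with $k$ much smaller than the min-degree surplus, there are vertex-disjoint paths through the pairs whose union spans $V(H')$).

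For the cycle statement with $3\le \ell\le h-|B|$, I would just extract $T_\ell$ from $H'$ using pancyclicity; since no vertex of $B$ is used, the separation condition is vacuous. For $h-|B|<\ell\le h$, set $k:=\ell-(h-|B|)\in[1,|B|]$, pick any $B'=\{b_1,\ldots,b_k\}\subset B$, and for each $b_i$ greedily choose two distinct gateway neighbours $p_i,q_i\in V(H')\cap\Gamma(b_i)$ so that all $2k$ gateways are distinct. This is possible because each $b_i$ has at least $9|B|-|B|=8|B|$ neighbours in $V(H')$ while at most $2|B|-2$ choices need to be avoided. Apply the path-partition result inside $H'$ to the pairs $(q_i,p_{i+1})$ (cyclic indices) to obtain vertex-disjoint paths $P_1,\ldots,P_k$ covering $V(H')$; the requirement $|P_i|\ge 3$ for every $i$ reduces to $h-|B|\ge 3k$, which holds comfortably since $|B|\le h/100$ and $\ell\le h$. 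Concatenating $b_1P_1b_2P_2\cdots b_kP_k$ yields a cycle of length $k+\sum_i|V(P_i)|=\ell$ in which consecutive $B$-vertices are separated by at least three non-$B$ vertices, giving the desired condition.

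The path statement from $x$ to $y$ (with $\ell\ge 5$) is handled in parallel: when $\ell\le h-|B|$ I would invoke panconnectedness of $H'$ (paths of every length between any two vertices in a graph with minimum degree comfortably above half its order) to find an $x$-$y$ path on $\ell$ vertices entirely inside $H'$. When $\ell>h-|B|$ repeat the gateway construction for $k=\ell-(h-|B|)$ chosen bad vertices, now demanding that $x,y$ together with the $2k$ gateways form $2k+2$ distinct vertices of $V(H')$, and that the bad vertices are placed at least four positions from each endpoint. A Faudree--Schelp partition of $V(H')$ into $k+1$ paths matching the chosen endpoint pairs, with length at least $3$ on the middle pieces and length at least $4$ on the two end pieces adjacent to $x$ and $y$, completes the construction; the hypothesis $\ell\ge 5$ together with $|B|\le h/100$ gives ample slack for these requirements.

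The main obstacle is the path-partition result in $H'$: given $k$ disjoint pairs of vertices, produce vertex-disjoint paths through those pairs whose union is $V(H')$. Either one cites a Faudree--Schelp/Enomoto-type theorem (which applies because $\delta(H')\ge |V(H')|/2+k$ with slack $8|B|+10$), or one proves it directly by starting from a Hamilton path with any prescribed endpoints (guaranteed by Hamilton-connectedness of $H'$) and applying P\'osa-style rotations to redistribute endpoints until they match the prescribed pairs. A minor secondary point is controlling individual path lengths rather than the total; since we only need each $|P_i|\ge 3$ and $|B|\le h/100$ provides large slack, any surplus can be absorbed into a single path by a local exchange.
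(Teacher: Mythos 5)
Your strategy is a genuine alternative to the paper's: you stay inside $H'=H\setminus B$ and invoke pancyclicity, panconnectedness, and a prescribed-matching Hamilton-cycle theorem, feeding in only as many vertices of $B$ as the target length forces. The paper instead threads a short path through \emph{every} vertex of $B$, with three-good-vertex buffers pre-installed between consecutive bad vertices, extends it to a spanning path and cycle by direct edge-counting, and then prunes the portion disjoint from $B$; both routes fall back on a panconnectedness-type statement for $H-B$ when $\ell$ is comfortably below $h-|B|$.

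The step you yourself flag as ``minor'' is where the argument actually breaks. A Hamilton cycle of $H'+F$ through the dummy matching $F=\{q_ip_{i+1}\}$ controls the \emph{total} number of vertices in the arcs but gives no control over the length of any individual arc: one arc could be a single edge $q_ip_{i+1}$, and then your cycle contains $b_i\,q_i\,p_{i+1}\,b_{i+1}$ as four consecutive vertices, two of them in $B$ --- exactly what the lemma forbids. Your sentence ``the requirement $|P_i|\ge 3$ for every $i$ reduces to $h-|B|\ge 3k$'' confuses a necessary aggregate bound with a per-arc guarantee, and ``any surplus can be absorbed into a single path by a local exchange'' is not a proof: one would have to produce a vertex that is simultaneously a common neighbour of the short arc's two endpoints, not itself a gateway, \emph{and} removable from its own arc (its two arc-neighbours must be adjacent), and the density hypotheses do not obviously supply this. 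A repair that stays inside your framework is to surround each chosen $b_i$ by a five-vertex buffer $\pi_i=(p_i,x_i,b_i,y_i,q_i)$, delete $\{x_i,y_i\}$ before running the matching-Hamilton-cycle argument with dummy edges $p_iq_i$, and splice each $\pi_i$ back into its dummy edge afterwards; then even an arc of length one leaves at least four good vertices between consecutive bad ones. But as written your proof does not establish the separation property the lemma exists to deliver. The paper's construction never confronts this difficulty because the separation is built into $P$ at the outset and is automatically preserved under its extend-then-prune steps.
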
 
\begin{proof} 
  If we seek a path in $H$ from $x$ to $y$ then we create a `dummy edge'
  between $x$ and $y$. If we seek a cycle, let $xy$ be any edge of $H-B$.
 
  First we construct a path $P$ in $H$ covering $B$ with the desired
  property. Let $B=\{b_1,b_2,\ldots,b_{|B|}\}$. For each $1\leq i\leq
  |B|-1$, choose a vertex $u_i\in H-B$ adjacent to $b_i$ and a vertex
  $v_i\in H-B$ adjacent to~$b_{i+1}$. Because both $u_i$ and $v_i$ have
  $h/2+9|B|+10$ neighbours in~$H$, they have at least $18|B|+20$ common
  neighbours. At most $3|B|$ of these are either in $B$ or amongst the
  chosen $u_j,v_j$, and so we can find a so far unused vertex $w_i$
  adjacent to $u_i$ and $v_i$. Since we require only $|B|-1$ vertices
  $w_1,\ldots,w_{|B|-1}$ we can pick the vertices greedily.
 
  We let $v_0$ be yet another vertex adjacent to $b_1$, and $u_{|B|}$
  adjacent to~$b_{|B|}$, and choose any further vertices
  $w_0,v_0,w_{|B|},u_{|B|}$ such that
  \[P=(x,y,u_0,w_0,v_0,b_1,u_1,w_1,v_1,b_2,\ldots,v_{|B|-1},b_{|B|},u_{|B|},w_{|B|},v_{|B|})\]
  is a path on $4|B|+5$ vertices.
 
  Now we let $P'$ be a path extending $P$ in $H$ of maximum length. We
  claim that $P'$ is in fact spanning. Suppose not: let $u$ be an
  end-vertex of $P'$ and $v$ a vertex not on $P'$. Since $P'$ is maximal
  every neighbour of~$u$ is on $P'$, so $v(P')>h/2+9|B|+10$. If there
  existed an edge $u'v'$ of $P'-P$ with $u'u$ and $v'v$ edges of $H$, with
  $v'$ closer to $u$ on $P'$ than~$u'$, then we would have a longer path
  extending $P$ in $H$. Counting the edges leaving $u$ and $v$ yields a
  contradiction.
 
  Finally we let $u$ and $v$ be the end-vertices of the spanning path
  $P'$. If~$uv$ is an edge of $H$, or if $u'v'$ is an edge of $P'-P$, with
  $u'$ nearer to~$u$ on~$P'$ than~$v'$, such that $uv'$ and $u'v$ are edges
  of $H$, then we obtain a cycle $T$ spanning $H$ and containing $P$ as a
  subpath. Again edge counting reveals that such an edge must exist.
 
  To obtain a cycle~$T_\ell$ with $h-|B|-2\leq \ell<h$ we take $u$ to be an
  end-vertex of the path $T-P$ and $v$ its successor on $T-P$. If we can
  find two further vertices $u'$ and $v'$ on $T-P$ (in that order from $u$
  along $T-P$) with $h-\ell$ vertices between them and with $uu'$ and $vv'$
  edges of $H$ then we would obtain a cycle $T_\ell$ of length
  $\ell$. Again simple edge counting reveals that such a pair of vertices
  exists. To obtain a cycle~$T_\ell$ with $3\leq \ell<h-|B|-2$ we note that
  $H-B$ has minimum degree $h/2+8|B|+10>(h-|B|)/2+1$ and thus contains a
  cycle of every possible length using the edge $xy$.
 
  The cycle $T_\ell$ satisfies the condition that no four consecutive
  vertices contain more than one vertex of $B$, since either it preserves
  $P$ as a subpath or it contains no vertices of $B$ at all. Similarly the
  path from~$x$ to $y$ within $T_\ell$ satisfies the required conditions.
\end{proof}  
 
Before embarking upon the proof of Lemma~\ref{lem:ext-like} we give an outline
of the method.
We recall that the Szemer\'edi partition supplied to the Lemma is
essentially the extremal structure. We shall show that the underlying graph
either also has an extremal structure, or possesses features which actually
lead to longer squared paths and cycles than required for the conclusion of
the Lemma. This is complicated by the fact that the Szemer\'edi partition
is insensitive both to mis-assignment of a sublinear number of vertices and
to editing of a subquadratic number of edges: we must assume, for example,
that although the vertex set $I$ in the reduced graph $R$ is independent,
the vertex set $\bigcup I$ may fail to contain some vertices of $G$ with no
neighbours in $\bigcup I$, and may contain a small number of edges meeting
every vertex. However, observe that by the definition of an
$(\eps,d)$-regular partition, there are no vertices of $\bigcup I$ with
more than $(\eps+d)n$ neighbours in $\bigcup I$.  Fortunately, it is
possible to apply the following strategy in this case.

We start by separating those vertices
with `few' neighbours in $\bigcup I$, which we shall collect in a set~$W$,
and those with `many'. We are then able to show (as
Fact~\ref{fac:2disjoint} below) that, if there are two vertex disjoint
edges in~$W$,
%
% after making this reassignment, if edges remain (except possibly at one
% vertex)---that is, if `few' turns out to be non-zero---
% then it is possible to construct very long squared paths and cycles by making
% use of both the sets $\bigcup B_1$ and $\bigcup B_2$ (which are `supposed' not
% to be in the same triangle component).
%
then the sets $\bigcup B_1$ and $\bigcup B_2$ are in the same triangle
component of~$G$ (`unexpectedly', since $B_1$ and $B_2$ are in different
triangle components in~$R$). We shall show that in this case it is possible
to construct very long squared paths and cycles by making use of
Lemma~\ref{lem:bl}.

Hence we can assume that there are not two disjoint edges in~$W$, which in turn 
implies that~$W$ is almost independent and will give us rather precise control
about the size of~$W$. In addition, the minimum degree condition will guarantee that
almost every edge from $W$ to the remainder of $G$ is present. We would like
to then say that in $V(G)\setminus W$ we can find a long path, which together
with vertices from $W$ forms a squared path (and similarly for squared
cycles). Unfortunately since $G[W,V(G)\setminus W]$ is not necessarily a
complete bipartite graph, this statement is not obviously true: although by
definition no vertex outside $W$ has very few neighbours in $W$, it is
certainly possible that two vertices outside $W$ could fail to have a common
neighbour in $W$. But the statement is true for a path possessing sufficiently
nice properties---specifically, satisfying the conditions of
Lemma~\ref{lem:squaringpath}---and the purpose of
Lemma~\ref{lem:nicepathcycle} is to provide paths and cycles with those nice
properties. The remainder of our proof, then, consists of setting up
conditions for the application of Lemma~\ref{lem:nicepathcycle}.
 
\begin{proof}[Proof of Lemma~\ref{lem:ext-like}] Given $\nu>0$, suppose the
  parameters $\eta>0$ and $d>0$ satisfy the following inequalities.
  \begin{equation}\label{ext:chooseetad}
    \eta\le \frac{\nu^4}{10^8} \qquad \text{and} \qquad d\le \frac{\nu^4}{10^8}
  \end{equation}
  Given $d>0$, Lemma~\ref{lem:bl} returns a constant $\eps_\subsc{EL}>0$. We set
  \begin{equation}\label{ext:chooseeps}
    \eps_0=\min\big(\frac{\nu^4}{10^8},\eps_\subsc{EL}\big) \,.
  \end{equation}
  Given $m_\subsc{EL}$ and $0<\eps<\eps_0$, Lemma~\ref{lem:bl} returns a constant
  $n_\subsc{EL}$. We set
  \begin{equation}\label{ext:chooseN}
    N=\max\big(1000m_\subsc{EL}^4,100\eta^{-1}\nu^{-1},n_\subsc{EL}\big) \,.
  \end{equation}
  % By the conditions of the lemma, we can assume the following.
  % \begin{enumerate}[label={\irom}]
  %   \item\label{ext:assm1}
  %     $G$ is an $n$-vertex graph with $n>N$ and $\delta(G)=
  %     \delta>\frac{n}{2}+\nu n$,
  %   \item\label{ext:assm2}
  %     $R$ is an $(\varepsilon,d)$-reduced graph of~$G$ of order $m\le
  %     m_\subsc{EL}$,
  %     % (where $m_\subsc{EL}$ is as in Lemma~\ref{lem:bl}).
  %   \item\label{ext:assm3}
  %     each triangle component of $R$ contains a copy of $K_4$.
  %   \item\label{ext:assm4}
  %     $V(R)=I \dcup B_1 \dcup B_2\dcup\cdots\dcup B_k$ with
  %     $k\geq 2$,
  %   \item\label{ext:assm5}
  %     $I$ is an independent set with $|I|\ge(n-\delta-11\eta n)m/n$,
  %   \item\label{ext:assm6}
  %     for each $i\in[k]$ we have $0<|B_i|\le 19m(2\delta-n)/(10n)$, and for every
  %     $j\in[k]\setminus\{i\}$ there are no edges between $B_i$ and $B_j$ in $R$.
  %  \end{enumerate}
  Now let~$G$, $R$, and the partition $V(R)=I\dcup B_1\dcup\dots\dcup B_k$
  satisfy conditions \ref{ext:assm1}--\ref{ext:assm6} of the lemma.

  If $\delta(G)=\delta\geq \frac{2n-1}{3}$ then we can appeal to
  Theorem~\ref{thm:FanKierstead} to find a spanning squared path in $G$; if
  $\delta\geq \frac{2n}{3}$ then we can appeal to Theorem~\ref{thm:KSSPosa} to
  find $C^2_\ell$ for each $\ell\in [3,n]\setminus\{5\}$. Therefore, the
  definition of $\sqp(n,\delta)$ and $\sqc(n,\delta)$ imply that we may assume
  $\delta<2n/3$ in the following, and that we only need to find
  \begin{equation}\label{eq:ext:1120}
    \text{squared paths and squared cycles of length at most
      $11n/20$.}
  \end{equation}

  We now start by investigating the sizes of~$I$ and of the~$B_i$.
  Define $\delta' = (\delta/n-d-\eps)m$.
  Since~$R$ is an $(\eps,d)$-reduced graph we have
  \begin{equation}\label{eq:ext:delta'}
    \delta(R)\ge \delta' = (\delta/n-d-\eps)m\,.
  \end{equation}
  Observe that moreover
  \begin{equation}\label{eq:IisSmall}
    |I|\le m-\delta'\le \Big(1-\frac{\delta}{n}+d+\varepsilon\Big)m\;, 
  \end{equation} 
  by~\ref{ext:assm5}
  because clusters in $I$ have $\delta'$ neighbours outside $I$ in $R$. For
  $i\in[k]$, fix a cluster $C\in B_i$. By assumption~\ref{ext:assm6} $C$ has
  neighbours only in $B_i\cup I$ in
  $R$. Since \[\delta'\le\deg(C)=\deg(C,B_i\cup I)\le \deg(C,B_i)+|I|\le
  \deg(C,B_i)+m-\delta'\,,\] we have
  \begin{equation*}
    \begin{split}
      |B_i| & >\deg(C,B_i)\ge 2\delta'-m
      \ge\frac{m}{n}\big(2(\delta-dn-\eps n)-n\big) \\
      & =\frac{m}{n}\big(2\delta-n-(d+\eps)n\big)\,.
    \end{split}
  \end{equation*}
  Now since $2\delta-n\ge 2\nu n$ by~\ref{ext:assm1}, we conclude
  from~\eqref{ext:chooseetad} and~\eqref{ext:chooseeps} that
  \begin{equation}\label{eq:BiBig} 
    |B_i|\ge\frac{2m(2\delta-n)}{3n}\ge \frac43\nu m\;. 
  \end{equation} 

  We next show that each $B_i$ is part of exactly one triangle
  component of $R$.

  \begin{fact}\label{fac:BiTriConn} For each $1\le i\le k$ the following holds.
    All edges in $R[B_i]$ are triangle connected in $R$.
  \end{fact}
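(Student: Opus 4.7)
The plan is to show that all edges of $R[B_i]$ belong to a single triangle component of $R$ by a chaining argument. The key technical step exploits that, by~\ref{ext:assm6}, $\Gamma(u)\subseteq B_i\cup I$ for every $u\in B_i$. Combined with the minimum-degree bound $\delta(R)\ge\delta'$ from~\eqref{eq:ext:delta'} (which gives $|\Gamma(u)|\ge\delta'$ for each $u\in B_i$), this allows me to run inclusion--exclusion inside the universe $B_i\cup I$ to estimate triple common neighbourhoods: for any $u,v,w\in B_i$,
\[
  |\Gamma(u)\cap\Gamma(v)\cap\Gamma(w)|\ge 3\delta'-2|B_i\cup I|.
\]
Plugging in $|B_i|\le\tfrac{19(2\delta-n)}{10n}m$ from~\ref{ext:assm6} and $|I|\le(1-\delta/n+d+\varepsilon)m$ from~\eqref{eq:IisSmall}, the right-hand side simplifies to roughly $\tfrac{9-13\delta/n}{5}m-O((d+\varepsilon)m)$, which is positive because $\delta/n<2/3$ (cf.~\eqref{eq:ext:1120}) and the parameters $d,\varepsilon$ are sufficiently small.

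Two consequences follow at once. First, whenever $uv$ and $uw$ are edges of $R[B_i]$ sharing the endpoint $u$, picking $z$ in the above nonempty intersection produces a book $\{u,v,w,z\}$ in which $uvz$ and $uwz$ are both triangles of $R$ sharing the edge $uz$; hence $uv$ and $uw$ lie in the same triangle component of~$R$. Second, the analogous two-set inclusion--exclusion in the universe $B_i$ gives $|\Gamma(u)\cap\Gamma(u')\cap B_i|\ge 2(\delta'-|I|)-|B_i|>0$ for any $u,u'\in B_i$, so $R[B_i]$ has diameter at most two.

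To finish, I fix an arbitrary reference edge $e_0=u_0v_0\in R[B_i]$, and for any other edge $u'v'\in R[B_i]$ pick a walk $u_0=x_0,x_1,\ldots,x_k=u'$ of length $k\le 2$ in $R[B_i]$. Applying the first consequence in turn at each of the vertices $u_0,x_1,\ldots,x_{k-1},u'$ triangle-connects $e_0$ to $u_0x_1$ to $x_1x_2$ to $\ldots$ to $x_{k-1}u'$ to $u'v'$, so every edge of $R[B_i]$ lies in the same triangle component of~$R$ as $e_0$. The main obstacle is verifying positivity of the triple common-neighbourhood estimate: the inequality is tight and relies essentially on the hypothesis $\delta<2n/3$, but once it is established the remaining chaining is routine.
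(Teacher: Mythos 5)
Your proof is correct, and it takes a genuinely different route from the paper's. The paper argues by contradiction: assuming two adjacent edges $PQ,PQ'$ of $R[B_i]$ lie in distinct triangle components $C,C'$, it shows $P$ sends at least $2\delta'-m$ edges of $R[B_i]$ into each of $C$ and $C'$, forcing $|B_i|\ge 2(2\delta'-m)$, which contradicts $|B_i|\le\tfrac{39}{20}(2\delta'-m)$. The crux of that count is a case split on whether $P$ has a $C$-edge to $I$, and the second case uses the independence of $I$ in an essential way (to place $\Gamma(PP')$ inside $B_i$). Your argument avoids the contradiction and the independence of $I$ altogether: the triple inclusion--exclusion in the universe $B_i\cup I$ (valid because \ref{ext:assm6} traps all of $\Gamma(B_i)$ there) directly produces, for any two adjacent edges $uv,uw$ of $R[B_i]$, a common apex $z$ giving the book $uvz,uwz$ on the spine $uz$, which ties $uv$ and $uw$ into one triangle component in one stroke. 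Coupled with the easy diameter-$2$ bound for $R[B_i]$, chaining along the line graph finishes it. What the paper's version buys is a cleaner sufficient hypothesis (only $|B_i|<2(2\delta'-m)$ is needed, no reference to $\delta<2n/3$); what yours buys is a shorter, purely quantitative argument that never touches the independence of $I$. One small quibble: your positivity estimate is not actually tight --- $\tfrac{9-13\gamma}{5}$ evaluates to $\tfrac{1}{15}$ at $\gamma=2/3$ and only vanishes at $\gamma=9/13$, so the margin is comfortable once $d,\varepsilon$ are small; and $\delta<2n/3$ is established just \emph{before} \eqref{eq:ext:1120} (which labels the consequent statement about lengths), so the cross-reference is a touch loose, but the fact you need is indeed in force at that point of the proof.
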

  \begin{factproof}[Proof of Fact~\ref{fac:BiTriConn}]
    By assumption~\ref{ext:assm6} we have
    \begin{equation}
      \label{eq:BiTriConn}
      |B_i|\le 19m(2\delta-n)/(10n)\le 39(2\delta'-m)/20\,,      
    \end{equation}
    where the second
    inequality comes from~\eqref{ext:chooseetad}
    and~\eqref{ext:chooseeps}. Since we have $\delta_R(B_i)\ge 2\delta'-m>|B_i|/2$,
    the graph $R[B_i]$ is connected. It follows that if there are two edges in
    $R[B_i]$ which are not triangle-connected, then there are two adjacent
    edges in $R[B_i]$ with this property. That is, there are vertices~$P$,~$Q$
    and~$Q'$ of $B_i$ such that $PQ$ is in triangle component~$C$ and $PQ'$
    is in triangle component~$C'$ with $C\neq C'$.
    
    We now show that there are at least $2\delta'-m$ edges leaving $P$ in
    $R[B_i]$ which are in $C$. There are two possibilities. First, suppose
    there are no $C$-edges from $P$ to $I$. In this case, the common
    neighbourhood $\Gamma(PQ)$ must lie entirely in $B_i$. Every vertex of
    $\Gamma(PQ)$ makes a $C$-edge with~$P$, and we have $|\Gamma(PQ)|\ge
    2\delta'-m$ as required. Second, suppose that there is a $C$-edge $PP'$
    with $P'\in I$. Since $I$ is an independent set in~$R$, the
    set~$\Gamma(PP')$ lies entirely within $B_i$, and has size at least $2\delta'-m$. Again, every edge
    from $P$ to $\Gamma(PP')$ is a $C$-edge, as desired.
    
    By the identical argument, there are at least $2\delta'-m$ edges leaving $P$
    in $R[B_i]$ which are in $C'$. Since no edge is in both $C$ and $C'$,
    there are at least $2(2\delta'-m)$ edges leaving $P$ in $R[B_i]$, so
    $|B_i|\ge 2(2\delta'-m)$. This contradicts~\eqref{eq:BiTriConn}.
    It follows that all edges of $B_i$ are triangle connected, as desired.
  \end{factproof}

  We next define a set~$W$ of those vertices in~$G$ which have few neighbours in
  $\bigcup I$. The intuition is that~$W$ consists of $\bigcup I$ and only a few
  more vertices of~$G$. 
  To simplify notation, we set
  $\xi=\sqrt[4]{\eps+d+11\eta}$. By~\eqref{ext:chooseetad}
  and~\eqref{ext:chooseeps}, we have
  \begin{equation}\label{ext:smallxi}
    \xi\le\nu/100\,.
  \end{equation}
  Let $W$ be the vertices of $G$ which do not have more than $\xi n$ neighbours
  in $\bigcup I$.  Since $\xi>d+\eps$, by the independence of $I$ and by the
  definition of an $(\eps,d)$-regular partition, we have $\bigcup I\subseteq W$.
  By assumption~\ref{ext:assm5} we have $|I|\ge(n-\delta-11\eta n)m/n$. Hence every
  edge in $W$ has at least
  \begin{equation}\label{eq:ext:Wedges}
    2(\delta-\xi n)-\Big(n-|\bigcup
    I|\Big)>\frac{\delta-(2\delta-n)}{16}
  \end{equation}
  common neighbours outside $\bigcup I$, where we use assumption~\ref{ext:assm1} that
  $2\delta-n> 2\nu n$,~\eqref{ext:chooseetad} and~\eqref{ext:smallxi}.

  By this observation, the next fact implies that we are done
  if there are two vertex disjoint edges in~$W$.

  \begin{fact}\label{fac:2disjoint} 
    If $u_1v_1$ and $u_2v_2$ are vertex disjoint edges of~$G$ such that for
    $i=1,2$ the edge
    $u_iv_i$ has at least $\delta-(2\delta-n)/16$ common neighbours outside
    $\bigcup I$, then $G$ contains $P^2_{\sqp(n,\delta)}$ and
    $C^2_\ell$ for each $\ell\in [3,\sqc(n,\delta)]\setminus\{5\}$.
  \end{fact}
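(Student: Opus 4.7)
I will apply the Embedding Lemma: Lemma~\ref{lem:bl}\ref{lem:bl:3} produces a squared path from $u_1v_1$ to $u_2v_2$, while Lemma~\ref{lem:bl}\ref{lem:bl:1} and Lemma~\ref{lem:bl}\ref{lem:bl:2} produce the squared cycles (the latter needing a $K_4$ in every triangle component of $R$, which is supplied by~\ref{ext:assm3}). Since $\sqp(n,\delta),\sqc(n,\delta)\le 11n/20$ by~\eqref{eq:ext:1120}, it suffices to exhibit a single triangle component $C$ of $R$, a connected triangle factor $T\subset C$ of size at least $11m/(20(1-d))$, and edges $X_1Y_1,X_2Y_2\in C$ such that each $u_iv_i$ has at least $2d(n/m)$ common neighbours in both $X_i$ and $Y_i$.

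\textbf{Key steps.} By Fact~\ref{fac:BiTriConn} together with the separation condition~\ref{ext:assm6}, the triangle components of $R$ are in bijection with the parts $B_a$; write $C_a$ for the triangle component containing $R[B_a]$. The first task is to identify an index $a\in[k]$ such that both edges $u_iv_i$ have many common neighbours in $\bigcup B_a$. The key observation is that any vertex in a cluster of $B_b$ has all its regular-pair neighbours in $\bigcup B_b\cup\bigcup I$, so if the cluster of $u_i$ or $v_i$ lay in some $B_b$, then all but at most $(\eps+d)n$ common neighbours of $u_iv_i$ outside $\bigcup I$ would sit in $\bigcup B_b$. The hypothesis, together with $\delta<2n/3$, forces this count to exceed $|\bigcup B_b|\le 19(2\delta-n)/10$ (by~\ref{ext:assm6}), a contradiction. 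Hence both $u_i$ and $v_i$ have their clusters in $I$, and their common regular-pair neighbours outside $\bigcup I$ genuinely spread across several $\bigcup B_j$'s. Since $k\le 3/(4\nu)$ by~\eqref{eq:BiBig}, averaging yields an index $a_i\in[k]$ for which $u_iv_i$ has $\Omega(\nu n)$ common neighbours in $\bigcup B_{a_i}$; a second counting step comparing both sums against the uniform bound $|\bigcup B_j|\le 19(2\delta-n)/10$ then forces $a_1=a_2=:a$.

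Next I locate the edges $X_iY_i\in R[B_a]$: with $\Omega(\nu n)$ common neighbours of $u_iv_i$ distributed among the $|B_a|\le 19m(2\delta-n)/(10n)$ clusters of $B_a$ and $d\ll\nu$, a simple averaging shows that many clusters of $B_a$ contain at least $2d(n/m)$ common neighbours of $u_iv_i$. Using the high minimum degree $\delta_R(B_a)\ge 2\delta'-m\ge 2\nu m$, an edge of $R[B_a]$ between two such clusters is then guaranteed. The same minimum degree, combined with a Corr\'adi--Hajnal-type argument inside $R[B_a\cup I]$, yields the desired connected triangle factor $T\subset C_a$ of size $\ge 11m/(20(1-d))$. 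Applying Lemma~\ref{lem:bl}\ref{lem:bl:3} with $(C_a,T,X_1Y_1,X_2Y_2)$ gives $P^2_{\sqp(n,\delta)}$ starting in $u_1,v_1$ and ending in $u_2,v_2$, while Lemma~\ref{lem:bl}\ref{lem:bl:1} and~\ref{lem:bl:2} deliver $C^2_\ell$ for all $\ell\in[3,\sqc(n,\delta)]\setminus\{5\}$.

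\textbf{Main obstacle.} The crucial step is showing that a single index $a\in[k]$ accommodates both disjoint edges $u_iv_i$. The subtlety is that in the (forced) sub-case where all four endpoints $u_1,v_1,u_2,v_2$ have clusters in $I$, the common neighbours of the two edges may a priori live in disjoint $\bigcup B_j$'s; the overlap is forced only by combining the size bound $|\bigcup B_j|\le 19(2\delta-n)/10$ with the lower bound $\delta-(2\delta-n)/16$ on both common neighbourhoods, which just barely suffice thanks to the inequality $\delta<2n/3$.
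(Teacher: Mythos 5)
Your proposal has a genuine gap: the entire plan hinges on finding, inside a \emph{single} triangle component $C_a$ of $R$, a connected triangle factor of size at least $11m/(20(1-d))$, to which Lemma~\ref{lem:bl}\ref{lem:bl:3} is then applied once. But the triangle components of $R$ are in bijection with the parts $B_1,\dots,B_k$ (as you note), and $I$ is independent in $R$, so (since $|I|>|B_a|/2$ throughout the relevant range of $\delta$) every triangle in $R[B_a\cup I]$ uses at least two clusters of $B_a$. Hence any connected triangle factor in $C_a$ covers at most $\tfrac32|B_a|$ clusters. The lower bound on $|B_a|$ is only $|B_a|\ge\tfrac23(2\delta-n)m/n$ from~\eqref{eq:BiBig}, so $|T|\le(2\delta-n)m/n$ is entirely possible for the $B_a$ your averaging picks out. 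Feeding that into Lemma~\ref{lem:bl}\ref{lem:bl:3} produces a squared path on fewer than $(1-d)(2\delta-n)$ vertices, strictly less than $\sqp(n,\delta)\approx\tfrac32(2\delta-n)$ when $\delta$ is close to $(\tfrac12+\nu)n$. The bound $|T|\ge 11m/(20(1-d))$ you claim from a ``Corr\'adi--Hajnal-type argument inside $R[B_a\cup I]$'' is therefore false in general; no argument can beat the combinatorial ceiling of $\tfrac32|B_a|$ in a single component. For the same reason, invoking Lemma~\ref{lem:bl}\ref{lem:bl:1} and~\ref{lem:bl:2} does not help either: in the near-extremal situation $\CTF(R')\le\tfrac32\max_a|B_a|$, which is again too small for the worst case $|B_a|$.

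The step you identify as the main obstacle (forcing $a_1=a_2$) is not the real difficulty, and indeed is not needed. The paper's proof never locates the clusters of $u_1,v_1,u_2,v_2$ and never tries to confine both edges to one part; it shows instead that the set $D$ of clusters outside $I$ in which either edge fails to have $\ge 2dn/m$ common neighbours is so small that one can pick cluster-edges $XY\in R[B_1]$ and $X'Y'\in R[B_2]$, both of whose endpoints lie outside $D$. It then builds triangle factors $T_1\subset B_1\cup I$ and $T_2\subset B_2\cup I$ from a matching across both parts, applies Lemma~\ref{lem:bl}\ref{lem:bl:3} \emph{twice} --- once from $u_1v_1$ to $u_2v_2$ through $T_1$ and once from $u_2v_2$ back to $u_1v_1$ through $T_2$ --- and concatenates the two squared paths into a squared cycle. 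This two-leg construction is precisely what doubles the available length and closes the gap that your single-$B_a$ approach cannot. The short cycle lengths $\ell\in[3,3n/m]\setminus\{5\}$ are handled separately by applying Lemma~\ref{lem:bl} directly to a copy of $K_4$ in $B_1$.
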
 
  \begin{factproof}[Proof of Fact~\ref{fac:2disjoint}]
    Let $D'$ be the set of clusters $C\in V(R)\setminus I$ such that~$u_1v_1$
    has at most $2dn/m$ common neighbours in $C$. By the hypothesis, $u_1v_1$
    has at least $\delta-(2\delta-n)/16$ common neighbours outside~$\bigcup I$.
    Of these, at most $\eps n$ are in the exceptional set $V_0$ of the regular
    partition, and at most $2dn|D'|/m$ are in $\bigcup D'$. The remaining
    common neighbours must all lie in $\bigcup(V(R)\setminus(I\cup D')$, and
    hence we have the inequality
    \begin{equation*}
      \begin{split}
        \delta-\frac{2\delta-n}{16}-\eps n-\frac{2dn|D'|}{m}
        & \leq (m-|I|-|D'|)\frac{n}{m} \\
        & \leBy{\ref{ext:assm5}} n-(n-\delta-11\eta n)-|D'|\frac{n}{m}\,.
      \end{split}
    \end{equation*}
    Simplifying this, we obtain
    \[\frac{n-2dn}{m}|D'|\leq 11\eta n+\eps n+\frac{2\delta-n}{16}\,, \]
    and by~\eqref{ext:chooseetad}
    and~\eqref{ext:chooseeps}, we get $|D'|\le (2\delta-n)m/(14n)$.

    Now let $D$ be the set of clusters $C\in V(R)\setminus I$ such that either
    $u_1v_1$ or~$u_2v_2$ has at most $2dn/m$ common neighbours in $C$. The same
    analysis holds for $u_2v_2$, so we
    obtain 
    \begin{equation}\label{eq:ext:D}
      |D|\le\frac{(2\delta-n)m}{7n}\,.
    \end{equation}
    Therefore, we conclude from~\eqref{eq:BiBig} that $B_1\setminus
    D\neq\emptyset$.  Take $X\in B_1\setminus D$ arbitrarily. We have
    \begin{equation*}
    \begin{split}
      \deg(X,B_1)
      & \geBy{\ref{ext:assm6}} \deg(X)-|I|
      \ge \delta'-|I|
      \geByRef{eq:IisSmall} \delta'-\Big(1-\frac{\delta}{n}+d+\eps\Big)m \\
      & \geByRef{eq:ext:delta'} \Big(\frac{\delta}{n}-d-\eps\Big)m
          -\Big(1-\frac{\delta}{n}+d+\eps\Big)m \\
      & \!\!\!\!\geBy{\eqref{ext:chooseetad},\eqref{ext:chooseeps}}
        \frac12(2\delta-n)\frac mn
      \gByRef{eq:ext:D} |D|
      \,.
    \end{split}
    \end{equation*}
    Thus there exists a cluster $Y\in \Gamma(X)\cap(B_1\setminus D)$.  Hence we
    have clusters $X,Y\in B_1\setminus D$ such that $XY\in E(R)$.  Analogously,
    we can find clusters $X',Y'\in B_2\setminus D$ such that $X'Y'\in E(R)$.
 
    Since $\delta_R(B_1),\delta_R(B_2)\geq \delta'-|I|\ge 2\delta'-m$, we can
    find greedily a matching~$M$ in $R[B_1\cup B_2]$ with $\delta'-|I|$
    edges. Since every cluster in~$I$ has at most $m-|I|-\delta'$ non-neighbours
    outside $I$, every cluster in~$I$ forms a triangle with at least
    $|M|-(m-|I|-\delta')=2\delta'-m$ edges of~$M$. 
    In addition, by assumption~\ref{ext:assm5}, \eqref{ext:chooseetad}, and since
    $\delta<2n/3$ we have $|I|>(\frac13-11\eta)m\ge\frac14 m$.
    Therefore we may choose greedily
    clusters in~$I$ to obtain a set~$T$ of at least 
    \begin{equation*}
      \min\big\{ 2\delta'-m, |I| \big\}
      \ge \min\Big\{ 2\delta'-m, \frac14 m \Big\}
    \end{equation*}
    vertex-disjoint triangles formed from edges of~$M$ and clusters of~$I$.
    Let~$T_1$ be the triangles of~$T$ contained in $B_1\cup I$, and~$T_2$ those
    contained in $B_2\cup I$.
    
    By Fact~\ref{fac:BiTriConn}, since each triangle in~$T_1$ contains an
    edge of $B_1$, all triangles in~$T_1$ are in the same triangle
    component as the edge $XY$.  Similarly all the triangles in~$T_2$ are
    in the same triangle component as the edge $X'Y'$.
      
    Noting that $\eps$ satisfies~\eqref{ext:chooseeps} and $n>N$
    satisfies~\eqref{ext:chooseN}, we can apply Lemma~\ref{lem:bl} with
    $X_1=X_2=X$, $Y_1=Y_2=Y$ to find a squared path starting with $u_1v_1$ and
    finishing with $u_2v_2$ using the triangles~$T_1$. Similarly, using
    Lemma~\ref{lem:bl} with $X_1=X_2=X'$, $Y_1=Y_2=Y'$ we find a squared path
    (intersecting the first only at $u_1$, $v_1$, $u_2$, and~$v_2$) starting
    with $u_2v_2$ and finishing with $u_1v_1$ using the triangles
    $T_2$. Choosing appropriate lengths for these squared paths and
    concatenating them we get a squared cycle $C^2_{\ell}$ in~$G$, for any
    $36(m_\subsc{EL}+2)^3\leq \ell\leq
    3(1-d)\min\{2\delta'-m,m/4\}n/m$. Applying Lemma~\ref{lem:bl} to the copy of
    $K_4$ in $B_1$ directly we obtain $C^2_\ell$ for each $\ell\in
    [3,3n/m]\setminus\{5\}$. By~\eqref{ext:chooseN} we have
    $3n/m>36(m_\subsc{EL}+2)^3$, and by~\eqref{eq:sqpb}, \eqref{ext:chooseetad},
    \eqref{ext:chooseeps}, and~\eqref{eq:ext:1120} we have
    $3(1-d)(2\delta'-m)n/m>\sqp(n,\delta)\ge\sqc(n,\delta)$ and $3(1-d)n/4\ge
    11n/20>\sqp(n,\delta)\ge\sqc(n,\delta)$. It follows that $G$ contains both
    $P^2_{\sqp(n,\delta)}$ and $C^2_\ell$ for each
    $\ell\in[3,\sqc(n,\delta)]\setminus\{5\}$ as required.
  \end{factproof}

  By~\eqref{eq:ext:Wedges}, if there are two vertex disjoint edges in $W$, then
  we are done by Fact~\ref{fac:2disjoint}. Thus we assume in the following that
  no such two edges exist.  This implies that there are two vertices in $W$
  which meet every edge in $W$. Since neither of these two vertices has more
  than $\xi n$ neighbours in $\bigcup I\subset W$, while $|I|>(\frac13-11\eta)m$
  by~\ref{ext:assm5} and because $\delta<2n/3$, there is a vertex in $W$
  adjacent to no vertex of $W$. We conclude that
  \begin{equation}
    \label{eq:ext-like:W}
    n-\delta-11\eta n\leq|\bigcup I|\leq
    |W|\leq n-\delta.
  \end{equation}

  Our next goal is to extract from each set $\bigcup B_i$ a large set $A_i$ of
  vertices which are adjacent to almost all vertices in~$W$ and are such that
  $G[A_i]$ has minimum degree somewhat above $|A_i|/2$.  Because at least
  $|W|\delta-2|W|$ edges leave $W$, the total number of non-edges between~$W$
  and $V(G)\setminus W$ is at most
  \[|W||V(G)\setminus W|-|W|(\delta-2)\le(n-\delta)(\delta+11\eta n-\delta+2)
  \leByRef{eq:ext-like:W} 11\eta n^2+2n\,.\] 
  In particular, by the definition of~$\xi$, by~\eqref{ext:chooseetad}
  and~\eqref{ext:chooseN}, 
  \begin{equation}\label{eq:ext:neighW}
    \Big| \big\{ v\in V(G)\setminus W \colon \deg(v,W)<|W|-\xi^2 n  \big\} \Big|
    \le \xi^2 n
    \,.
  \end{equation}
  % at most $\xi^2 n$ vertices outside $W$ have less than
  % $|W|-\xi^2 n$ neighbours in $W$. 
  In addition, by assumption~\ref{ext:assm6} we have $|B_i|\leq
  19m(2\delta-n)/(10n)$, which together with $\delta\le2n/3$,
  \eqref{ext:chooseetad}, \eqref{ext:chooseeps} and~\eqref{ext:smallxi} implies
  \begin{equation}\label{eq:ext:bigcupB}
    \Big|\bigcup B_i\Big|
    \le\frac{19}{10}(2\delta-n)
    \le\frac{19}{20}\delta
    <\delta-\xi n-(d+\eps)n
    \,.
  \end{equation}
  However, by assumption~\ref{ext:assm6} and the definition of an
  $(\eps,d)$-regular partition, vertices in $\bigcup B_i$ send at most
  $(d+\eps)n$ edges to $V(G)-\bigcup B_i - \bigcup I$. It follows from the
  definition of~$W$ that
  \begin{equation*}
    \bigcup B_i \cap W = \emptyset 
    \qquad
    \text{for all $i\in[k]$} \,.
  \end{equation*}
  Furthermore, \eqref{ext:chooseetad},
  \eqref{ext:chooseeps} and~\eqref{ext:smallxi} imply that
  $v\in\bigcup B_i$ has at least
  \begin{equation}
    \label{eq:ext-like:Bi}
    \delta-|W|-(d+\eps)n
    \geByRef{eq:ext-like:W} 2\delta-n-(d+\eps)n
    \gByRef{eq:ext:bigcupB} |\bigcup B_i|/2+32\xi^2 n
  \end{equation}
  neighbours in $\bigcup B_i$.  

  Now, for each $i\in[k]$ we let $A_i$ be the set of vertices in $\bigcup B_i$
  which are adjacent to at least $|W|-\xi^2 n$ vertices of $W$. In the rest of
  this paragraph we determine some important properties of the sets~$A_i$.
  By~\eqref{eq:ext:neighW} we have
  \begin{equation}\label{eq:ext:BminusA}
    \Big|\bigcup_{i\in[k]} \big( \bigcup B_i \big) \setminus A_i\big|\le\xi^2 n
    \qquad
    \text{for all $i\in[k]$}\,.
  \end{equation}
  But the vertices
  which are neither in~$W$ nor any of the sets~$A_i$ must be either in the
  exceptional set~$V_0$ or in $\bigcup B_i\setminus A_i$ for some $i$. 
  Hence we have
  \begin{equation}\label{eq:ext:V0Ai}
    \Big| V_0\cup \bigcup_{i\in[k]} \big( \bigcup B_i \big) \setminus A_i \Big|
    \le\varepsilon n+\xi^2 n<2\xi^2 n
    \,.
  \end{equation}
  Accordingly~\eqref{eq:ext-like:Bi} implies that 
  \begin{equation}\label{eq:ext:deltaAi}
    \delta(G[A_i])\geq |A_i|/2+30\xi^2 n
    \,,
  \end{equation}
  and since $|B_i|>\delta'-|I|\geq 2\delta'-m$ we have
  \begin{equation}
    \label{eq:ext-like:Ai}
    |A_i|\geq |\bigcup B_i|-2\xi^2 n\geq (1-\eps)\frac{n}{m}|B_i|-2\xi^2 n
    \ge 2\delta-n-3\xi^2 n
  \end{equation}
  for each $i\in[k]$, where we used the definition of~$\xi$,
  \eqref{ext:chooseetad}, \ref{ext:chooseeps}, and 
  \eqref{eq:ext:delta'} in the last inequality.

  \smallskip

  In the remainder of the proof we utilize the sets $A_i$ in order to find the
  desired squared path and squared cycles. We start by showing that we obtain
  squared cycles on $\ell$ vertices for each
  $\ell\in[3,\frac32|A_1|]\setminus\{5\}$. To see this note first that by
  Lemma~\ref{lem:nicepathcycle} (with $B=\emptyset$) we find in~$A_1$ a copy of
  $C_{2\ell'}$ for each $2\ell'\in \big[4,\min\{|A_1|,2\frac{n}{4}\}\big]$. By
  the definition of~$A_1$ every quadruple of consecutive vertices on such a
  cycle has at least $|W|-4\xi^2n$ common neighbours in~$W$, and by the
  definition of~$\xi$, \eqref{ext:chooseetad}, \eqref{ext:chooseeps}, and
  \eqref{eq:ext-like:W} we have $|W|-4\xi^2n\ge n/4$.  Hence we can apply
  Lemma~\ref{lem:squaringpath} to~$G$ and~$W$ to square this cycle. This gives
  us squared cycles of lengths $\ell$ with
  \begin{equation*}
    3\le\ell
    \le\min\Big\{\frac32|A_1|, 3\frac{n}{4}\Big\}
    \eqByRef{eq:ext:1120} \frac32|A_1|
  \end{equation*}
  such that~$\ell$ is divisible by three, but not of lengths not divisible by three.
 
  If we seek a squared cycle $C^2_{3\ell'+1}$ or $C^2_{3\ell'+2}$ (with
  $3\ell'+2\neq 5$) then we need to perform a process which we will call
  \emph{parity correction} and which we explain in the following two
  paragraphs. We shall use this parity correction process also in all remaining
  steps of the proof to obtain squared cycles of lengths not divisible by $3$.

  For obtaining a squared cycle of length $3\ell'+1$ we proceed as follows.  We
  pick (using Tur\'an's theorem) a triangle $abc$ in $A_1$ and \emph{clone} the
  vertex $b$, i.e., we insert a dummy vertex $b'$ into $G$ with the same
  adjacencies as $b$.  Then we apply Lemma~\ref{lem:nicepathcycle} to
  $A_1-\{b\}$ to find a path $P=(a,p_2,p_3,\ldots,p_{2\ell'-1},c)$ on $2\ell'$
  vertices whose end-vertices are $a$ and $c$. Finally we apply
  Lemma~\ref{lem:squaringpath} to the path $bPb'$, taking $Q_1=(b,a)$,
  $Q_2=(b,a,p_2,p_3)$ as the first quadruple and thereafter every other set of
  four consecutive vertices on $P$, finishing with
  $(p_{2\ell'-2},p_{2\ell'-1},c,b')$. This yields a squared path
  $(q_1,b,a,\ldots,c,b')$ on $3(\ell'+1)$ vertices, which gives a squared cycle
  $(b,a,\ldots,c)$ in~$G$ (without $q_1$ and the clone vertex~$b'$) on
  $3\ell'+1$ vertices as required.
 
  If we seek a squared cycle of length $3\ell'+2$ with $\ell'>1$ on the other
  hand, then we perform a similar process, except that we identify not one
  triangle in $A_1$ but two triangles $abc$, $xyz$ connected with an edge $cx$
  (which we obtain by the Erd\H{o}s-Stone theorem). We apply
  Lemma~\ref{lem:nicepathcycle} to find a path $P=(a,\ldots,z)$ in
  $A_1\setminus\{b,c,y,z\}$ on $2\ell'$ vertices. We then apply
  Lemma~\ref{lem:squaringpath} once to the path $bPy$ and once to $(b,c,x,y)$.
  Omitting the first vertex on each of the resulting squared paths and
  concatenating, we get a squared cycle $C^2_{3\ell'+2}$.

  % Now we attempt to find the longer squared cycles and the squared path we
  % need.

  Hence we do indeed obtain squared cycles $C^2_\ell$ for all
  $\ell\in[3,\frac32|A_1|]\setminus\{5\}$.  It remains to show that we can also
  find $C^2_\ell$ for all $\ell$ with $\frac32|A_1|\le\ell\le\sqc(n,\delta)$ and
  that we can find $P^2_{\sqp(n,\delta)}$.
  
  For this purpose, we first re-incorporate the vertices that are neither in $W$
  nor in any of the sets~$A_i$ by examining in which of the $A_i$ they have many
  neighbours. More precisely, for each $i\in[k]$, we let $X_i$ be~$A_i$ together
  with all vertices in $V(G)\setminus W$ which are adjacent to at least~$30\xi^2
  n$ vertices of $A_i$. Because every vertex in $V(G)\setminus W$ has at least
  $\delta-|W|$ neighbours outside $W$, by~\eqref{eq:ext-like:W} every vertex in
  $G-W$ is in $X_i$ for at least one $i$.  Moreover, by the definition of an
  $(\eps,d)$-regular partition, assumption~\ref{ext:assm6} and since $A_j\subset
  \bigcup B_j$, we have for all $j\in[k]$ with $j\neq i$ that
  \begin{equation}\label{eq:ext:AXempty}
    A_j\cap X_i = \emptyset
    \,.
  \end{equation}
  Hence it follows from~\eqref{eq:ext:V0Ai} that
  \begin{equation}\label{eq:ext:XA}
    |X_i|<|A_i|+2\xi^2 n 
    \qquad \text{and} \qquad
    |X_1-A_1|\le 2\xi^2 n
    \,.
  \end{equation}
  We finish the proof by distinguishing three cases.

  \smallskip

  {\sl Case 1:} $|X_i\cap X_j|\geq 2$ for some $i\neq j$.  Let $v_1$ and $v_2$
  be distinct vertices of $X_i\cap X_j$. Let $u_1$ and $u_2$ be distinct
  neighbours in $A_i$ of $v_1$ and $v_2$ respectively, and similarly $y_1$ and
  $y_2$ in $A_j$. Applying Lemma~\ref{lem:nicepathcycle} to $A_i$ we can find a
  path from $u_1$ to $u_2$ of length~$\ell'$ for any $4\le\ell'\le|A_i|-2$. We
  can find a similar path in $A_j$ from $y_1$ to $y_2$. Concatenating these
  paths with $v_1$ and $v_2$ we can find a $2\ell'$-vertex cycle $T_{2\ell'}$ in
  $X_1\cup X_2$ for any $10\leq 2\ell'\leq |A_i|+|A_j|-2$. There are no
  quadruples of consecutive vertices on $T_{2\ell'}$ using both $v_1$ and
  $v_2$. The four quadruples that use either~$v_1$ or~$v_2$ each have at least
  $(\xi-3\xi^2)n>100k$ common neighbours in $W$, where the inequality follows
  from~\eqref{ext:chooseN}, \eqref{ext:smallxi}, from
  \begin{equation}\label{eq:ext:knu}
    k\le\nu^{-1}\,,
  \end{equation}
  and from $\xi-3\xi^2>0$.  All other quadruples have at least $|W|-4\xi^2 n$
  common neighbours in $W$.  So applying Lemma~\ref{lem:squaringpath} we obtain
  a squared cycle on $3\ell'$ vertices. Again it is possible to perform parity
  corrections (prior to applying Lemma~\ref{lem:nicepathcycle}) so that in this
  case we have $C^2_\ell\subset G$ for every $\ell\in
  [3,\frac32(|A_i|+|A_j|-10)]\setminus\{5\}$.  By~\eqref{eq:ext-like:Ai}, we
  have $\sqc(n,\delta)\le\sqp(n,\delta)<\frac32(|A_i|+|A_j|-10)$.
  % Ref. zu {eq:ext-like:Ai}

  \smallskip

  {\sl Case 2:} for some $i$ every vertex of $A_i$ is adjacent to at least one
  vertex outside $X_i\cup W$. Since 
  \begin{equation*}
    |A_i|
    \geByRef{eq:ext:BminusA} \Big|\bigcup B_i \Big|-\xi^2n
    \geByRef{eq:BiBig} \frac43 \nu (1-\eps)n - \xi^2 n
    \geByRef{ext:smallxi} 13\xi n
    \gBy{\eqref{ext:smallxi},\eqref{eq:ext:knu}}
    31k\xi^2 n
  \end{equation*}
  we can certainly find $j\neq i$ such that there are $31\xi^2 n$ vertices in
  $A_i$ all adjacent to vertices of $X_j\setminus X_i$. Since no vertex of
  $X_j\setminus X_i$ is adjacent to $30\xi^2 n$ vertices of $A_i$ (by definition
  of $X_i$), we find two disjoint edges~$u_1v_1$ and~$u_2v_2$ from $u_1,u_2\in
  A_i$ to $v_1,v_2\in X_j$. Choosing distinct neighbours~$y_1$ of $v_1$ and $y_2$ of
  $v_2$ in $A_j$ and applying the same reasoning as in the previous case we are
  done.

  \smallskip
 
  {\sl Case 3:} for each $i\neq j$ we have $|X_i\cap X_j|\leq 1$, and for each
  $i$ some vertex in $A_i$ is adjacent only to vertices in $W\cup X_i$. Thus
  $|X_i|\geq \delta-|W|+1$ for each $i$. We now first focus on finding a squared
  path on $\sqp(n,\delta)$ vertices in $G$, and then turn to the squared cycles
  which will complete the proof.  If for some $i\neq j$ we have $|X_i\cap
  X_j|=1$ then we obtain a squared path of the desired length as in Case~1.
  There we required two vertices in $X_i\cap X_j$ to obtain a squared cycle
  (which must return to its start), but one vertex suffices for a squared path
  to cross from $X_i$ to $X_j$.

  So, assume
  that the sets $X_i$ are all disjoint.  It follows that
  $k\leq(n-|W|)/(\delta-|W|+1)$.  Since $|W|\leq n-\delta$
  by~\eqref{eq:ext-like:W}, this implies
  \[
  k\le\frac{n-(n-\delta)}{\delta-(n-\delta)+1}
  =\frac{\delta}{2\delta-n+1}\,.
  \]
  Now if $k\geq\rp(n,\delta)+1$ then we would have
  \[ \rp(n,\delta)+1\leq k\leq \frac{\delta}{2\delta-n+1}\,,\]
  and so
  \[ \rp(n,\delta)\leq \frac{n-\delta-1}{2\delta-n+1}\,,\] but by~\eqref{eq:r}
  we have $\rp(n,\delta)\ge\frac{n-\delta}{2\delta-n+1}$, so
  \begin{equation*}
    k\leq\rp(n,\delta) \,. 
  \end{equation*}
  Thus the largest of the sets $X_i$, say $X_1$, has at
  least
  \begin{equation}\label{eq:ext:X1}
    |X_1|\ge \frac{n-|W|}{k} 
    \geByRef{eq:ext-like:W} \frac{\delta}{k}
    \ge\frac{\delta}{\rp(n,\delta)}
  \end{equation}
  vertices. 

  We now want to apply Lemma~\ref{lem:nicepathcycle} with $H=G[X_1]$ and `bad'
  vertices $B=X_1-A_1$. Note that by~\eqref{eq:ext:XA} there are at most $2\xi^2
  n$ vertices in $B=X_1-A_1$, and so we have
  \begin{equation*}
    |B|\leByRef{eq:ext:XA} 2\xi^2 n
    \leByRef{ext:smallxi} \frac{\nu\delta}{100}
    \leByRef{eq:ext:knu} \frac{\delta}{100k}\le\frac{|H|}{100} \,.
  \end{equation*}
  Moreover, $\delta(H)=\delta(G[X_1])\geq 30\xi^2 n$ by definition of~$X_1$, and therefore
  every vertex in~$B$ has at least $30\xi^2 n\ge 9\cdot 2 \xi^2 n\ge 9|B|$
  neighbours in~$H$.
  In addition, vertices~$v$ in $H-B\subset A_1$ satisfy
  \begin{equation*}\begin{split}
    \deg(v,X_1)
    & \geByRef{eq:ext:deltaAi} \frac{|A_1|}{2}+30\xi^2 n
    \gByRef{eq:ext:XA} \frac{|X_1|}{2}+25\xi^2 n \\
    & = \frac{|H|}{2}+25\xi^2 n
    \geByRef{ext:chooseN} \frac{|H|}{2}+9|B|+10 \,.
  \end{split}\end{equation*}
  Hence we can indeed apply Lemma~\ref{lem:nicepathcycle}, to obtain a path $T$
  covering $\min\{X_1,n/2\}$ vertices on which every quadruple of consecutive
  vertices contains at most one `bad' vertex. Finally we want to apply
  Lemma~\ref{lem:squaringpath} to the graph~$G[X_1\cup W]$ and the
  cycle~$T$ with the following ordering~$\sigma$ of the quadruples of
  consecutive vertices in~$T$: $\sigma$ is such that
  all quadruples containing vertices from $B$ come first, followed (by an
  arbitrary ordering of) all other quadruples. There are at most $2\cdot 2\xi^2
  n$ quadruples containing vertices from~$B=X_1-A_1$, and by the definition
  of~$A_1$ and of~$W$, each of them has at least
  $(\xi-3\xi^2)n\ge\xi^2 n$ common neighbours in~$W$. All remaining quadruples
  have, by the definition of~$A_1$, by~\eqref{eq:ext-like:W} and since
  $\delta\le\ 2n/3$, at least $|W|-4\xi^2
  n\ge\tfrac{n}{4}\ge\frac12\min\{|X_1|,\frac{n}{2}\}$ common neighbours
  in~$W$. Hence, we can indeed apply Lemma~\ref{lem:squaringpath} to obtain a
  squared path on at least $\frac32\min\{|X_1|,n/2\}\ge\sqp(n,\delta)$
  vertices, where the inequality follows from the definition of
  $\sqp(n,\delta)$, from~\eqref{eq:ext:1120}, and from~\eqref{eq:ext:X1}.
 
  At last, we show that we can find in $G$ the desired long squared cycles in
  Case~3.
  Assume first that there is a cycle of sets (relabelling the indices if
  necessary) $X_1,X_2,\ldots,X_s$ for some $3\leq s\leq k$ such that $X_i\cap
  X_{i+1\!\mod s}=\{v_i\}$ for each $i$, and the $v_i$ are all distinct, then for
  each $i$ we may choose neighbours $u_i\in A_i$ and $y_i$ in $A_{i+1\!\mod s}$ of
  $v_i$, and we may insist that all these $3s$ vertices are distinct. Similarly
  as before we can apply Lemma~\ref{lem:nicepathcycle} to each $G[A_i]$ in turn
  and concatenate the resulting paths, in order to find a cycle $T_{2\ell'}$ for
  every $8s\leq 2\ell'\leq |A_1|+|A_2|$ on which there are no quadruples using
  more than one vertex outside $\bigcup_i A_i$. Again (checking the conditions
  similarly as before) we may apply Lemma~\ref{lem:squaringpath} to $T_{2\ell'}$
  to obtain a squared cycle on $3\ell$ vertices.  Finally by performing parity
  corrections we obtain $C^2_\ell$ for every
  $\ell\in[3,\frac32(|A_1|+|A_2|)]\setminus\{5\}$.

  If there exists no such cycle of sets, then $\sum_{i=1}^k |X_i|\leq
  n-|W|+k-1$. Since we have also $|X_i|\geq \delta-|W|+1$ for each $i$ and
  $|W|\leq n-\delta$, it follows from the definition of $\rc(n,\delta)$ (by
  establishing a relation similar to~\eqref{eq:r}) that $k\leq \rc(n,\delta)$,
  and by averaging, that the largest of the sets~$X_i$, say $X_1$, contains at
  least $2\sqc(n,\delta)/3$ vertices. As before, we can apply
  Lemma~\ref{lem:nicepathcycle} to $X_1$ to obtain a cycle $T_{2\ell'}$ for each
  $4\leq 2\ell'\leq |X_1|$ on which the `bad' vertices from~$B=X_1-A_1$ are
  separated, and apply Lemma~\ref{lem:squaringpath} to it to obtain a squared
  cycle $C^2_{3\ell'}$ for each $6\leq 3\ell'\leq\sqc(n,\delta)$ as
  required. Again the parity correction procedure is applicable, so we get
  $C^2_\ell$ for every $\ell\in[3,\sqc(n,\delta)]\setminus\{5\}$.
\end{proof} 
  
\section{Concluding remarks} 
 
\paragraph{\bf The proof of Theorem~\ref{thm:main}.} 

Our results were most difficult to prove for $\delta\approx 4n/7$. This is
somewhat surprising given the experience from the partial and perfect
packing results of Koml\'os~\cite{Kom} and K\"uhn and
Osthus~\cite{KuhOst}. In the setting of these results it becomes steadily
more difficult to prove packing results as the minimum degree of the graph
(and hence the required size of a packing) increases, with perfect packings
as the most difficult case. Yet in our setting it is relatively easy to
prove our results when the minimum degree condition is large. This
difference occurs because we have to embed triangle-connected graphs; as
the minimum degree increases the possibilities for bad behaviour when
forming triangle-connections are reduced. 

This is paralleled by the behaviour of $K_4$-free graphs: For any
minimum degree $\delta(G)>2v(G)/3$ the graph $G$ is not $K_4$-free.
However, if $\delta(G)>5v(G)/8$ then by the Andr\'asfai-Erd\H os-S\'os
theorem~\cite{AES} the $K_4$-free graph $G$ is forced to be tripartite, while
for smaller values of~$\delta(G)$ there exist more possibilities.

\smallskip

\paragraph{\bf Extremal graphs.}

It is straightforward to check (from our proofs) that up to some trivial
modifications the graphs $G_p(n,\delta)$ and $G_c(n,\delta)$ are the only extremal
graphs. 
We believe that the graph $G_p(n,\delta)$ remains extremal for squared
paths even when $\delta$ is not bounded away from $n/2$, although as noted
in Section~\ref{sec:Intro} the same is not true for~$G_c(n,\delta)$ and
squared cycles.

However it is not the case that the only extremal graph excluding some
$C^2_\ell$ of chromatic number four is $K_{n-\delta,n-\delta,2\delta-n}$
(cf.\ \ref{thm:main:ii} of our main theorem, Theorem~\ref{thm:main}). Let
us briefly explain this.  Suppose $\ell$ is not divisible by three. Since
$C^2_\ell$ has no independent set on more than~$\lfloor\ell/3\rfloor$
vertices, whenever we remove an independent set from $C^2_\ell$ we must
leave some three consecutive vertices, which form a triangle. Now suppose
that we can find a graph $H$ on $\delta$ vertices with minimum degree
$2\delta-n$ which is both triangle-free and contains no even cycle on more
than $2(2\delta-n)$ vertices. Then the graph $G$ obtained by adding an
independent set of size $n-\delta$ to $H$, all of whose vertices are
adjacent to all of $H$, contains no squared cycle of length indivisible by
three and no squared cycle with more than $3(2\delta-n)$ vertices.

To mention one possible $H$, take $\delta=\frac{6n}{11}$ and let $H$ be
obtained as follows. We take the disjoint union of three copies of
$K_{n/11,n/11}$ and fix a bipartition. Now we add three vertex disjoint
edges within one of the two partition classes, one between each copy of
$K_{n/11,n/11}$. The resulting triangle-free graph has no even cycle
leaving any copy of $K_{n/11,n/11}$.  Hence all even cycles have at most
$\frac{2n}{11}$ vertices. However, it has odd cycles of up to
$\frac{6n}{11}-3$ vertices.

\smallskip

\paragraph{\bf Long squared cycles.} 

Theorem~\ref{pcycle}~\ref{pcycle:missing} states that if any of various odd
cycles are excluded from $G$ we are guaranteed even cycles of every length
up to $2\delta(G)$, whereas the equivalent statement in our
Theorem~\ref{thm:main} contains an error term. We believe this error term
can be removed, but at the cost of significantly more technical work,
requiring both a new version of the stability lemma and new extremal
results.

\smallskip
 
\paragraph{\bf Higher powers of paths and cycles.}

We note that Theorem~\ref{thm:KSSPosa} has a natural generalisation to
higher powers of cycles, the so called P\'osa-Seymour Conjecture. This
conjecture was proved for all sufficiently large $n$ by Koml\'os,
S\'ark\"ozy and Szemer\'edi~\cite{KSSgen}. We conjecture a natural
generalisation of Theorem~\ref{thm:main} for higher powers of paths and
cycles.
 
Given $k$, $n$ and $\delta$, we construct an $n$-vertex graph
$G_p^{(k)}(n,\delta)$ by partitioning the vertices into an `interior' set
of $\ell=(k-1)(n-\delta)$ vertices upon which we place a complete balanced
$k-1$-partite graph, and an `exterior' set of $n-\ell$ vertices upon which
we place a disjoint union of $\lfloor(n-\ell)/(\delta-\ell+1)\rfloor$
almost-equal cliques. We then join every `interior' vertex to every
`exterior' vertex. We construct $G_c^{(k)}(n,\delta)$ similarly, permitting
the cliques in the `exterior' vertices to overlap in cut-vertices of the
`exterior' set if this reduces the size of the largest clique while
preserving the minimum degree $\delta$.
 
\begin{conjecture} 
  Given $\nu>0$ and $k$ there exists $n_0$ such that whenever $n\geq n_0$
  and $G$ is an $n$-vertex graph with $\delta(G)=\delta>\frac{k-1}{k}n+\nu
  n$, the following hold.
  \begin{enumerate}[label=\irom]
  \item If $P^k_\ell\subset G_p^{(k)}(n,\delta)$ then $P^k_\ell\subset G$. 
  \item If $C^k_{(k+1)\ell}\subset G_c^{(k)}(n,\delta)$ for some integer
     $\ell$, then $C^k_{(k+1)\ell}\subset G$.
   \item If $C^k_\ell\subset G_c^{(k)}(n,\delta)$ with $\chi(C^k_{\ell})=k+2$
     and $C^k_\ell\not\subset G$ for some integer $\ell$, then
     $C^k_{(k+1)\ell}\subset G$ for each integer $\ell<k\delta-(k-1)n-\nu n$.
  \end{enumerate}
\end{conjecture}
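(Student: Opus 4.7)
The plan is to imitate the three-pronged attack used for Theorem~\ref{thm:main}: Regularity Method, Stability Method, and direct analysis of near-extremal graphs. First I would generalise the notion of triangle-connectedness. Call two edges $e, e'$ of $G$ \emph{$K_{k+1}$-connected} if there is a walk of copies of $K_{k+1}$ each consecutive pair of which shares an edge, starting with~$e$ and ending with~$e'$; a \emph{$K_{k+1}$-component} is a maximal equivalence class. Let $\mathrm{CTF}_k(G)$ denote the largest size of a collection of vertex-disjoint $K_{k+1}$'s all of whose edges lie in a single $K_{k+1}$-component. The Blow-up Lemma machinery of~\cite{KSS_bl,KSS_pos,KSSgen} then yields a direct generalisation of the Embedding Lemma (Lemma~\ref{lem:bl}): if the $(\eps,d)$-reduced graph~$R$ contains a connected $K_{k+1}$-factor of size $t$, one recovers $P^k_\ell$ and $C^k_\ell$ (subject to the usual chromatic obstruction) of length up to $(1-d)tn/|V(R)|$. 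Applying Lemma~\ref{lem:reg} to the host graph~$G$ reduces the problem to proving a stability result about $\mathrm{CTF}_k$ in a graph with the inherited minimum degree condition $\delta(R)\ge\bigl(\tfrac{k-1}{k}+\nu/2\bigr)|V(R)|$.

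The heart of the proof would be a stability lemma saying: either $\mathrm{CTF}_k(R)$ exceeds the value forced by $G_p^{(k)}(n,\delta)$ and $G_c^{(k)}(n,\delta)$ (with a modest slack), or $R$ already has the structure of $G_p^{(k)}$ or $G_c^{(k)}$ up to editing a sublinear number of vertices. To set this up, I would first establish generalisations of Lemma~\ref{lem:cpts}: each $K_{k+1}$-component has more than $\delta$ vertices; exteriors of distinct components have no edges between them; and the common neighbourhood of any edge of a component induces a graph with minimum degree at least $2\delta-n$ whose $k$-th power contains large $K_k$-free subgraphs (via Tur\'an-type bounds), so one can run a Hajnal--Szemer\'edi-type factorisation inside a component to build $K_{k+1}$'s. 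The matching-extended-to-triangles trick used throughout Section~\ref{sec:comp} must be replaced by a Hall-type argument pairing edges of a matching in one component with $(k-1)$-cliques inside the interior (which in the extremal case is a complete $(k-1)$-partite graph, no longer merely independent).

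The main obstacle is precisely this stability analysis. For $k=2$ the interior of $G_p(n,\delta)$ is an independent set, which made the case distinctions of Facts~\ref{fac:StabLem:around35}--\ref{fac:StabLem:case3} tractable: one could bound $|\intr(G)|\le n-\delta$ and exploit that non-neighbours of an edge in $\intr(G)$ force a large non-neighbourhood at any third vertex. For general~$k$ the interior may carry a dense $(k-1)$-partite graph, so the analogous arguments need an $(k-1)$-partite Tur\'an-type input that counts not vertices but $K_{k-1}$'s in the interior. Moreover, the phase transitions of $\sqp(n,\delta)$ at $\tfrac{3n}{5}$, $\tfrac{4n}{7}$, \dots will be replaced by a family of thresholds $\tfrac{(k-1)(r+1)n-r(k-1)}{(k-1)(r+1)+1}$ and the delicate case-by-case arguments of Facts~\ref{fac:StabLem:int:special}--\ref{fac:StabLem:case3} must be carried out separately near each of the first few jumps; from there a uniform greedy argument analogous to Fact~\ref{fac:StabLem:greedy} should handle all larger~$r$.

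For the near-extremal regime I would prove a generalisation of Lemma~\ref{lem:ext-like}. The same strategy works: isolate the ``interior'' vertices $W$ as those with few neighbours in the components; show via an analogue of Fact~\ref{fac:2disjoint} that two disjoint edges in $W$ connect distinct blocks $B_i, B_j$ into a common $K_{k+1}$-component, whence the Embedding Lemma supplies squared---here $k$-th powered---paths and cycles of length well beyond $\sqp^{(k)}(n,\delta)$; otherwise $W$ is essentially a $(k-1)$-partite graph of the expected proportions, and one finishes by combining a generalised Lemma~\ref{lem:nicepathcycle} (a Dirac-type path/cycle lemma in each $A_i$ avoiding ``bad'' vertices) with Lemma~\ref{lem:squaringpath} generalised to attach $k-1$ auxiliary vertices between each pair of consecutive ``skeleton'' vertices rather than a single one. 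The chromatic refinement in part~(iii) of the conjecture is the exact analogue of Theorem~\ref{thm:main}\ref{thm:main:ii}: if $G$ misses even one $C^k_\ell$ of chromatic number $k+2$ then $G$ is not close to the extremal construction, so the stability lemma already places us in the regime where $\mathrm{CTF}_k(R)$ is substantially larger and the desired longer multiples of $(k+1)$ come out of the Embedding Lemma.
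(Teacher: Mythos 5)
This statement is presented in the paper as a \emph{conjecture}, not a theorem: the authors explicitly write ``We conjecture a natural generalisation of Theorem~\ref{thm:main}'' and offer no proof. There is therefore no proof in the paper to compare yours against. Your proposal is itself a high-level research plan rather than a proof --- you candidly flag that ``the main obstacle is precisely this stability analysis'' and that the case-by-case work near each phase transition ``must be carried out separately.'' Those are exactly the technical bottlenecks that, one imagines, led the authors to leave this as an open problem rather than a theorem.

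That said, your sketch is the natural one and tracks the architecture of the paper's proof of Theorem~\ref{thm:main} faithfully: replace triangle-connectedness by $K_{k+1}$-connectedness, push the Regularity and Blow-up machinery through (this part genuinely is routine given~\cite{KSSgen}), and then face the stability lemma and the extremal lemma. The substantive gaps you would have to close are (a) the stability analysis when the interior is no longer an independent set but a dense complete $(k-1)$-partite graph --- the paper's Facts~\ref{fac:StabLem:around35}--\ref{fac:StabLem:case3} lean heavily on $|\intr(G)|\le n-\delta$ and on independence of $\intr(G)$, and neither survives for $k\ge 3$ without a genuinely new counting argument on $K_{k-1}$'s inside the interior; (b) the Hall-type pairing of matching edges with $(k-1)$-cliques in the interior, which for $k=2$ reduces to bipartite Hall but for $k\ge 3$ is a hypergraph matching problem where defect versions are much weaker; and (c) the proliferation of exceptional thresholds analogous to $\delta\approx 3n/5$ and $\delta\approx 4n/7$, each requiring bespoke casework. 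Until those are resolved, this remains a plan, not a proof, and cannot be credited as settling the conjecture.
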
 
 
It seems likely that again the $\nu n$ error term in the last statement is
not required, but again (at least for powers of cycles) it is required in
the minimum degree condition.

\section*{Acknowledgement}

This project was started at DocCourse 2008, organised by the 
research training group Methods for Discrete Structures, 
Berlin. In particular, we would like to thank Mihyun Kang and 
Mathias Schacht for organising this nice event.

%%%% BIB %%%%%%%%%%%%%%%%%%%%%%%%%%%%%%%%%%%%%%%%%%%%%%%%%%%%%%%%%%%%%% 
 
\bibliographystyle{amsplain} 
\bibliography{TuranPosa} 

%%%% APPENDIX %%%%%%%%%%%%%%%%%%%%%%%%%%%%%%%%%%%%%%%%%%%%%%%%%%%%%%%%% 
 
\appendix 

\section{Proof of Lemma~\ref{lem:bl}} 
 
For the proof of Lemma~\ref{lem:bl} we apply the following version (which
is a special case) of the Blow-up Lemma of Koml\'os, S\'ark\"ozy and
Szemer\'edi~\cite{KSS_bl}.
 
\begin{lemma}[Blow-up Lemma~\cite{KSS_bl}]
\label{lem:KSSbl} 
  Given fixed $c,d>0$, there exist $\eps_0>0$ and $n_\subsc{BL}$ such that
  for any $0<\eps<\eps_0$ the following holds. Let~$H$ be any graph on at
  least $n_\subsc{BL}$ vertices with $V(H)=V_1\dcup V_2\dcup V_3$ and
  $|V_i|\ge\frac16|V(H)|$, in which each bipartite graph $H[V_i,V_j]$ is
  $(3\eps,d)$-regular and furthermore $\delta_{V_i}(V_j)\geq \frac12d|V_i|$
  for each $1\leq i,j\leq 3$.

  Let $F$ be any subgraph of the complete tripartite graph with parts
  $V_1$,~$V_2$ and $V_3$ such that the maximum degree of $F$ is at most
  four.  Assume further, that at most four vertices $x_i$ ($i\in[4]$) of $F$ are
  endowed with sets $C_{x_i}\subset V_j$ such that $x_i\in V_j$ and
  $|C_{x_i}|\ge c|V_j|$

  Then there is an embedding $\psi\colon V(F)\rightarrow V(H)$ of $F$ into $H$
  with $\psi(x_i)\in C_{x_i}$ for $i\in[4]$.
 \end{lemma}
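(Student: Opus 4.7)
The plan is to embed the desired squared path or cycle by decomposing it into long segments, one per triangle of a connected triangle factor of $R'$, each segment embedded via the Blow-up Lemma (Lemma~\ref{lem:KSSbl}), glued together by short \emph{bridges} built greedily along triangle walks in $R'$. For parts~(i) and~(ii) I fix a maximum connected triangle factor $T$ of $R'$; for part~(iii) the triangle factor $T$ is given. I order the triangles of $T$ as $t_1,\ldots,t_p$, and for each consecutive pair (including $t_p,t_1$ in the cyclic cases of~(i) and~(ii), and the pairs connecting $u_1v_1$ to $t_1$ via $X_1Y_1$ and $t_p$ to $u_2v_2$ via $X_2Y_2$ in part~(iii)) I fix a triangle walk of length $O(m^2)$ in the common triangle component of $R'$ joining a designated edge of $t_j$ to a designated edge of $t_{j+1}$.

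Along each such walk I construct a bridge: a squared path on at most $O(m^2)$ vertices whose first two vertices form a prescribed edge inside the clusters of $t_j$ and whose last two vertices form a prescribed edge inside the clusters of $t_{j+1}$, with all interior vertices lying in clusters traversed by the walk. The bridge is built greedily vertex by vertex; at each stage the current last two vertices form an edge inside a regular pair $(V_i,V_j)$ which, by one step of the triangle walk, lies on a common triangle of $R'$ with the next cluster $V_k$, and $(\eps,d)$-regularity of these pairs guarantees $\Omega(dn/m)$ unused common neighbours for the current edge in $V_k$. Summing over all walks, the total number of vertices used by bridges is at most $6(m+2)^3$, exactly the lower cut-off in~(iii) and $o(n)$ for large $n$.

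Inside each triangle $t_j=\{A_j,B_j,C_j\}$, after removing the at most $\eps n/m$ atypical vertices from each cluster (those violating the $(\eps,d)$-regularity) and the vertices already used by the two adjacent bridges, I trim the remainder to three sub-clusters $A_j',B_j',C_j'$ of a common size $l_j$ and apply the Blow-up Lemma with $F=P^2_{3l_j}$ inside $V(H)=A_j'\cup B_j'\cup C_j'$. The graph $F$ has maximum degree~$4$ and its tripartite vertex partition cycles through the three clusters, so $F$ fits the hypothesis of Lemma~\ref{lem:KSSbl}; the trimmed triple $(A_j',B_j',C_j')$ is $(3\eps,d/2)$-superregular by a standard argument. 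I force the first two and last two vertices of the embedded $P^2_{3l_j}$ into the common neighbourhoods of the adjacent bridge end-edges by invoking the anchor feature, and $(\eps,d)$-regularity ensures that each of the four anchor target sets has size $\Omega(dn/m)$, well above the $c|V_i|$ threshold of Lemma~\ref{lem:KSSbl}. Concatenating the bridges with the embedded squared paths gives a squared path or cycle of length $L=3\sum_j l_j+O(m^3)$; by freely varying how many triangles of $T$ we include and tuning each $l_j$ within a window of size $\Omega(n/m)$, we realise every $L$ divisible by~$3$ in the stated range. This yields part~(i), and in part~(iii) the hypothesis that $u_iv_i$ has at least $2dn/m$ common neighbours in both $X_i$ and $Y_i$ supplies the anchor sets at the two global endpoints.

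For part~(ii), the $K_4$ contained in each triangle component enables \emph{parity correction}: inside a $K_4\subset R'$ I locally modify the squared path by replacing a three-vertex segment by a four-vertex segment that visits the extra cluster of the $K_4$, thereby changing $L$ by $+1$; iterating or using the reverse operation lets me realise every $L\not\equiv 0\pmod 3$ in the stated range. The exclusion $\ell=5$ is forced because $C^2_5=K_5$ has chromatic number~$5$ and cannot be embedded into any tripartite or four-partite block. Short target lengths $\ell$ for which bridges would exceed the budget are handled separately by applying the Blow-up Lemma directly to a single $K_4\subset R'$, which blows up to a dense $K_4$-blowup on four clusters of size $n/m$ and readily contains $C^2_\ell$ for all $\ell\in[3,3n/m]\setminus\{5\}$. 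The main obstacle is coordinating the bridge end-edges so that their common neighbourhoods in the three clusters of the next triangle remain large enough to supply anchors for the Blow-up Lemma; this is achieved by choosing each terminal bridge edge inside a $(3\eps,d/2)$-superregular triple, which I arrange by pre-deleting all $\eps$-atypical vertices from each cluster before starting both the bridge construction and the Blow-up Lemma application.
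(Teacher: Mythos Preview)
Your proposal does not address the stated lemma. Lemma~\ref{lem:KSSbl} is the Blow-up Lemma of Koml\'os, S\'ark\"ozy and Szemer\'edi, which the paper quotes as a black-box special case of the result in~\cite{KSS_bl} and does not prove. What you have written is instead a proof sketch of the \emph{Embedding Lemma} (Lemma~\ref{lem:bl}), the three-part statement that \emph{uses} Lemma~\ref{lem:KSSbl} as a tool: you speak of parts~(i), (ii), (iii), of a connected triangle factor in $R'$, of anchored endpoints $u_1v_1$, $u_2v_2$---none of which appear in the hypotheses or conclusion of Lemma~\ref{lem:KSSbl}. A proof of Lemma~\ref{lem:KSSbl} itself would have to produce an embedding of an arbitrary bounded-degree tripartite $F$ into a single super-regular triple, which is an entirely different (and much harder) argument.

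If one reinterprets your write-up as a proof of Lemma~\ref{lem:bl}, then it follows essentially the same strategy as the paper's appendix proof: both build short ``bridge'' squared paths greedily along triangle walks connecting consecutive triangles of the connected triangle factor (the paper formalises this via Fact~\ref{fac:pathlink} and the sequences $Q(W,\overrightharp{UV})$), then invoke Lemma~\ref{lem:KSSbl} with four image-restricted vertices inside each triangle to fill it with a long segment, and both handle lengths not divisible by three via a local detour through a $K_4$ in the reduced graph. The paper is more explicit about the one genuinely delicate point---controlling the total length modulo three, for which it records the parity relation~\eqref{eq:0mod3}---whereas your sketch asserts one can ``freely vary'' the $l_j$ and then correct parity; that is correct in outline but glosses over the bookkeeping.
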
 
 
We also say that the~$x_i$ in Lemma~\ref{lem:KSSbl} are \emph{image restricted}
to~$C_{x_i}$.
 
\begin{proof}[Proof of Lemma~\ref{lem:bl}] Given $d$, we let $c=d^2/4$. Now
  Lemma~\ref{lem:KSSbl} gives values $\eps_0>0$ and $n_\subsc{BL}$. We choose
  $\eps_\subsc{EL}=\min(\eps_0,d^2/24)$. Given $\eps<\eps_\subsc{EL}$ and
  $m_\subsc{EL}$ we choose
  \[n_\subsc{EL}=\max\left(2m_\subsc{EL}n_\subsc{BL},\frac{6m^4}{\eps}\right)\,.\]
  Let $n\ge n_\subsc{EL}$, let $G$ be an $n$-vertex graph, and let $R'$ be an
  $(\varepsilon,d)$-reduced graph of $G$ on $m\le m_\subsc{EL}$ vertices.
 
  Fix a set $\mathcal{T'}=\{T'_1,\ldots,T'_{\CTF(R')/3}\}$ of vertex-disjoint
  triangles in a triangle component of $R'$ covering $\CTF(R')$ vertices. For
  each triangle $T'_i=X'_{i,1}X'_{i,2}X'_{i,3}$ we may by regularity for each
  $j\in[3]$ remove at most $\eps|X'_{i,j}|$ vertices from $X'_{i,j}$ to obtain a
  set $X_{i,j}$ such that each pair $(X_{i,j},X_{i,k})$ is not only
  $(2\eps,d)$-regular but also satisfies $\delta_{X_{i,k}}(X_{i,j})\geq
  (d-3\eps)|X_{i,k}|$. We let $R$ be the $(2\eps,d)$-reduced graph corresponding
  to the new vertex partition given by replacing each $X'_{i,j}$ with $X_{i,j}$;
  then every edge of $R'$ carries over to $R$, and we let $\mathcal{T}$ be the
  set of $\CTF(R')/3$ vertex disjoint triangles in $R$ corresponding
  to~$\mathcal{T'}$. We set $r=\CTF(R')/3$.

  Our strategy now is as follows. We shall first fix a collection of
  suitable triangle walks $W_1,\dots, W_{r-1}$ and~$W'$ in~$R$.
  Next, for each of these triangle walks $W=(E_1,E_2,\ldots)$ we do the
  following. Let $\overrightharp{U_1V_1}$ be (a suitable) orientation of
  the first edge $E_1$ of~$W$. We shall construct a
  sequence~$Q(W,\overrightharp{U_1V_1})$ of vertices of~$R$ whose first two
  vertices are~$U_1$ and~$V_1$, in that order, and which has the property
  that every vertex in the sequence is adjacent to the two preceding
  vertices (as is the case for a squared path). Then we use this
  sequence~$Q(W,\overrightharp{U_1V_1})$ to obtain a squared path in~$G$
  following $W$, whose first two vertices are in $U_1$ and
  $V_1$. Finally, connecting suitable paths appropriately will lead to a
  proof of \ref{lem:bl:1}, \ref{lem:bl:2}, and \ref{lem:bl:3}.

  We first construct the triangle walks $W_1,\dots, W_{r-1}$
  and~$W'$.  For each $1\leq i\leq r-1$ let $W_i$ be a fixed
  triangle walk in $R$ whose first edge is in $T_i$ and whose last is in
  $T_{i+1}$. We suppose (repeating edges in the triangle walk $W_i$ if
  necessary) that each triangle walk~$W_i$ contains at least ten edges,
  that the first edge of $W_{i+1}$ is not the same as the last edge of
  $W_i$, and such that each walk with more than ten edges is of minimal
  length. We have $|W_i|\leq\binom{m}{2}$ for each $i$. Let $W'$ be the
  triangle walk obtained by concatenating $W_1,\ldots,W_{r-1}$. 

  Next, we describe how to construct the sequence $Q(W,\overrightharp{A_1B_1})$
  for any triangle walk $W=(E_1,E_2,\ldots)$ in $R$ and any orientation
  $\overrightharp{A_1B_1}$ of its first edge $E_1$.  We construct
  $Q(W,\overrightharp{A_1B_1})$ iteratively as follows. Let $Q_1=(A_1,B_1)$. Now
  for each $2\leq i\leq |W|$ successively, we define $Q_i$ as follows. The last
  two vertices $A_{i-1},B_{i-1}$ of $Q_{i-1}$ are an orientation of~$E_{i-1}$.
  If $E_i=A_{i-1}B_i$ we create~$Q_i$ by appending $(B_i,A_{i-1})$ to~$Q_{i-1}$;
  if $E_i=B_{i-1}B_i$ we append $(B_i,A_{i-1},B_{i-1},B_i)$ to~$Q_{i-1}$ to
  create~$Q_i$. At each step the final two vertices of $Q_i$ are an orientation
  of~$E_i$. Furthermore every vertex of $Q_i$ is adjacent in $R$ to the two
  vertices preceding it in $Q_i$. Finally, we let
  $Q(W,\overrightharp{A_1B_1})=Q_{|W|}$.

  We shall need the following observations concerning the lengths of sequences
  constructed in this way. It is easy to check by induction that for any
  triangle walk $W$ with at least two edges whose first edge is~$U_1V_1$, we
  have
  \begin{equation}\label{eq:1mod3}
    |Q(W,\overrightharp{U_1V_1})|+|Q(W,\overrightharp{V_1U_1})|\equiv 1 \mod 3\,.
  \end{equation}
  Now consider the concatenation~$W'$ of the walks~$W_i$.  Let~$U_1V_1$ be
  the first edge of~$W_1$. If we construct $Q(W', \overrightharp{U_1V_1})$
  then the first edge~$U_iV_i$ and the last edge $U'_iV'_i$ of each~$W_i$
  obtains an orientation, say $\overrightharp{U_iV_i}$
  and~$\overrightharp{U'_iV'_i}$. Clearly, there are sequences~$\tilde Q_i$ of
  vertices in~$T_i$ for $1<i<r$, such that $Q(W',\overrightharp{V_1U_1})$ is the concatenation of
  \[
  Q(W_1,\overrightharp{V_1U_1}),\tilde
  Q_2,Q(W_2,\overrightharp{V_2U_2}),\dots,\tilde Q_{r-1},
  Q(W_{r-1},\overrightharp{V_{r-1} U_{r-1}})\,.\]
  Further we let~$\tilde Q_1=T_1-U_1V_1$ and $\tilde Q_r=T_r-U'_{r-1}V'_{r-1}$.
  We define $f_i=|\tilde Q_i|\!\mod 3$ for~$i\in[r]$.
  Together with~\eqref{eq:1mod3} we obtain
  \begin{equation*}
    |Q(W',\overrightharp{U_1V_1})|+ |Q(W_1,\overrightharp{V_1U_1})|+ 
    \sum_{1<i<r} \big( |Q(W_i,\overrightharp{V_iU_i})|+ f_i \big)
    \equiv 1 \mod 3 
  \end{equation*}
  and hence
 \begin{equation}\label{eq:0mod3}
    |Q(W',\overrightharp{U_1V_1})|+ 
    \sum_{i\in[r-1]} \big( |Q(W_i,\overrightharp{V_iU_i})|+f_i \big)
    + f_r
    \equiv 0 \mod 3 \,.
 \end{equation}
 This will enable us to construct cycles of lengths divisible by three later.

  In order to construct squared paths in~$G$ from short vertex sequences in~$R$
  we use the following fact.

  \begin{fact}\label{fac:pathlink} 
    Let $X_1,X_2,X_3$ be vertices of $R$ (not necessarily distinct), and~$Z$ be
    any set of at most $2\eps |X_1|$ vertices of $G$. Suppose that $X_1X_2$ and
    $X_1X_3$ are edges of $R$. Suppose furthermore that we have two vertices
    $u$ and $v$ of $G$ such that $u$ and $v$ have at least $(d-2\eps)^2|X_1|$
    common neighbours in $X_1$, and $v$ has at least $(d-2\eps)|X_2|$
    neighbours in $X_2$.
 
    Then there is a vertex $w\in X_1-Z$ adjacent to $u$ and $v$ such that
    $v$ and $w$ have at least $(d-2\eps)^2|X_2|$ common neighbours in $X_2$
    and $w$ has at least $(d-2\eps)|X_3|$ neighbours in $X_3$.
  \end{fact}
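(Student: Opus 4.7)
The plan is a standard regularity argument: identify a large set $N$ of vertices in $X_1$ that are common neighbours of $u$ and $v$, and then discard a small number of ``bad'' vertices that violate one of the two required degree conditions or lie in $Z$.

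First I would set $N := \Gamma(u)\cap\Gamma(v)\cap X_1$, which already has $|N|\ge (d-2\eps)^2|X_1|$ by hypothesis; every $w\in N$ is adjacent to both $u$ and $v$. For the condition towards $X_2$, let $V_2 := \Gamma(v)\cap X_2$, so $|V_2|\ge (d-2\eps)|X_2|\ge 2\eps|X_2|$, where the last inequality uses that $\eps<\eps_\subsc{EL}\le d^2/24$ (set in the proof of Lemma~\ref{lem:bl}), so $d-2\eps\ge 2\eps$. By the $(2\eps,d)$-regularity of $(X_1,X_2)$, the set
\[
B_1 := \bigl\{w\in X_1 : |N(w)\cap V_2| < (d-2\eps)|V_2|\bigr\}
\]
satisfies $|B_1|<2\eps|X_1|$; otherwise the pair $(B_1,V_2)$ would have density below $d-2\eps \le d(X_1,X_2) - 2\eps$, contradicting regularity. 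Analogously, $(2\eps,d)$-regularity of $(X_1,X_3)$ gives
\[
B_2 := \bigl\{w\in X_1 : |N(w)\cap X_3| < (d-2\eps)|X_3|\bigr\}, \qquad |B_2|<2\eps|X_1|.
\]

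Putting everything together, the candidate set $N\setminus(B_1\cup B_2\cup Z)$ has size at least $\bigl((d-2\eps)^2-6\eps\bigr)|X_1|$, which is strictly positive because $\eps<d^2/24$ forces $(d-2\eps)^2>6\eps$. Any vertex $w$ in this set is adjacent to both $u$ and $v$; it has at least $(d-2\eps)|V_2|\ge (d-2\eps)^2|X_2|$ common neighbours with $v$ in $X_2$; and it has at least $(d-2\eps)|X_3|$ neighbours in $X_3$. This is exactly what the fact requires. The whole argument is routine, and no serious obstacle is anticipated beyond the small-$\eps$ arithmetic verifying that the bad sets do not exhaust~$N$.
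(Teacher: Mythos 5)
Your proposal is correct and follows essentially the same route as the paper: take the common neighbourhood of $u,v$ in $X_1$ (the paper's $W$, your $N$), use $2\eps$-regularity of $(X_1,X_2)$ and $(X_1,X_3)$ to discard at most $2\eps|X_1|$ bad vertices for each condition, and observe that $6\eps<(d-2\eps)^2$ so a valid $w$ outside $Z$ survives. Your write-up simply makes explicit the regularity contradiction that the paper leaves implicit.
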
 
  \begin{factproof}[Proof of Fact~\ref{fac:pathlink}] 
    Let $W$ be the set of common neighbours of $u$ and $v$ in $X_1$. Since
    $X_1X_2\in E(R)$, at most $2\eps |X_1|$ vertices of $W$ have fewer than
    $(d-2\eps)|\Gamma(v)\cap X_2|\geq (d-2\eps)^2|X_2|$ common neighbours
    with $v$ in~$X_2$. Since $X_1X_3\in E(R)$ at most $2\eps |X_1|$
    vertices of $W$ have fewer than $(d-2\eps)$ neighbours in
    $X_3$. Finally since $6\eps |X_1|<(d-2\eps)^2|X_1|$ we can find a
    vertex of $W\setminus Z$ satisfying the desired properties.
  \end{factproof} 

  With these buiding bricks at hand we can now turn to the proofs of
  \ref{lem:bl:1}, \ref{lem:bl:2}, and \ref{lem:bl:3}.

  \smallskip

  {\sl Proof of~\ref{lem:bl:1}}, i.e.,
  $G$ contains $C^2_{3\ell}$ for each $3\ell\leq
  (1-d)\CTF(R)n/m$:
  When $\ell\leq (1-d)n/m$ we have $C^2_{3\ell}\subset
  K_{(1-d)n/m,(1-d)n/m,(1-d)n/m}$ and thus by Lemma~\ref{lem:KSSbl} we can
  find $C^2_{3\ell}$ as a subgraph of $G$ (whose vertices are in $T_1$,
  with no restrictions required).
  Otherwise we use the following strategy.
  Let $UV$ be the first edge of the triangle walk $W_1$.
 
  Our first goal will be to construct a squared path~$P'$ in~$G$ which
  `connects' $T_1$ to~$T_2$, $T_2$ to~$T_3$, and so on. For this purpose we
  choose two adjacent vertices $u$ and $v$ of $G$ in $U$ and $V$
  respectively, such that $u$ and $v$ have $(d-2\eps)^2n/m$ common
  neighbours in both the third vertex of $T_1$ and the third vertex of
  $Q(W',\overrightharp{UV})$, such that $v$ has $(d-2\eps)n/m$ neighbours
  in the fourth vertex of $Q(W',\overrightharp{UV})$, and such that $u$ has
  $(d-2\eps)n/m$ neighbours in $V$. This is possible by the regularity of
  the various pairs.  (Observe that the required sizes for the
  neighbourhoods and joint neighbourhoods are chosen large enough for an
  application of Lemma~\ref{lem:KSSbl} in the triangle~$T_1$.)  Now we
  apply Fact~\ref{fac:pathlink} with the vertices~$u$ and~$v$ and the
  third, fourth and fifth vertices of $Q(W',\overrightharp{UV})$ to obtain
  a third vertex $v'$ in the third vertex of $Q(W',\overrightharp{UV})$
  such that~$u$ and~$v$ are adjacent to~$v'$.  By repeatedly applying
  Fact~\ref{fac:pathlink} we construct a sequence of vertices $P'$
  (starting with $u,v,v'$), where the $i$th vertex of $P'$ is in the $i$th
  set of $Q(W',\overrightharp{UV})$ and is adjacent to its two
  predecessors, and where the vertices are all distinct (noting that
  $3|W'|<\eps n/m$). Thus~$P'$ is a squared path running from $T_1$ to
  $T_{r-1}$ following all the triangle walks~$W_i$.
 
 In addition we construct similarly (and without re-using vertices) for each $1\leq
  i\leq r-1$ a squared path $P_i$ following the triangle walk
  $W_i$. However, this time we use the opposite orientation for the first edge: that
  is, instead of constructing $P_1$ from $Q(W_1,\overrightharp{UV})$ we use
  $Q(W_1,\overrightharp{VU})$, and similarly for each $P_i$ we use the
  opposite orientation of the first edge of $W_i$ to that used in
  $P'$. Again, for each $P_i$ we insist that the first two vertices have
  suitable neighbourhoods in $T_i$, and the last two in $T_{i+1}$, for
  an application of Lemma~\ref{lem:KSSbl} in these triangles. Again, this is
  possible by regularity. 

  We note that the total number of vertices on all of these squared paths
  is not more than $6m\binom{m}{2}<\eps n/m$. Finally, we remove from $T_1$
  all vertices of $P=P'\cup P_1\cup\cdots\cup P_{r-1}$. Since at
  most $\eps n/m$ vertices are removed, and each cluster of $T_1$ has size at
  least $(1-3\eps)n/m$, even after removal all three pairs remain
  $(3\eps,d)$-regular and each cluster still has size at least
  $(1-4\eps)n/m$.

  Thus the conditions of Lemma~\ref{lem:KSSbl} are satisfied, and hence we
  may embed a squared path $S_1$ into $T_1$, with the four restrictions
  that its first vertex is a common neighbour of the first two vertices of
  $P'$, its second a neighbour of the first vertex of $P'$, its penultimate
  vertex a neighbour of the first vertex of $P_1$ and its final vertex a
  common neighbour of the first two vertices of $P_1$ (noting that by
  choice of the first two vertices of~$P'$ and of~$P_1$ the sets to which
  these vertices are restricted are indeed of size $cn/m$ because $c=
  d^2/4$).  In this way we can construct a squared path on $3\ell_1+f_1$
  vertices for any integer $\ell_1\in [10,(1-d)n/m]$ (since $3\cdot 4\eps<d$),
  where $f_1\in\{0,1,2\}$ is as defined above~\eqref{eq:0mod3}.
  Similarly we may apply Lemma~\ref{lem:KSSbl} to each
  $T_i$ ($2\leq i\leq r$), after removing~$P$ from~$T_i$, to obtain
  squared paths $S_i$ of length $3\ell_i+f_i$ for any integer $\ell_i\in
  [10,(1-d)n/m]$.

  Finally $S=P'\cup S_1\cup P_1\cup\ldots\cup P_{r-1}\cup
  S_{r}$ forms a squared cycle in~$G$.  It follows
  from~\eqref{eq:0mod3} that the length of~$S$ is divisible by three.
  We conclude that indeed $S=C^2_{3k}$, where we may choose any integer~$k$
  with $3k\in [6m^3,(1-d)\CTF(R)n/m]$, as required.
 
  \smallskip

  {\sl Proof of~\ref{lem:bl:2}:\ }
  When every triangle component of $R$ contains $K_4$ we also want to obtain
  squared cycles whose lengths are not divisible by three. Observe that if
  $ABCD$ is a copy of $K_4$ in $R$, then the vertex sequences $ABC$,
  $ABCDABC$ and $ABCDABCDABC$ each start and end with the same pair. Hence,
  with the help of Fact~\ref{fac:pathlink}, these sequences can be used to
  construct squared paths in~$G$ of length~$3$ (which is $0\!\mod 3$),
  length~$7$ ($1\!\!\mod 3$), and length $11$ ($2\!\mod 3$).

  We construct $C^2_\ell$ for $\ell\in[3,20]\setminus\{5\}$ within a copy
  of $K_4$ in $R$ directly (by the above methods). To obtain $C^2_\ell$
  with $21\leq\ell\leq 3(1-d)n/m$ we remove at most $2\eps n/m$ vertices
  from each of $A$, $B$ and $C$ to obtain a triangle satisfying the
  conditions of Lemma~\ref{lem:KSSbl}, construct a short path in $A,B,C,D$
  following the appropriate vertex sequence for $\ell\!\mod 3$ and apply
  Lemma~\ref{lem:KSSbl} to obtain $C^2_\ell$. Finally, to obtain longer
  squared cycles we perform the same construction as above, with the
  exception that $W'$ is any triangle walk to and from a copy of $K_4$, and
  so $Q(W',\overrightharp{UV})$ may be taken (using one of the three vertex
  sequences above) to have any desired number of vertices modulo three (and
  not more than $2m^2$ in total).

  \smallskip

  {\sl Proof of~\ref{lem:bl:3}:\ } 
  Lastly, when we are required to construct a squared path between two
  specified edges $u_1v_1$ (with $2dn/m$ common neighbours in both $X_1$
  and $Y_1$) and $u_2v_2$ (with $2dn/m$ common neighbours in both $X_2$ and
  $Y_2$) using triangles $T$ in $R$, we apply the identical strategy,
  noting that the conditions on $u_1v_1$ and $u_2v_2$ are already suitable
  for an application of Fact~\ref{fac:pathlink}.
\end{proof} 
\end{document}